\documentclass{amsart}
\usepackage{amsfonts}
\usepackage{amsmath}
\usepackage{amssymb}
\usepackage{subfig}
\usepackage{paralist}
\usepackage{graphics}
\usepackage{xcolor}
\usepackage{epsfig}
\usepackage[colorlinks=true]{hyperref}

\newcommand{\im}{\text{Im}}

\newcommand{\changetext}[1]{\color{black} #1 \color{black}}
\newcommand{\removetext}[1]{\color{green}  \color{black}}
\newcommand{\mres}{\mathbin{\vrule height 1.6ex depth 0pt width
0.13ex\vrule height 0.13ex depth 0pt width 1.3ex}}

\newcommand{\abs}[1]{\left|{#1}\right|}

\setcounter{MaxMatrixCols}{10}

\hypersetup{urlcolor=blue, citecolor=red}
  \textheight=8.2 true in
   \textwidth=5.0 true in
    \topmargin 30pt
\setcounter{page}{1}

\newcommand{\rect}{\underline{\underline{\tau}}}

\numberwithin{equation}{section}
\newtheorem{theorem}{Theorem}[section]
\newtheorem{corollary}[theorem]{Corollary}

\newtheorem{lemma}[theorem]{Lemma}
\newtheorem{proposition}[theorem]{Proposition}
\newtheorem*{problem}{Problem}
\theoremstyle{definition}
\newtheorem{definition}[theorem]{Definition}
\newtheorem{remark}[theorem]{Remark}

\subjclass[2010]{49Q20, 49Q10, 90B10}
\keywords{branch transportation, economies of scale, optimal
transport path, branching structure, rectifiable current, partial transportation.}
\email{qlxia@math.ucdavis.edu}
\email{xushaofeng@ruc.edu.cn}

\begin{document}
\title{Ramified optimal transportation with payoff on the boundary}
\author[Qinglan Xia and Shaofeng Xu]{}
\maketitle

\centerline{\scshape Qinglan Xia } \medskip {\footnotesize %
\centerline{Department of Mathematics}
\centerline{University of California
at Davis} \centerline{Davis, CA 95616, USA} }

\medskip

\centerline{\scshape Shaofeng Xu } \medskip {\footnotesize %
\centerline{School of Economics} \centerline{Renmin University of China} %
\centerline{Beijing, 100872, China} }

\begin{abstract}
This paper studies a variant of ramified/branched optimal transportation
problems. Given the distributions of production capacities and market sizes, a firm looks for an allocation of productions over factories, a distribution of sales across markets, and a transport path that delivers the product to maximize its profit. Mathematically, given any two measures $\mu$ and $\nu$ on $X$, and a payoff function $h$, the planner wants to minimize $\mathbf{M}_{\alpha }(T)-\int_{X}hd(\partial T)$
among all transport paths $T$ from $\tilde{\mu}$ to $\tilde{\nu}$ with $\tilde{\mu}\changetext{\leq} \mu $ and $\tilde{\nu}\changetext{\leq} \nu $, where $\mathbf{M}_{\alpha }$ is the standard cost functional used in ramified transportation. After proving the existence result, we provide a characterization of the boundary measures of the optimal solution. They turn out to be the original measures restricted on some Borel subsets up to a Delta mass on each connected component.
Our analysis further finds that as the boundary payoff increases, the corresponding solution of the current problem converges to an optimal transport path, which is the solution of the standard ramified transportation.
\end{abstract}

\section{Introduction}
\subsection{The ROTPB problem}

Transportation is an important force shaping the spatial distribution of
economic activities. Consider a firm that produces and sells a product in
various regions. Given the locations and capacities of these regions and the
associated production costs and sale prices of the product, the firm looks
for a distribution of productions over factories, a distribution of sales
across markets, and a transport path that delivers the product to maximize
its profit. The firm's optimal plan over productions and sales depends on
its choice of transport path, and vice versa. The interactions between
location and transport choices, however, often render these problems difficult to analyze.

In this paper, we address some of these interactions in the framework of the
ramified optimal transportation. More precisely, we consider the following
resource allocation problem:
Let $\mu $ and $\nu $ be two  \changetext{ Radon} measures on a convex compact subset $X$ of the Euclidean space $\mathbb{R}^m$, $\mathbf{M}_{\alpha }$ be the standard cost functional used in ramified transportation \cite{xia2015motivations} for $\alpha \in [0,1)$ and $%
h$ be a continuous function on the support of the signed measure $\nu -\mu $.  We consider the problem:

\begin{problem}[ROTPB($\protect\mu ,\protect\nu $)]
Minimize
\begin{equation}
\label{eqn: E}
\mathbf{E}_\alpha^h(T):=\mathbf{M}_{\alpha }(T)-\int_{X}hd(\partial T)
\end{equation}%
among all rectifiable 1-current $T$ with $\partial T\preceq \nu -\mu $ as
signed measures.\footnote{The notation $\preceq$ is introduced in (\ref{eqn: measure_preceq}).}
\end{problem}

In the context of the above example, measures $\mu $
and $\nu $ represent, respectively, the distributions of production
capacities and market sizes. The function $h$ represents the payoff associated with moving mass
from $\mu $ to $\nu $, and it captures the
production cost of the product over $\mu $ and its sale price over $\nu $.
The firm aims to maximize its profit defined as sale revenues minus costs
involved in transportation and production. We call this problem as \textit{%
Ramified optimal transportation with payoff on the boundary} (ROTPB).

\subsection{Background}
This paper is related to the literature of optimal transport problems which
concerns efficient mass transportation. These problems are studied early on
by Monge and Kantorovich, and has been extensively analyzed in recent years. Classical references can be found in the books \cite{villani, villani2} by Villani, \cite{santambrogio} by Santambrogio, and the user's guide \cite{guide} by Ambrosio and Gigli. Our paper is most closely related to the ramified optimal transportation (ROT) (also called branched transportation) literature, which models branching transport structures thanks to the efficiency in group transportation. In contrast to the Monge-Kantorovich problems where the transportation cost is solely determined by a transport map, the cost in ramified transport problems is determined by the actual transport path. The Eulerian formulation of the ROT problem is proposed by the first author in \cite{xia1}, with related motivations, set-up and applications surveyed in \cite{xia2015motivations}. An equivalent Lagrangian
formulation of the problem is established by Maddalena, Morel, and Solimini in \cite{msm}. One may refer to \cite{book} for detailed discussions of the research in this direction.  Some interesting recent developments on ROT can be found for instance in \cite{bw, colombo, colombo2020, mmst, psx, ben}.

Our paper differentiates itself
from the existing ROT literature in two main regards.
First, in the literature both measures $\mu $ and $\nu $ are fixed and of
equal mass, and the problem only involves finding a cost-minimizing
transport path. By contrast, the planner in this paper optimizes over all
possible combinations of\ $\left( \tilde{\mu},\tilde{\nu}\right) $ with $%
\tilde{\mu}\changetext{\leq} \mu $, $\tilde{\nu}\changetext{\leq} \nu $ and $||\tilde{\mu}||=||%
\tilde{\nu}||$. \changetext{Similar kind of optimal partial mass transport has been studied for instance by Caffarelli and McCann \cite{Caffarelli_Maccann} and also Figalli \cite{figalli} for the scenario of Monge-Kantorovich problems with a particular attention to the quadratic cost.} Second, the planner faces a reward for relocating mass at
the boundary, and thus the solution relies on the payoff function $h$. This
element has been absent in the literature up to our best knowledge.

\subsection{Main results} Our main results include three parts: the existence theorem (Theorem \ref{thm: existence}), the characterization theorems (Theorem \ref{thm: atomic}, Theorem \ref{main}), and the approximation theorem (Theorem \ref{thm: approximation}).

We first prove the existence of an $\mathbf{E}_\alpha^h$-minimizer $T^*$ for the ROTPB($\protect\mu ,\protect\nu $) problem in Theorem \ref{thm: existence}. This optimal solution $T^*$ is an $\alpha$-optimal transport path \changetext{of finite $\mathbf{M}_\alpha$ cost} from $\mu^*$ to $\nu^*$ for some measures $\mu^*\changetext{\leq} \mu$ and $\nu^*\changetext{\leq} \nu$. As such, $T^*$ automatically inherits many nice geometric properties of optimal transport paths as described previously in \cite{xia2015motivations}.
We next characterize the optimal allocation measures $\mu^*$ and $\nu^*$. In the \changetext{ finite}atomic case, we show

\begin{theorem}
\label{thm: atomic}Suppose $\mu$ and $\nu$ are two \changetext{ finite} atomic measures on $X$, $0<\alpha <1$,
and $T^*\in Path(\mu^*, \nu^*)$ is a solution to the ROTPB($\mu ,\nu $) problem. Let $\{K_k: k=1,2,\cdots, \ell\}$ be the set of the connected components of the support of $T^\ast$. Then, for each $k=1,2,\cdots, \ell$, it holds that
\begin{equation}
\label{eqn: decomposition_component}
\mu ^{\ast } \mres K_k=\mu \mres K_k-m_k\delta_{p_k} \text{ and } \nu ^{\ast } \mres K_k=\nu \mres K_k-n_k \delta_{q_k},
\end{equation}
for some points $p_k\in K_k \cap spt(\mu^\ast)$ and $ q_k\in K_k \cap spt(\nu^\ast)$ with
\[m_k:=\max\{\mu(K_k)-\nu(K_k), 0\} \text{ and } n_k:=\max\{\nu(K_k)-\mu(K_k), 0\}. \]
As a result, we have the decomposition
\begin{equation}
\mu ^{\ast }=\mu \mres A-\mathbf{a} \text{ and } \nu
^{\ast }=\nu \mres B-\mathbf{b},
\end{equation}
for $A=spt(\mu^*), B=spt(\nu^*)$, and
\begin{equation}
\label{eqn: a and b}
\mathbf{a}=\sum_{k=1}^\ell m_k \delta_{p_k},\ \mathbf{b}=\sum_{k=1}^\ell n_k \delta_{q_k}.
\end{equation}
\end{theorem}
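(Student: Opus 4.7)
The plan is to reduce the statement to a single connected component and then exploit the strict concavity of $t\mapsto t^{\alpha}$ for $\alpha\in(0,1)$. Since the $K_k$ are pairwise disjoint closed sets, $T^{\ast}$ decomposes as $T^{\ast}=\sum_{k}T^{\ast}\mres K_k$ with $\partial(T^{\ast}\mres K_k)=(\nu^{\ast}-\mu^{\ast})\mres K_k$, and the objective $\mathbf{E}_{\alpha}^{h}$ splits additively across components, so each restriction must itself be an $\mathbf{E}_{\alpha}^{h}$-minimizer for the corresponding local problem on $K_k$. Fix one such $K=K_k$ and evaluate $\partial T^{\ast}$ against the constant function $1$ to obtain the mass balance $\mu^{\ast}(K)=\nu^{\ast}(K)$. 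I treat the case $\mu(K)\geq\nu(K)$, so $m_k=\mu(K)-\nu(K)\geq 0$ and $n_k=0$ (the reverse case is symmetric).

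The heart of the argument is a concavity-of-cost principle. I would parametrize the feasible pairs $(\tilde\mu,\tilde\nu)\leq(\mu,\nu)$ supported on the atoms inside $K$ by the weight vector $(a_i^{\ast})_{x_i\in K}\cup(b_j^{\ast})_{y_j\in K}$: the feasible set $\mathcal{P}$ is the convex polytope cut out by the box constraints $a_i^{\ast}\in[0,a_i]$, $b_j^{\ast}\in[0,b_j]$ together with the mass-balance hyperplane $\sum_i a_i^{\ast}=\sum_j b_j^{\ast}$. For any fixed transport topology, edge flows are linear functions of the boundary weights (by flow conservation on the tree structure of an $\alpha$-optimal path), so the cost $\sum_{e}(\mathrm{flow}_e)^{\alpha}|e|$ is strictly concave in these weights; taking the minimum over topologies preserves concavity because the pointwise minimum of concave functions is concave. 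The payoff $\int_X h\, d(\nu^{\ast}-\mu^{\ast})$ is linear in the weights, so the total objective is concave on $\mathcal{P}$ and must attain its minimum at a vertex. A vertex of $\mathcal{P}$ has all but one coordinate at a box endpoint ($0$ or the maximum), with the one remaining coordinate pinned by mass balance.

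The last step is to pick out the specific vertex described in (1.3), namely the one with $\nu^{\ast}=\nu\mres K$ and $a_i^{\ast}=a_i$ for every $x_i\in K$ except a single exceptional atom $p_k$ whose weight is then forced to $a_{p_k}-m_k$ by mass balance. The key observation is that $K\subseteq \mathrm{spt}(T^{\ast})$ forces every atom of $\mu$ or $\nu$ lying in $K$ to sit on the transport network of $T^{\ast}$: so any competing vertex with a dormant atom ($a_i^{\ast}=0$ or $b_j^{\ast}=0$ for some $x_i$ or $y_j$ in $K$) could be strictly improved by a small activating perturbation that only reroutes flows along existing edges (creating no new branch), compensated by a small decrease at some already-active atom. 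Applying the concavity principle to this activation parameter rules out those alternative vertices. The hardest step I expect is making this activation argument fully rigorous, particularly when the dormant atom happens to coincide with a Steiner branching point of $T^{\ast}$, where the local flow-reconfiguration geometry must be handled with care.
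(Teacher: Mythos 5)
Your reduction to a single component and the mass-balance bookkeeping at the end match the paper, but the central step of your plan --- concavity of the objective in the boundary weights over the whole polytope $\mathcal{P}$, hence a minimum at a vertex --- does not hold. For a \emph{fixed} topology the edge flows are indeed linear in the boundary weights, but as the weights range over $\mathcal{P}$ these flows change sign or become infeasible, and $t\mapsto |t|^{\alpha}$ is not concave across $0$; so each topology-wise cost is concave only on a sub-region, and ``min of concave functions'' does not apply. In fact the value function $w\mapsto d_{\alpha}(\tilde\mu_w,\tilde\nu_w)$ is genuinely non-concave: take sources $x_1,x_2$ and sinks $y_1,y_2$ with $|x_1-y_1|=|x_2-y_2|=1$ and the two pairs at huge distance $D$; the weight vectors $(1,0;0,1)$ and $(0,1;1,0)$ each cost about $D$, while their midpoint $(\tfrac12,\tfrac12;\tfrac12,\tfrac12)$ costs about $2^{1-\alpha}$, violating concavity. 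So ``the minimum is attained at a vertex of $\mathcal{P}$'' is not justified by your argument. The second gap is the step you yourself flag as hardest: ruling out the unwanted vertices. As envisioned (activate a dormant atom on $K$, compensate at an \emph{already-active} atom, claim strict improvement) it is not correct either --- whether such a swap lowers the cost is a first-order, geometry- and $h$-dependent question, and concavity alone only says the minimum over that segment sits at an endpoint, which may well be the dormant vertex.

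What actually closes both gaps in the paper is a \emph{local, two-sided} version of your concavity idea. If two atoms $p_1,p_2$ lying on the same component are both \emph{deficient} (i.e.\ $(\mu-\mu^*)\{p_i\}>0$, which includes dormant atoms), then, since $T^*$ is acyclic, one reroutes along the unique curve $\gamma$ in $spt(T^*)$ joining them: $S=\sigma I_{\gamma}$ with $\sigma$ small is ``on'' $T^*$ and $\partial(T^*\pm S)\preceq \nu-\mu$ precisely because \emph{both} endpoints have slack; then $g(t)=\mathbf{E}(T^*+tS)$ satisfies $g''(0)=\alpha(\alpha-1)\int_W\varphi^{\alpha-2}\rho^2\,d\mathcal{H}^1<0$, contradicting minimality at $t=0$ (Theorem \ref{thm: S is zero} and Proposition \ref{prop: set P}). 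Note the conclusion one needs is not ``no dormant atoms'' but ``at most one deficient atom of $\mu\mres K_k$ and $\nu\mres K_k$ per component''; combined with $\mu^*(K_k)=\nu^*(K_k)$ this forces (\ref{eqn: decomposition_component}) with $m_k,n_k$ as stated. So the sound kernel of your proposal is the strict concavity of $\theta\mapsto\theta^{\alpha}$, but it must be applied directionally along the existing network (a second-variation argument comparing two deficient atoms), not globally on the weight polytope.
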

Note that in equation (\ref{eqn: decomposition_component}), at least one of $m_k$ and $n_k$ is zero for each $k$. The equation says that on each connected component $K_k$, all existing resources in the optimal allocation source measure $\mu^*$ will be used up, and all demands in the optimal allocation destination measure $\nu^*$ will be met with at most one exception at either a source node or a destination node. There are three scenarios:
\begin{itemize}
    \item In the balanced case where $\mu(K_k)=\nu(K_k)$, then
    \[\mu^*\mres K_k=\mu\mres K_k \text{ and } \nu^*\mres K_k=\nu\mres K_k.\]
    All source and destination nodes are fully in use.
    \item In the over-supply case where  $\mu(K_k)>\nu(K_k)$, then
        \[\mu^*\mres K_k=\mu\mres K_k -(\mu(K_k)-\nu(K_k))\delta_{p_k}\text{ and } \nu^*\mres K_k=\nu\mres K_k.\]
        All source nodes excluding the one at $p_k$ and all destination nodes are fully in use.
 \item In the over-demand case where  $\mu(K_k)<\nu(K_k)$, then
        \[\mu^*\mres K_k=\mu\mres K_k \text{ and } \nu^*\mres K_k=\nu\mres K_k-(\nu(K_k)-\mu(K_k))\delta_{q_k}.\]
        All source nodes and all destination nodes except for the one at $q_k$ are fully in use.
\end{itemize}

In Theorem \ref{main}, we further extend the results of Theorem \ref{thm: atomic} to general cases. 

The third part of the main results highlights an important implication of the current study for solving an optimal transport path. We consider a version of ROTPB problems, where \changetext{the measures $\mu$ and $\nu$ are disjointly supported and} the payoff function $h_c$ takes a constant
value $2c$ on the support of $\nu $, and vanishes on the support of $\mu$. In the early example, the parameter $c$ represents (half of) the gap between the sale price and the production cost, and it effectively determines the payoff from
relocating a unit of mass.  Intuitively, the larger the payoff, the more incentive the planner
has to relocate mass from sources to destinations. When the payoff is sufficiently large, it is in the best interest of the planner to move as much mass as possible. We prove in Theorem \ref{thm: approximation} that an optimal transport path, which solves the standard ramified transportation problem, can be obtained as a limit of the solutions to a sequence of ROTPB problems associated with a series of increasing boundary payoff. This finding thus provides a novel perspective for approximating an optimal transport path.

\section{Preliminaries}
\subsection{Basic notations in geometric measure theory}
We first recall some terminology about rectifiable currents as in \cite%
{fed} or \cite{simon}.

Let $\Omega \subseteq \mathbb{R}^{m}$ be an open domain and for any integer $k\ge 0$ \changetext{let} $\mathcal{D}%
^{k}(\Omega )$ be the set of all $C^{\infty }$ differential $k$-forms in $%
\Omega $ with compact support with the usual Fr\'{e}chet topology \cite{fed}%
. A $k$\textit{-dimensional current }$S$\textit{\ in }$\Omega $\textit{\ }%
is a continuous linear functional on $\mathcal{D}^{k}(\Omega )$. Denote $%
\mathcal{D}_{k}(\Omega )$ as the set of all $k$-dimensional currents in $%
\Omega $. The \textit{mass} of a current $T\in \mathcal{D}_{k}(\Omega )$ is defined by
\[\mathbf{M}(T):=\sup\{\abs{T(\omega)}: ||\omega||\le 1, \omega\in \mathcal{D}^{k}(\Omega )\}.\] Motivated by the Stokes' theorem, the \textit{boundary} of a current $%
S\in \mathcal{D}_{k}(\Omega )$ for $k\ge 1$ is the current $\partial S$ in $\mathcal{D}_{k-1}(\Omega )$ defined by
\begin{equation*}
\partial S\left( \omega \right) :=S\left( d\omega \right)
\end{equation*}%
for any $\omega\in \mathcal{D}^{k-1}(\Omega )$. A current $T\in \mathcal{D}_{k}(\Omega )$ is called \textit{normal} if $\mathbf{M}(T)+\mathbf{M}(\partial T)<+\infty$. A sequence of currents $\left\{S_{i}\right\}$ in $\mathcal{D}_{k}(\Omega )$ is said
to be weakly convergent to another current $S\in \mathcal{D}_{k}(\Omega )$,
denoted by $S_{i}\rightharpoonup S$, if
\begin{equation*}
S_{i}(\omega )\rightarrow S(\omega )
\end{equation*}%
for any $\omega \in \mathcal{D}^{k}(\Omega )$.

As in \cite{simon}, a subset $M\subseteq \mathbb{R}^{m}$ is called
(countably) $k-$\textit{rectifiable} if $M=\bigcup\limits_{i=0}^{\infty
}M_{i}$, where $\mathcal{H}^{k}\left( M_{0}\right) =0$ under the $k-$%
dimensional Hausdorff measure $\mathcal{H}^{k}$ and each $M_{i}$, for $%
i=1,2,\cdots ,$ is a subset of an $k-$dimensional $C^{1}$ submanifold in $%
\mathbb{R}^{m}$. A \textit{rectifiable k-current} $S$ is a $k-$dimensional current coming from
an oriented $k-$rectifiable set with multiplicities. More precisely, $S\in
\mathcal{D}_{k}(\Omega )$ is a \textit{rectifiable k-current }if it can be
expressed as
\begin{equation*}
S\left( \omega \right) =\int_{M} \langle\omega \left( x\right) ,\xi \left( x\right)
\rangle \theta\left(x\right) d\mathcal{H}^{k}\left( x\right) ,\mathit{\ }\forall
\omega \in \mathcal{D}^{k}(\Omega )
\end{equation*}%
where

\begin{itemize}
\item $M$ is an $\mathcal{H}^{k}$ measurable and $k-$rectifiable subset of $%
\Omega $.

\item $\theta $ is an $\mathcal{H}^{k}\mres M$ integrable positive
function and is called the multiplicity function of $S$.

\item $\xi :M\rightarrow \Lambda _{k}\left( \mathbb{R}^{m}\right) $ is an $%
\mathcal{H}^{k}$ measurable unit tangent vector field on $M$ and is called the
orientation of $S$.
\end{itemize}

The rectifiable current $S$ described as above is often denoted by
\begin{equation*}
S=\underset{=}{\tau }(M,\theta ,\xi ).
\end{equation*}%
In this case, the mass of $S$ is expressed as
\[\mathbf{M}(S)=\int_M \theta(x)d\mathcal{H}^k(x).\]
\changetext{Since $\theta$ is $\mathcal{H}^{k}\mres M$ integrable,
each rectifiable current $S$ here is assumed to have finite mass.}
\subsection{Basic notations in ramified optimal transportation}
Let $X$ be a convex compact subset of the Euclidean space $\mathbb{R}^m$.
The ramified optimal transport problem (also called branched optimal transportation problem in the literature) considers the following  Plateau-type problem:
\begin{problem}[ROT]
Given two (positive) measures $\mu^+$ and $\mu^-$ on $X$ of equal mass and $\alpha< 1$, minimize
\begin{equation*}
\mathbf{M}_{\alpha}(T):=\int_M \theta^\alpha d \mathcal{H}^1.
\end{equation*}
among all rectifiable 1-current $T=\underline{\underline{\tau}}(M, \theta, \xi)$ in $\mathbb{R}^m$ with $\partial T=\mu^- - \mu^+$ in the sense of distributions.
\end{problem}
Each rectifiable 1-current $T=\underline{\underline{\tau}}(M, \theta, \xi)$ such that $\partial T=\mu^- - \mu^+$ is called a {\it transport path} from $\mu^+$ to $\mu^-$. Let
\[Path(\mu^+,\mu^-)=\{T \text{ is a rectifiable 1-current}: \partial T=\mu^- - \mu^+\}\]
be the collection of all transport paths from $\mu^+$ to $\mu^-$.

For the ROT problem, the existence of
an $\mathbf{M}_{\alpha}$-minimizer in $Path(\mu^+, \mu^-)$ is shown in \cite%
{xia1}. Each $\mathbf{M}_{\alpha}$-minimizer is called an \textit{$\alpha$-optimal transport path}.  \changetext{One shall note that for some combinations of exponent $\alpha$ and pair of measures $\mu^{\pm}$, it is possible the $\mathbf{M}_\alpha$ cost of any transport path $T\in Path(\mu^+, \mu^-)$ is infinite, and thus the existence of a solution to the ROT problem is trivial in that case.} 

\changetext{
When $1-\frac{1}{m}<\alpha<1$, it is shown in \cite{xia1} that for any pair of measures $\mu^{\pm}$ of equal (finite) mass there exists an $\alpha$-optimal
transport path of finite $\mathbf{M}_{\alpha}$-cost from $\mu^+$ to $\mu^-$. Moreover}, 
a distance is defined by setting
\begin{equation}  \label{d_a distance}
d_{\alpha}(\mu^+, \mu^-):=\min\{\mathbf{M}_{\alpha}(T): \partial T=\mu^- -
\mu^+ \}
\end{equation}
between $\mu^+$ and $\mu^-$. By \cite[Theorem 3.1]{xia1}, it holds that
\begin{equation}
\label{eqn: d_alpha_upperbound}
    d_{\alpha}(\mu^+,\mu^-)\le C_{m,\alpha}  diam(X)||\mu^+||^\alpha,
\end{equation}
where the constant
\begin{equation}
    \label{eqn: C_m_alpha}
    C_{m,\alpha}=\frac{\sqrt{m}}{2(2^{1-m(1-\alpha)}-1)}.
\end{equation}

\changetext{In general, the existence of finite cost $\alpha$-optimal
transport path between $\mu^+$ and $\mu^-$ depends on the dimensional information of the measures (see \cite{ds}, \cite{xia_dimension}).  In \cite{xia_dimension}, the $d_{\alpha}$-metric is defined on the space of finite atomic probability measures for any real number $\alpha<1$.} 

The following notations are also employed in the analysis:

\begin{itemize}
\item Let $\mu$ and $\nu$ be two (positive) measures on $X$.
We say $ \mu \changetext{\leq} \nu$ if $\nu-\mu$ is still a (positive) measure on $X$. In this case, we say that $\mu$ is \textit{feasible} relative to $\nu$.

\item Let $\mu_1=\mu_1^+-\mu_1^-$ and $\mu_2=\mu_2^+-\mu_2^-$ be the Jordan
decompositions of two signed measures. We say
\changetext{
\begin{equation}
\label{eqn: measure_preceq}
    \mu_1 \preceq \mu_2
\end{equation}}
if $%
\mu_1^+ \changetext{\leq} \mu_2^+$ and $\mu_1^-\changetext{\leq} \mu_2^-$.

\item \changetext{For any signed measure $\bar{\mu}$, let $spt(\bar{\mu})$ denote its support, $||\bar{\mu}||$ denote its total variation, and $|\bar{\mu}|$ denote its total variation measure.}
\end{itemize}

For each rectifiable 1-current $T$, its boundary $\partial T$ can be viewed
as a signed measure.

\section{The ROTPB problem}

This section analyzes the spatial resource allocation problem ROTPB($\protect\mu ,\protect\nu $) as stated in Introduction. For simplicity, when both the parameter $\alpha$ and the function $h$ are clear from the context, we simply write $\mathbf{E}_\alpha^h$ given in (\ref{eqn: E}) as $\mathbf{E}$.

The ROTPB($\protect\mu ,\protect\nu $) problem is indeed a double-minimizing problem
\begin{equation*}
\min \left\{ \min \left\{ \mathbf{E}_\alpha^h(T):\partial T=\tilde{\nu}-\tilde{\mu}\right\} :\ \tilde{\mu}\changetext{\leq} \mu ,%
\tilde{\nu}\changetext{\leq} \nu \text{ with }||\tilde{\mu}||=||\tilde{\nu}||\right\} .
\end{equation*}%
For each fixed $\tilde{\mu}\changetext{\leq} \mu , \tilde{\nu}\changetext{\leq} \nu$ with $||\tilde{\mu}||=||\tilde{\nu}||$,
the inner minimization problem
\begin{equation*}
 \min \left\{\mathbf{E}_\alpha^h(T)= \mathbf{M}_{\alpha }(T)-\int_{X}hd(\partial
T):\partial T=\tilde{\nu}-\tilde{\mu}\right\}
\end{equation*}
can be re-written as
\begin{equation}
\label{eqn: inner_minimizing}
 \min \left\{ \mathbf{M}_{\alpha }(T):\ \partial T=\tilde{\nu}-\tilde{\mu}\right\}-\int_{X}hd\tilde{\nu}+\int_{X}hd\tilde{\mu}.
\end{equation}
Thus, under the $%
d_{\alpha }$ metric as given in (\ref{d_a distance}), the \changetext{ ROTPB($\mu ,\nu$%
)} problem can also be expressed as: Minimize
\begin{equation*}
\mathbf{E}(\tilde{\mu},\tilde{\nu}):=d_{\alpha }(\tilde{\mu},\tilde{\nu}%
)-\int_{X}hd\tilde{\nu}+\int_{X}hd\tilde{\mu}
\end{equation*}%
among all feasible measures $\tilde{\mu}\changetext{\leq} \mu $ and $\tilde{\nu}\changetext{\leq} \nu $ with $||%
\tilde{\mu}||=||\tilde{\nu}||$.

From the perspective of the firm in the example given in Introduction, the ROTPB($\mu ,\nu $) problem can be interpreted as follows. Given the distributions
of production capacities ($\mu $) and market sizes ($\nu $), the firm
chooses an operation plan $\tilde{\mu}\changetext{\leq} \mu $ and $\tilde{\nu}\changetext{\leq}
\nu $ to minimize the total costs incurred in production ($\int_{X}hd\tilde{%
\mu}$) and transportation ($d_{\alpha }(\tilde{\mu},\tilde{\nu})$) net the
sale revenue ($\int_{X}hd\tilde{\nu}$).

We now state the existence theorem for the ROTPB($\mu, \nu$) problem.

\begin{theorem}[Existence]\label{thm: existence}
Let $\mu $ and $\nu $ be two \changetext{Radon} measures on $X$, 
$0\le\alpha<1$ and $h$ be a continuous function on the support of the signed measure $\nu
-\mu $.
\changetext{Then there exists a rectifiable 1-current $T^{\ast }$ of finite $\mathbf{M}_\alpha$ cost} that minimizes
\begin{equation*}
\mathbf{E}_\alpha^h(T):=\mathbf{M}_{\alpha }(T)-\int_{X}hd(\partial T)
\end{equation*}%
among all rectifiable 1-current $T$ with $\partial T\preceq \nu -\mu $ as
signed measures.
\end{theorem}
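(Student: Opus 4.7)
The plan is a standard direct-method argument adapted to accommodate the partial-mass constraint $\partial T\preceq\nu-\mu$. First I would check that the infimum of $\mathbf{E}_\alpha^h$ is finite. The zero current is admissible and gives $\mathbf{E}_\alpha^h(0)=0$, so the infimum is at most $0$. For a lower bound, any admissible $T$ satisfies $\mathbf{M}(\partial T)\leq\mathbf{M}((\nu-\mu)^+)+\mathbf{M}((\nu-\mu)^-)\leq\|\mu\|+\|\nu\|$, and since $h$ extends continuously to the compact set $X$ (by Tietze) with some bound $\|h\|_\infty$, we get $\bigl|\int_X h\,d(\partial T)\bigr|\leq\|h\|_\infty(\|\mu\|+\|\nu\|)$. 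Combined with $\mathbf{M}_\alpha(T)\geq 0$, this bounds $\mathbf{E}_\alpha^h$ from below. Now pick a minimizing sequence $\{T_n\}$; since the boundary integrals are uniformly bounded and $\mathbf{E}_\alpha^h(T_n)$ is bounded, we deduce $\sup_n\mathbf{M}_\alpha(T_n)<\infty$.

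Next I would apply the standard compactness theorem for rectifiable $1$-currents with uniformly bounded $\mathbf{M}_\alpha$-cost and uniformly bounded boundary mass, all supported inside the compact set $X$ (as proved in \cite{xia1,book}). This yields a subsequence, still denoted $\{T_n\}$, converging weakly to some rectifiable $1$-current $T^\ast$, and the lower semicontinuity of $\mathbf{M}_\alpha$ gives $\mathbf{M}_\alpha(T^\ast)\leq\liminf_n\mathbf{M}_\alpha(T_n)$. Because the boundary operator is continuous under weak convergence, $\partial T_n\rightharpoonup\partial T^\ast$ as currents (equivalently as signed Radon measures on $X$). Since $h$ is continuous on $X$, $\int_X h\,d(\partial T_n)\to\int_X h\,d(\partial T^\ast)$, so $\mathbf{E}_\alpha^h(T^\ast)\leq\liminf_n\mathbf{E}_\alpha^h(T_n)=\inf\mathbf{E}_\alpha^h$.

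The step I expect to be the main obstacle is verifying that the constraint $\partial T^\ast\preceq\nu-\mu$ survives passage to the weak-$*$ limit, because the operation of taking positive and negative parts (Jordan decomposition) is not continuous under weak convergence. The way I would handle this is to exploit that the constraint $\partial T_n\preceq\nu-\mu$ already provides dominating measures: the Jordan parts satisfy $(\partial T_n)^+\leq(\nu-\mu)^+$ and $(\partial T_n)^-\leq(\nu-\mu)^-$. Since the set of positive Radon measures dominated by a fixed finite Radon measure on a compact set is sequentially weak-$*$ compact, after extracting further subsequences we get $(\partial T_n)^{\pm}\rightharpoonup\sigma^{\pm}$ with $\sigma^{\pm}\leq(\nu-\mu)^{\pm}$. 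Crucially, $(\nu-\mu)^+$ and $(\nu-\mu)^-$ are mutually singular, so the dominated measures $\sigma^+$ and $\sigma^-$ are mutually singular as well; hence $\sigma^+-\sigma^-$ is already in Jordan form. Taking the difference and using uniqueness of weak limits, $\partial T^\ast=\sigma^+-\sigma^-$ with $(\partial T^\ast)^\pm=\sigma^\pm$, which gives exactly $\partial T^\ast\preceq\nu-\mu$.

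Combining lower semicontinuity of $\mathbf{M}_\alpha$, weak-$*$ continuity of $\int h\,d(\partial T_n)$, and admissibility of $T^\ast$, we conclude that $T^\ast$ attains the infimum and has finite $\mathbf{M}_\alpha$-cost (since the minimizing sequence did). This produces the desired minimizer and completes the existence proof; the later structural results (Theorem \ref{thm: atomic}, Theorem \ref{main}) will then refine the description of $\partial T^\ast$.
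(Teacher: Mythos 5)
Your overall outline (direct method, lower semicontinuity of $\mathbf{M}_\alpha$, continuity of the boundary term, passing the constraint $\partial T^\ast\preceq\nu-\mu$ to the limit) matches the paper, and your treatment of the last point via dominating measures and mutual singularity of $(\nu-\mu)^{\pm}$ is fine --- indeed more detailed than the paper's one-line claim. The genuine gap is the compactness step. There is no ``standard compactness theorem'' for rectifiable $1$-currents under a uniform bound on $\mathbf{M}_\alpha$ and on boundary mass alone, and such a statement is in fact false in the range of exponents allowed here ($0\le\alpha<1$). The problem is that $\mathbf{M}_\alpha$ does not control the mass $\mathbf{M}$: a cycle of multiplicity $N$ and radius $r$ has $\mathbf{M}_\alpha=2\pi N^\alpha r$, $\partial=0$, and $\mathbf{M}=2\pi Nr$, and for $\alpha<\tfrac12$ one can choose $N^{-1/2}\ll r\ll N^{-\alpha}$ so that $\mathbf{M}_\alpha\to 0$ while the flat norm (bounded below by testing against a $1$-form $\omega$ with $d\omega=dx\wedge dy$ near the center, giving $\gtrsim N r^2$) blows up. Adding such cycles to a genuine minimizing sequence changes $\mathbf{E}_\alpha^h$ by $o(1)$ (the boundary term is untouched), so a minimizing sequence need not have any weakly or flat convergent subsequence; moreover, even when a limit exists, its rectifiability requires finite mass together with finite $\mathbf{M}_\alpha$ (the rectifiability theorem), which you have not secured.

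The missing idea, which is exactly where the paper does its work, is a normalization that yields a uniform bound on $\mathbf{M}(T_n)$: since each $T_n$ has finite $\mathbf{M}_\alpha$, replace it by an $\mathbf{M}_\alpha$-minimizer with the same boundary (this does not increase $\mathbf{E}_\alpha^h$, as the boundary term is unchanged), so that one may assume each $T_n$ is an $\alpha$-optimal transport path. For such paths the multiplicity is bounded by half the boundary mass, whence Lemma \ref{lemma: mass_bound} gives $\mathbf{M}(T_n)\le\left(\tfrac{\mathbf{M}(\partial T_n)}{2}\right)^{1-\alpha}\mathbf{M}_\alpha(T_n)$, uniformly bounded by your estimates. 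Only then does Federer's compactness theorem for normal currents with equibounded mass and boundary mass apply, giving a flat-convergent subsequence whose limit is rectifiable (finite mass plus finite $\mathbf{M}_\alpha$), after which your lower semicontinuity, boundary-term, and constraint arguments go through as written.
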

\changetext{
\begin{remark}
By the Jordan decomposition theorem, for any signed measure $\bar{\mu}$, there exists a unique positive measures $\mu^+$ and $\mu^-$ such that $\bar{\mu}=\mu^+-\mu^-$ and $\mu^+\perp \mu^-$. Thus, without loss of generality, we may assume that $\mu$ and $\nu$  are mutually singular when studying the ROTPB($\mu, \nu$) problem.
\end{remark}}
\begin{proof}
We prove this result by using the direct method of calculus of variations. Let $%
\{T_{i}\}$ be any $\mathbf{E}$-minimizing sequence of rectifiable
1-currents. That is,
\begin{equation*}
\lim_{i\rightarrow \infty }\mathbf{E}(T_{i})=\inf \{\mathbf{E}(T):\partial
T\preceq \nu -\mu \},
\end{equation*}%
and $\partial T_{i}\preceq \nu -\mu $ for each $i$.
\changetext{With no loss of generality, we may assume $\mathbf{E}(T_{i})\leq \mathbf{E}(0)=0$.}
Thus,
\begin{equation}
\label{eqn: M_alpha_bound}
\mathbf{M}_{\alpha }(T_{i})=\mathbf{E}(T_{i})+\int_{X}hd(\partial T_{i})\leq
\int_{X}hd(\partial T_{i})\leq \int_{X}|h|d(|\nu-\mu|)<\infty
\end{equation}%
\changetext{
as $h$ is continuous on the compact set $spt(\nu-\mu)$, the support of $\nu-\mu$.
Now suppose $T_{i}\in Path(\tilde{\mu}_i,\tilde{\nu}_i)$. Since $\mathbf{M}_{\alpha }(T_{i})$ is finite, there exists an $\mathbf{M}_{\alpha }$-minimizer with finite cost for the minimization problem
\[\min \left\{ \mathbf{M}_{\alpha }(T):\ \partial T=\partial T_i\right\}.\]
Note this minimizer is also an $\mathbf{E}$-minimizer for the inner minimization problem
(\ref{eqn: inner_minimizing}) with $\tilde{\mu}=\tilde{\mu}_i \text{ and } \tilde{\nu}=\tilde{\nu}_i$. Without loss of generality, we may assume that $T_i$ is such a minimizer, which is an $\alpha$-optimal transport path of finite cost.
By (\ref{eqn: M_alpha_bound}), the sequence $\{\mathbf{M}_{\alpha }(T_{i})\}$ is bounded.
Employing Lemma \ref{lemma: mass_bound} below shows that the sequence $\{\mathbf{M}(T_{i})\}$
is also bounded. As a result, we get a sequence of normal 1-currents $\{T_i\}$ with equi-bounded mass and boundary mass. By the compactness of normal 1-currents (\cite{fed}), and taking a subsequence if necessary, we may assume that the sequence $\{T_{i}\}$ converges to a normal 1-current $T^{\ast }$ with respect to flat convergence. Since $\mathbf{M}_{\alpha }$ is lower
semi-continuous with respect to flat convergence (\cite{crms, mw}), we have
\begin{equation*}
\mathbf{M}_{\alpha }(T^{\ast })\leq \liminf_{i\rightarrow \infty }\mathbf{M}%
_{\alpha }(T_{i})<\infty.
\end{equation*}
According to the rectifiability theorem (e.g., Theorem 2.7 in \cite{xia2}), finite mass and finite $\mathbf{M}_\alpha$ mass together imply that $T^*$ is also 1-rectifiable.  Since $\{T_{i}\}$ converges to $T^{\ast }$ in flat convergence, the sequence $\{\partial T_{i}\}$ is weak-* convergent to $\partial T^{\ast} $ as signed measures.
}

Since $h$ is continuous on the support $spt(\nu -\mu )$, $spt(\partial
T_{i})\subseteq spt(\nu -\mu )$, and $\partial T_{i}$ is weak-$*$ convergent
to $\partial T^{\ast }$, we have
\begin{equation*}
\int_{X}hd(\partial T)=\lim_{i\rightarrow \infty }\int_{X}hd(\partial T_{i}).
\end{equation*}%
As a result,
\begin{equation*}
\mathbf{E}(T^{\ast })=\mathbf{M}_{\alpha }(T^{\ast })-\int_{X}hd(\partial
T^{\ast })\leq \liminf_{i\rightarrow \infty }\{\mathbf{M}_{\alpha
}(T_{i})-\int_{X}hd(\partial T_{i})\}=\lim_{i\rightarrow \infty }\mathbf{E}%
(T_{i}).
\end{equation*}%
When each $\partial T_{i}\preceq \nu -\mu $, its limit $\partial T^{\ast
}\preceq \nu -\mu $  holds as well. This shows that $T^{\ast }$ is a solution to
the ROTPB($\mu ,\nu $) problem.
\end{proof}

The proof of the theorem takes advantage of the following lemma:

\begin{lemma}
\label{lemma: mass_bound} Suppose $T$ is an $\alpha$-optimal transport path \changetext{with $\mathbf{M}_\alpha(T)<\infty$},
then
\begin{equation}  \label{eq: mass_bound}
\mathbf{M}(T)\le \left(\frac{\mathbf{M}(\partial T)}{2}\right)^{1-\alpha}
\mathbf{M}_{\alpha}(T).
\end{equation}
\end{lemma}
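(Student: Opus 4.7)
The natural plan is a two-step Hölder-type splitting. Writing $T=\underline{\underline{\tau}}(M,\theta,\xi)$ and factoring $\theta=\theta^{\alpha}\cdot\theta^{1-\alpha}$, one has immediately
\[
\mathbf{M}(T)=\int_M\theta\,d\mathcal{H}^1=\int_M\theta^{\alpha}\,\theta^{1-\alpha}\,d\mathcal{H}^1\le\bigl(\mathrm{ess\,sup}_{M}\theta\bigr)^{1-\alpha}\,\mathbf{M}_{\alpha}(T).
\]
Thus the entire lemma is reduced to the pointwise multiplicity estimate
\[
\theta(x)\le\tfrac{1}{2}\mathbf{M}(\partial T)\qquad\text{for }\mathcal{H}^{1}\text{-a.e.\ }x\in M,
\]
since substituting it into the split immediately produces the claimed inequality.

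Writing $T\in Path(\mu^{+},\mu^{-})$ with $\partial T=\mu^{-}-\mu^{+}$ and $\|\mu^{+}\|=\|\mu^{-}\|$, we have $\mathbf{M}(\partial T)=2\|\mu^{+}\|$, so the target becomes $\theta(x)\le\|\mu^{+}\|$. The main obstacle is establishing this uniform bound, and it is precisely where the $\alpha$-optimality of $T$ is essential: nothing in the definition of a rectifiable current by itself prevents large multiplicities. I would use the Lagrangian / curve-decomposition picture of optimal transport paths developed in the ramified transport literature (as in \cite{msm} and the references in \cite{book}): an $\alpha$-optimal $T$ of finite $\mathbf{M}_{\alpha}$-cost can be written as a superposition of oriented Lipschitz curves from $spt(\mu^{+})$ to $spt(\mu^{-})$ weighted by a nonnegative finite measure $\eta$ of total mass $\|\mu^{+}\|$ on the space of curves. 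At $\mathcal{H}^{1}$-almost every point $x\in M$, the multiplicity $\theta(x)$ coincides with the $\eta$-measure of the set of curves passing through $x$, which is bounded above by $\eta(\Gamma)=\|\mu^{+}\|=\tfrac12\mathbf{M}(\partial T)$, as desired.

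Combining the multiplicity bound with the Hölder-type split completes the argument. The one technical subtlety to address is that curve-decomposition results are classically formulated for normal currents (finite mass and finite boundary mass), while the hypothesis here only gives $\mathbf{M}_{\alpha}(T)<\infty$. I expect this to be handled either by invoking the Lagrangian formulation directly for $\alpha$-optimal paths of finite $\mathbf{M}_{\alpha}$-cost (for which the relevant machinery is available in the ramified transport literature), or by first proving the estimate on high-multiplicity truncations of $T$ and then passing to the limit via monotone convergence; either route avoids needing to assume $\mathbf{M}(T)<\infty$ as an input.
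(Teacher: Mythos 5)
Your proposal is correct and follows essentially the same route as the paper: the Hölder-type factorization $\theta=\theta^{\alpha}\theta^{1-\alpha}$ combined with the pointwise bound $\theta(x)\le\mu^{+}(X)=\tfrac12\mathbf{M}(\partial T)$, which the paper likewise justifies via the good (curve) decomposition of an $\alpha$-optimal path of finite $\mathbf{M}_{\alpha}$-cost, i.e.\ its equation (\ref{eqn: theta}). The technical caveat you raise about finite mass is moot under the paper's convention that rectifiable currents have integrable multiplicity, hence finite mass.
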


\begin{proof}
Suppose $T=\underline{\underline{\tau}}(M, \theta, \xi)$ is an $\alpha$%
-optimal transport path from $\mu^+$ to $\mu^-$, where $\partial T=\mu^-
-\mu^+$ is the Jordan decomposition of $\partial T$ as a signed measure.
Since $T$ is an $\alpha$-optimal transport path \changetext{of finite cost, it follows (from (\ref{eqn: theta}) for instance)} that $\theta(x)\le \mu^+(X)=\frac{1}{2}\mathbf{M}(\partial T)$ for \changetext{$\mathcal{H}^1$-}
a.e. $x\in M$. Thus,
\begin{eqnarray*}
\mathbf{M}(T)&=&\int_M \theta(x)d \mathcal{H}^1(x)=\int_M \theta(x)^{\alpha}
\theta(x)^{1-\alpha} d \mathcal{H}^1(x) \\
&\le & \int_M \theta(x)^\alpha (\mu^+(X))^{1-\alpha}d \mathcal{H}^1(x)=\left(\frac{%
\mathbf{M}(\partial T)}{2}\right)^{1-\alpha} \mathbf{M}_{\alpha}(T).
\end{eqnarray*}
\end{proof}

\changetext{In the rest of the analysis, we assume that $\mu$ and $\nu$ are mutually singular, and $h$ is continuous on the support of $\nu-\mu$.}

\begin{proposition}
\label{prop: Tzero}If $\min \{h(x):x\in spt(\mu )\}\geq \max \{h(x):x\in
spt(\nu )\}$, then $T^\ast=0$ is the unique solution to the ROTPB($\mu ,\nu $)
problem.
\end{proposition}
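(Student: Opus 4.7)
The plan is to show $\mathbf{E}(T)\geq 0=\mathbf{E}(0)$ for every admissible rectifiable 1-current $T$, with strict inequality unless $T=0$. The key point is to use the hypothesis on $h$ to force $\int_X h\,d(\partial T)\leq 0$, after which the nonnegativity (indeed, strict positivity on nonzero currents) of $\mathbf{M}_\alpha$ finishes both existence and uniqueness.

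First I would unpack the admissibility constraint. Under the standing assumption $\mu\perp\nu$, the Jordan decomposition of $\nu-\mu$ is $(\nu-\mu)^+=\nu$ and $(\nu-\mu)^-=\mu$, so $\partial T\preceq\nu-\mu$ is equivalent to $\tilde\mu:=(\partial T)^-\leq\mu$ and $\tilde\nu:=(\partial T)^+\leq\nu$ together with $\partial T=\tilde\nu-\tilde\mu$. In particular $spt(\tilde\mu)\subseteq spt(\mu)$ and $spt(\tilde\nu)\subseteq spt(\nu)$. Since $T$ is a rectifiable 1-current supported in the compact set $X$, evaluating $\partial T$ on the constant function $1$ gives $\partial T(X)=T(d1)=0$, whence $\|\tilde\mu\|=\|\tilde\nu\|$.

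Next, letting $a:=\min\{h(x):x\in spt(\mu)\}$ and $b:=\max\{h(x):x\in spt(\nu)\}$, the hypothesis reads $a\geq b$. The support containments above yield
\[
\int_X h\,d\tilde\mu \;\geq\; a\|\tilde\mu\|\qquad\text{and}\qquad \int_X h\,d\tilde\nu \;\leq\; b\|\tilde\nu\|,
\]
and combining these with $\|\tilde\mu\|=\|\tilde\nu\|$ and $a\geq b$ gives
\[
\int_X h\,d(\partial T)=\int_X h\,d\tilde\nu-\int_X h\,d\tilde\mu \;\leq\; (b-a)\|\tilde\mu\|\;\leq\; 0.
\]
Therefore $\mathbf{E}(T)=\mathbf{M}_\alpha(T)-\int_X h\,d(\partial T)\geq \mathbf{M}_\alpha(T)\geq 0=\mathbf{E}(0)$, so $T=0$ is a minimizer.

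For uniqueness, I would observe that whenever $T=\underline{\underline{\tau}}(M,\theta,\xi)$ is a nonzero rectifiable 1-current, $\theta>0$ on $M$ by the multiplicity convention and $\mathcal{H}^1(M)>0$ since $\mathbf{M}(T)>0$; hence $\mathbf{M}_\alpha(T)=\int_M\theta^\alpha\,d\mathcal{H}^1>0$, which upgrades the chain of inequalities to $\mathbf{E}(T)>0=\mathbf{E}(0)$. I do not anticipate a genuine obstacle in this argument; the only delicate point is the clean reading of $\tilde\mu,\tilde\nu$ from the Jordan decomposition of $\partial T$, which is immediate thanks to the standing mutual singularity of $\mu$ and $\nu$.
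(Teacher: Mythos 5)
Your proposal is correct and follows essentially the same argument as the paper: bound $\int_X h\,d(\partial T)$ using $\min\{h \text{ on } spt(\mu)\}\geq\max\{h \text{ on } spt(\nu)\}$ together with $\|\tilde\mu\|=\|\tilde\nu\|$, so that $\mathbf{E}(T)\geq\mathbf{M}_\alpha(T)\geq 0$ with equality only for $T=0$. Your explicit unpacking of $\partial T\preceq\nu-\mu$ via the Jordan decomposition and the remark that $\mathbf{M}_\alpha(T)>0$ for nonzero $T$ are just slightly more detailed renderings of the same steps.
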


\begin{proof}
Suppose $T^\ast$ is a solution to the ROTPB($\mu ,\nu $) problem with $\partial T^\ast=\tilde{\nu}-\tilde{%
\mu}$. Since $\tilde{\mu}$ and $\tilde{\nu}$ have the same mass,
\begin{eqnarray*}
&&\mathbf{E}(T^\ast)=\mathbf{M}_{\alpha }(T^\ast)-\int_{X}hd\tilde{\nu}+\int_{X}hd%
\tilde{\mu} \\
&\geq &\mathbf{M}_{\alpha }(T^\ast)-\int_{X}\max \{h(x):x\in spt(\nu )\}d\tilde{%
\nu}+\int_{X}\min \{h(x):x\in spt(\mu )\}d\tilde{\mu} \\
&=&\mathbf{M}_{\alpha }(T^\ast)+\left( \min \{h(x):x\in spt(\mu )\}-\max
\{h(x):x\in spt(\nu )\}\right) \tilde{\mu}(X)\geq 0
\end{eqnarray*}%
\changetext{where} the equality holds if and only if $T^\ast=0$.
\end{proof}

The condition in the proposition implies that it is impossible to obtain
positive net payoff from relocating mass, needless to mention the incurred transportation cost. It is thus in the best interest of
the planner to not move any mass at all. This proposition illustrates the
role of boundary payoff played in the problem, which we will further examine
in Section \ref{sec: impact of h}.

Suppose that the ROTPB($\mu, \nu$) problem has a solution $T^*\in
Path(\mu ^*,\nu^*)$. Then, $T^*$ is inherently an $\alpha$-optimal transport path in $%
Path(\mu ^*,\nu^*)$ \changetext{with finite $\mathbf{M}_\alpha$ cost.} Thus, $T^*$ itself exhibits some nice regularity
properties (acyclic, \changetext{uniform upper-bound on the degree of vertices, uniform lower-bound on the angles between edges at each vertex,} boundary and interior
regularity, etc) as stated in \cite{xia2015motivations}
for being $\mathbf{M}_\alpha$ optimal.

\section{Properties of the optimal allocation measures}
This section is devoted to characterizing the optimal allocation measures $\mu^\ast$ and $\nu^\ast$.
Let
\begin{equation*}
\mathcal{E}(\mu,\nu):=\min\left\{d_{\alpha }(\tilde{\mu},\tilde{\nu}%
)-\int_{X}hd\tilde{\nu}+\int_{X}hd\tilde{\mu}\ \bigg|\
 \tilde{\mu}\changetext{\leq} \mu \text{ and } \tilde{\nu}\changetext{\leq} \nu \text{ with } ||%
\tilde{\mu}||=||\tilde{\nu}||
\right\}
\end{equation*}%
denote the minimum value of the ROTPB($\mu, \nu$) problem. We first observe some basic properties of $\mathcal{E}$.

\begin{proposition}
\label{Prop: E0} Suppose $0 \changetext{\leq} \tilde{\mu}\changetext{\leq} \mu$ and $0\le \tilde{%
\nu}\changetext{\leq} \nu$. Then,
\begin{equation}
\label{eqn: E_monotonic}
0 \ge \mathcal{E}(\tilde{\mu},\tilde{\nu})\ge \mathcal{E}(\mu,\nu).
\end{equation}
In particular, if $\mathcal{E}(\mu,\nu)=0$, then for all $(\tilde{\mu},
\tilde{\nu})$ with $0 \changetext{\leq} \tilde{\mu}\changetext{\leq} \mu$ and $0\le \tilde{\nu}%
\changetext{\leq} \nu$, it holds that $\mathcal{E}(\tilde{\mu},\tilde{\nu})=0$.
\end{proposition}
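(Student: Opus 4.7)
The plan is to prove the two inequalities separately, each by exhibiting an appropriate competitor in the respective minimization problem, and then obtain the ``in particular'' statement by a squeeze argument.

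For the upper bound $\mathcal{E}(\tilde{\mu},\tilde{\nu})\leq 0$, the idea is simply to use the zero current $T=0$ (equivalently, the trivial allocation $\tilde{\tilde{\mu}}=\tilde{\tilde{\nu}}=0$). This is feasible for the $\mathrm{ROTPB}(\tilde{\mu},\tilde{\nu})$ problem since $0\leq\tilde{\mu}$ and $0\leq\tilde{\nu}$, and it yields $\mathbf{E}(0)=\mathbf{M}_\alpha(0)-\int_X h\,d(\partial 0)=0$. Taking the infimum over all competitors thus gives $\mathcal{E}(\tilde{\mu},\tilde{\nu})\leq 0$.

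For the lower bound $\mathcal{E}(\tilde{\mu},\tilde{\nu})\geq \mathcal{E}(\mu,\nu)$, the key observation is that the feasibility constraint in the $\mathrm{ROTPB}$ problem is monotone in the underlying measures. Explicitly, let $(\tilde{\tilde{\mu}},\tilde{\tilde{\nu}})$ be any admissible pair for $\mathrm{ROTPB}(\tilde{\mu},\tilde{\nu})$, so that $\tilde{\tilde{\mu}}\leq\tilde{\mu}$ and $\tilde{\tilde{\nu}}\leq\tilde{\nu}$ with $\|\tilde{\tilde{\mu}}\|=\|\tilde{\tilde{\nu}}\|$. By transitivity of the partial order $\leq$ on positive measures, the hypothesis $\tilde{\mu}\leq\mu$ and $\tilde{\nu}\leq\nu$ yields $\tilde{\tilde{\mu}}\leq\mu$ and $\tilde{\tilde{\nu}}\leq\nu$. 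Hence $(\tilde{\tilde{\mu}},\tilde{\tilde{\nu}})$ is also admissible for $\mathrm{ROTPB}(\mu,\nu)$ with the same value of the objective functional $d_\alpha(\tilde{\tilde{\mu}},\tilde{\tilde{\nu}})-\int_X h\,d\tilde{\tilde{\nu}}+\int_X h\,d\tilde{\tilde{\mu}}$. Taking the infimum over all admissible pairs for $\mathrm{ROTPB}(\tilde{\mu},\tilde{\nu})$ gives the desired inequality.

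Combining the two inequalities establishes (\ref{eqn: E_monotonic}). The ``in particular'' statement then follows immediately: assuming $\mathcal{E}(\mu,\nu)=0$, the chain $0\geq \mathcal{E}(\tilde{\mu},\tilde{\nu})\geq\mathcal{E}(\mu,\nu)=0$ forces $\mathcal{E}(\tilde{\mu},\tilde{\nu})=0$.

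There is no real obstacle here; the proof is essentially bookkeeping about the feasible set. The only minor subtlety is checking that the zero current is indeed a legitimate competitor (so that the upper bound $\mathcal{E}\leq 0$ is attained), but this is built into the problem formulation since $0\preceq \tilde{\nu}-\tilde{\mu}$ trivially.
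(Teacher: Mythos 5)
Your proof is correct and is exactly the argument the paper intends — the paper's proof is simply the one-line remark that the claim ``follows from the definition of $\mathcal{E}(\mu,\nu)$,'' and your write-up (zero allocation as competitor for the upper bound, monotonicity of the feasible set via transitivity of $\leq$ for the lower bound, then the squeeze) just spells out those details.
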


\begin{proof}
The results follow from the definition of $\mathcal{E}(\mu, \nu)$.
\end{proof}

Here, $\mathcal{E}(\mu,\nu)$ is non-positive and monotonic since $-\mathcal{E}(\mu,\nu)$ represents the overall possible profit generated for the planner from the pair $(\mu, \nu)$.  When $\mathcal{E}(\mu,\nu)=0$, there is no way to generate a non-zero $\mathcal{E}(\tilde{\mu},\tilde{\nu})$ from some part $(\tilde{\mu},\tilde{\nu})$  of  $(\mu, \nu)$.

\begin{proposition}
\label{Prop: E1} Suppose for each $i=1,2$, $T_i^*$ is a solution to the
ROTPB($\mu_i, \nu_i$) problem, and $T_{1+2}^*$ is a solution to the ROTPB($%
\mu_1+\mu_2, \nu_1+\nu_2 $) problem, then
\begin{equation}  \label{eqn: E_T}
\mathbf{E}(T_{1+2}^*)\le \mathbf{E}(T_1^*)+\mathbf{E}(T_2^*).
\end{equation}
\end{proposition}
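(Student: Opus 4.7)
The natural strategy is to use the sum current $T := T_1^\ast + T_2^\ast$ as a competitor in the ROTPB($\mu_1+\mu_2,\nu_1+\nu_2$) problem. If I can show both that $T$ is feasible for that problem and that $\mathbf{E}(T) \leq \mathbf{E}(T_1^\ast) + \mathbf{E}(T_2^\ast)$, then the minimality of $T_{1+2}^\ast$ forces the bound (\ref{eqn: E_T}) immediately.

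For feasibility, I would write the Jordan decompositions $\partial T_i^\ast = \tilde{\nu}_i - \tilde{\mu}_i$ with $\tilde{\mu}_i \leq \mu_i$, $\tilde{\nu}_i \leq \nu_i$ and $\tilde{\mu}_i \perp \tilde{\nu}_i$, which come directly from $\partial T_i^\ast \preceq \nu_i - \mu_i$ together with the standing mutual singularity of $\mu_i$ and $\nu_i$. Then $\partial T = (\tilde{\nu}_1 + \tilde{\nu}_2) - (\tilde{\mu}_1 + \tilde{\mu}_2)$, and even if the summed positive and negative parts are not themselves mutually singular, the Jordan positive part of $\partial T$ remains dominated by $\tilde{\nu}_1 + \tilde{\nu}_2 \leq \nu_1 + \nu_2$ while the Jordan negative part is dominated by $\tilde{\mu}_1 + \tilde{\mu}_2 \leq \mu_1 + \mu_2$, so that $\partial T \preceq (\nu_1+\nu_2) - (\mu_1+\mu_2)$.

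For the cost comparison I would invoke the sub-additivity of $\mathbf{M}_\alpha$ under addition of rectifiable currents. Writing each $T_i^\ast = \underline{\underline{\tau}}(M_i, \theta_i, \xi_i)$, the multiplicity of $T_1^\ast + T_2^\ast$ at an $\mathcal{H}^1$-a.e.\ point of $M_1 \cap M_2$ equals either $\theta_1 + \theta_2$ or $|\theta_1 - \theta_2|$, depending on whether the approximate tangent orientations agree. In either case, concavity of $t \mapsto t^\alpha$ for $\alpha \in [0,1)$ yields the pointwise bound $\theta_T^\alpha \leq \theta_1^\alpha + \theta_2^\alpha$ on $M_1 \cup M_2$, and hence $\mathbf{M}_\alpha(T) \leq \mathbf{M}_\alpha(T_1^\ast) + \mathbf{M}_\alpha(T_2^\ast)$ after integration. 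Combining this with the linearity $\int_X h\, d(\partial T) = \int_X h\, d(\partial T_1^\ast) + \int_X h\, d(\partial T_2^\ast)$ of the boundary pairing gives $\mathbf{E}(T) \leq \mathbf{E}(T_1^\ast) + \mathbf{E}(T_2^\ast)$, which finishes the argument.

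The only step that requires real care is the feasibility check when $\tilde{\mu}_1 + \tilde{\mu}_2$ and $\tilde{\nu}_1 + \tilde{\nu}_2$ could overlap in mass (as may happen when $\mu_1$ and $\nu_2$ fail to be mutually singular); this is handled purely at the Jordan-decomposition level as above. The subadditivity of $\mathbf{M}_\alpha$ and the linearity of the boundary pairing are standard, so no further technicalities are expected.
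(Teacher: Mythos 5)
Your core argument is exactly the paper's proof: use $T_1^\ast+T_2^\ast$ as a competitor in the ROTPB($\mu_1+\mu_2,\nu_1+\nu_2$) problem, bound $\mathbf{M}_\alpha(T_1^\ast+T_2^\ast)\le \mathbf{M}_\alpha(T_1^\ast)+\mathbf{M}_\alpha(T_2^\ast)$ by subadditivity of $t\mapsto t^\alpha$ on the overlap, use linearity of $\partial$ and of $\int_X h\,d(\cdot)$, and invoke minimality of $T_{1+2}^\ast$. Under the paper's standing convention that the two marginals of any ROTPB problem under consideration are mutually singular (so in particular $(\mu_1+\mu_2)\perp(\nu_1+\nu_2)$, whence $\bigl((\nu_1+\nu_2)-(\mu_1+\mu_2)\bigr)^{+}=\nu_1+\nu_2$ and likewise for the negative part), your feasibility check is precisely what is needed and the proof is complete.

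The one point to correct is your closing claim that the case where $\nu_1+\nu_2$ and $\mu_1+\mu_2$ overlap in mass ``is handled purely at the Jordan-decomposition level.'' It is not: the relation $\partial(T_1^\ast+T_2^\ast)\preceq(\nu_1+\nu_2)-(\mu_1+\mu_2)$ requires domination by the Jordan parts of the \emph{difference}, which after cancellation can be strictly smaller than $\nu_1+\nu_2$ and $\mu_1+\mu_2$; knowing $(\partial(T_1^\ast+T_2^\ast))^{+}\le\tilde\nu_1+\tilde\nu_2\le\nu_1+\nu_2$ does not yield it. Indeed, with overlap the competitor can be infeasible and the inequality (\ref{eqn: E_T}), read literally, can fail: take three points $z,x,w$, set $\mu_1=\delta_z$, $\nu_1=\delta_x$, $\mu_2=\delta_x$, $\nu_2=\delta_w$, and $h(z)=h(w)=0$, $h(x)$ large. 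Then $T_1^\ast$ ships the full unit from $z$ to $x$ with $\mathbf{E}(T_1^\ast)=|z-x|-h(x)<0$ and $T_2^\ast=0$ by Proposition \ref{prop: Tzero}, while the combined problem has constraint $\partial T\preceq\delta_w-\delta_z$ and (again by Proposition \ref{prop: Tzero}) minimum value $0>\mathbf{E}(T_1^\ast)+\mathbf{E}(T_2^\ast)$. So the statement must be read, as the paper implicitly does, with $\mu_1+\mu_2$ and $\nu_1+\nu_2$ mutually singular; your argument is then correct and coincides with the paper's.
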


This proposition implies that
\begin{equation}
\label{eqn: E_subadditivity}
\mathcal{E}(\mu_1+\mu_2,\nu_1+\nu_2)\le \mathcal{E}(\mu_1,\nu_1)+\mathcal{E}%
(\mu_2,\nu_2).
\end{equation}

\begin{proof}
By assumption, for each $i=1,2$, $T_i^*\in Path(\mu^*_i, \nu^*_i)$ with $%
\mu_i^* \changetext{\leq}\mu_i$ and $\nu_i^*\changetext{\leq}\nu_i$. Then, $T_1^*+T_2^*\in
Path(\mu^*_1+\mu^*_2, \nu^*_1+\nu^*_2)$ with $\mu_1^*+\mu_2^*\changetext{\leq}
\mu_1+\mu_2$ and $\nu_1^*+\nu_2^*\changetext{\leq}\nu_1+\nu_2$. Since $T_{1+2}^*$ is a
solution to the ROTPB($\mu_1+\mu_2, \nu_1+\nu_2$) problem, we have
\begin{eqnarray*}
\mathbf{E}(T_{1+2}^*)&\le & \mathbf{E}(T_1^*+T_2^*)=\mathbf{M}%
_{\alpha}(T_1^*+T_2^*)-\int_X h d(\partial T_1^*+\partial T_2^*) \\
&\le & \mathbf{M}_{\alpha}(T_1^*)+\mathbf{M}_{\alpha}(T_2^*)-\int_X h
d(\partial T_1^*)-\int_X h d(\partial T_2^*) \\
&=&\mathbf{E}(T_1^*)+\mathbf{E}(T_2^*).
\end{eqnarray*}
\end{proof}

Following from the above proof, if the equality in (\ref{eqn: E_T}) holds,
then
\begin{equation*}
\mathbf{M}_{\alpha}(T_1^*+T_2^*)=\mathbf{M}_{\alpha}(T_1^*)+\mathbf{M}%
_{\alpha}(T_2^*).
\end{equation*}
\changetext{ Suppose $T_i=\underline{\underline{\tau}}(M_i, \theta_i, \xi_i)$ with $\theta_i(x)>0$ for $\mathcal{H}^1$-a.e. $x\in M_i$ with $i=1,2$.} Since \changetext{$\alpha< 1$},
\begin{eqnarray*}
&&\mathbf{M}_{\alpha}(T_1^*+T_2^*)-\mathbf{M}_{\alpha}(T_1^*)-\mathbf{M}%
_{\alpha}(T_2^*) \\
&&\le \int_{M_1\cap
M_2}(\theta_1(x)+\theta_2(x))^{\alpha}-\theta_1(x)^{\alpha}-\theta_2(x)^{%
\alpha}d\mathcal{H}^1(x)\le 0,
\end{eqnarray*}
where the equalities hold only if $\mathcal{H}^1(M_1\cap M_2)=0.$ 

We now give a necessary condition on the solution to the ROTPB($\mu, \nu$) problem.

\begin{corollary}
\label{cor: zero_energy}
Suppose that $T^*\in
Path(\mu ^*,\nu^*)$ is a solution to the ROTPB($\mu, \nu$) problem. Then $\mathcal{E}(\mu-\mu^*, \nu-\nu^*)=0.$
\end{corollary}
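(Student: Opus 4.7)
The proof will combine the monotonicity from Proposition \ref{Prop: E0} with the subadditivity from Proposition \ref{Prop: E1}. Since $\mu^{\ast}\leq \mu$ and $\nu^{\ast}\leq \nu$, the difference measures $\mu-\mu^{\ast}$ and $\nu-\nu^{\ast}$ are non-negative, so that $\mathcal{E}(\mu-\mu^{\ast},\nu-\nu^{\ast})$ is well-defined and, by Proposition \ref{Prop: E0}, already satisfies the upper bound
\[
\mathcal{E}(\mu-\mu^{\ast},\nu-\nu^{\ast})\leq 0.
\]
Thus the whole task reduces to establishing the reverse inequality.

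The first key step is to identify $\mathcal{E}(\mu^{\ast},\nu^{\ast})$ with $\mathbf{E}(T^{\ast})=\mathcal{E}(\mu,\nu)$. On one hand, since $T^{\ast}\in Path(\mu^{\ast},\nu^{\ast})$ with the trivial feasible choice $\tilde{\mu}=\mu^{\ast}$ and $\tilde{\nu}=\nu^{\ast}$, $T^{\ast}$ is itself a competitor for ROTPB$(\mu^{\ast},\nu^{\ast})$, so that $\mathcal{E}(\mu^{\ast},\nu^{\ast})\leq \mathbf{E}(T^{\ast})=\mathcal{E}(\mu,\nu)$. On the other hand, the monotonicity (\ref{eqn: E_monotonic}) applied to $\mu^{\ast}\leq \mu$ and $\nu^{\ast}\leq \nu$ gives $\mathcal{E}(\mu^{\ast},\nu^{\ast})\geq \mathcal{E}(\mu,\nu)$. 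The two bounds together yield
\[
\mathcal{E}(\mu^{\ast},\nu^{\ast})=\mathcal{E}(\mu,\nu).
\]

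The second step is to apply the subadditivity inequality (\ref{eqn: E_subadditivity}) with the splittings $\mu=\mu^{\ast}+(\mu-\mu^{\ast})$ and $\nu=\nu^{\ast}+(\nu-\nu^{\ast})$, which yields
\[
\mathcal{E}(\mu,\nu)\leq \mathcal{E}(\mu^{\ast},\nu^{\ast})+\mathcal{E}(\mu-\mu^{\ast},\nu-\nu^{\ast}).
\]
Substituting the equality from the first step and cancelling $\mathcal{E}(\mu,\nu)$ gives $\mathcal{E}(\mu-\mu^{\ast},\nu-\nu^{\ast})\geq 0$, which combined with the upper bound from Proposition \ref{Prop: E0} yields the desired equality.

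\textbf{Anticipated obstacles.} No serious obstacle is expected: both ingredients (monotonicity and subadditivity) are already available, and the existence theorem guarantees that $\mathcal{E}$ is indeed attained on all the pairs of measures appearing in the argument. The only point requiring some care is verifying that $T^{\ast}$ genuinely competes in ROTPB$(\mu^{\ast},\nu^{\ast})$, which follows immediately from the definition since $\partial T^{\ast}=\nu^{\ast}-\mu^{\ast}$ and the feasibility constraints are trivially satisfied with equality. Thus the corollary should be a short two-line consequence of the two preceding propositions.
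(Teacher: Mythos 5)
Your proof is correct and follows essentially the same route as the paper: identify $\mathcal{E}(\mu^{\ast},\nu^{\ast})=\mathcal{E}(\mu,\nu)$ using the optimality of $T^{\ast}$ together with the monotonicity (\ref{eqn: E_monotonic}), then combine the subadditivity (\ref{eqn: E_subadditivity}) with the nonpositivity bound to squeeze $\mathcal{E}(\mu-\mu^{\ast},\nu-\nu^{\ast})$ to zero. Your write-up merely spells out the justification of the equality $\mathcal{E}(\mu,\nu)=\mathcal{E}(\mu^{\ast},\nu^{\ast})$ that the paper states in one line.
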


\begin{proof}
Since $T^*\in Path(\mu ^*,\nu^*)$ is a solution to the ROTPB($\mu, \nu$)
problem, $\mathcal{E}(\mu, \nu)=\mathcal{E}(\mu^*, \nu^*) $. By (\ref{eqn: E_monotonic}) and (\ref{eqn: E_subadditivity}),
\begin{equation*}
0\ge \mathcal{E}(\mu-\mu^*, \nu-\nu^*)\ge \mathcal{E}(\mu,\nu)-\mathcal{E}%
(\mu^*,\nu^*)=0.
\end{equation*}
Therefore, $\mathcal{E}(\mu-\mu^*, \nu-\nu^*)=0$.
\end{proof}

The lemma says that the mass left unmoved by the solution would not generate further gains for the planner.

\begin{proposition}
\label{prop: no_sigma_proportion}
Suppose that the ROTPB($\mu, \nu$) problem has a non-zero solution $T^*\in
Path(\mu ^*,\nu^*)$ and $\alpha<1$. Then there exists no real number $%
\sigma>1$ such that $\sigma \mu^*\changetext{\leq} \mu$ and $\sigma \nu^*\changetext{\leq} \nu$.
\end{proposition}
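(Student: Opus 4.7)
The plan is to argue by contradiction: assume some $\sigma_0>1$ with $\sigma_0\mu^*\le \mu$ and $\sigma_0\nu^*\le \nu$ exists, and exhibit a scaled version of $T^*$ that is a strictly better competitor in ROTPB$(\mu,\nu)$. The intuition is that $\mathbf{M}_\alpha$ is sub-linear in the multiplicity (because $\alpha<1$), while the boundary-payoff term is linear; so if there is room to scale up by a factor slightly larger than $1$, the payoff gain outweighs the transport-cost penalty.

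First I would replace $T^*=\underline{\underline{\tau}}(M,\theta,\xi)$ by $\sigma T^*:=\underline{\underline{\tau}}(M,\sigma\theta,\xi)$ for $\sigma\in(1,\sigma_0]$. Since $\mu$ and $\nu$ are mutually singular (Remark after Theorem \ref{thm: existence}), the decomposition $\mu^*\le\mu$, $\nu^*\le\nu$ forces $\mu^*\perp \nu^*$, so the Jordan decomposition of $\partial(\sigma T^*)=\sigma\nu^*-\sigma\mu^*$ is $(\sigma\nu^*,\sigma\mu^*)$, and the hypothesis gives $\partial(\sigma T^*)\preceq \nu-\mu$. Thus $\sigma T^*$ is admissible. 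Using homogeneity of $\mathbf{M}_\alpha$ and linearity of the boundary operator, I would write
\[
\mathbf{E}(\sigma T^*)=\sigma^\alpha\mathbf{M}_\alpha(T^*)-\sigma\int_X h\,d(\partial T^*)=\sigma^\alpha A-\sigma B,
\]
where $A:=\mathbf{M}_\alpha(T^*)$ and $B:=\int_X h\,d(\partial T^*)$.

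Next I would extract the two facts needed to drive the contradiction. Since $T^*\ne 0$, we have $A>0$. Since $T=0$ is always admissible and gives $\mathbf{E}(0)=0$, optimality of $T^*$ yields $f(1)=A-B\le 0$, i.e.\ $B\ge A>0$. The final step is the elementary calculus observation that, with $\alpha<1$,
\[
f'(1)=\alpha A-B\le (\alpha-1)A<0,
\]
so $f(\sigma)<f(1)=\mathbf{E}(T^*)$ for $\sigma>1$ sufficiently close to $1$ (and in particular in $(1,\sigma_0]$), contradicting the minimality of $T^*$.

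The main obstacle I anticipate is the admissibility step: one has to correctly translate the hypothesis $\sigma\mu^*\le\mu$, $\sigma\nu^*\le\nu$ into $\partial(\sigma T^*)\preceq \nu-\mu$, and this hinges on $\mu\perp\nu$ so that the Jordan decompositions line up cleanly. Once the scaled current is legal, the role of $\alpha<1$ is exactly the familiar concavity fact that grouping mass onto an existing path is cheaper per unit than the payoff gained from moving it, and the argument closes itself.
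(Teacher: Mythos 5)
Your proof is correct and follows essentially the same route as the paper: the paper considers $g(\lambda)=\mathbf{E}(\lambda\mu^*,\lambda\nu^*)=\lambda^\alpha d_\alpha(\mu^*,\nu^*)-\lambda\int_X h\,d(\partial T^*)$ on $[0,\sigma]$ and derives the contradiction from $g'(1)<0$, using exactly your two facts $\mathbf{M}_\alpha(T^*)>0$ and $\mathbf{E}(T^*)\le\mathbf{E}(0)=0$. Scaling the current $\sigma T^*$ directly instead of scaling the boundary measures and invoking $d_\alpha$-homogeneity is only a cosmetic difference (if anything, slightly more economical), and your admissibility check via mutual singularity matches the paper's standing assumption.
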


\begin{proof}
Otherwise, assume that there exists a real number $\sigma >1$ such that $%
\sigma \mu ^{\ast }\changetext{\leq} \mu $ and $\sigma \nu ^{\ast }\changetext{\leq} \nu $. We
consider the function
\begin{equation*}
g(\lambda ):=\mathbf{E}(\lambda \mu ^{\ast },\lambda \nu ^{\ast })=\lambda
^{\alpha }d_{\alpha }(\mu ^{\ast },\nu ^{\ast })-\lambda \int_{X}|h|d\nu
^{\ast }+\lambda \int_{X}|h|d\mu ^{\ast }
\end{equation*}%
for $\lambda \in \lbrack 0,\sigma ]$. Since $T^{\ast }\in Path(\mu ^{\ast
},\nu ^{\ast })$ is a non-zero solution to the ROTPB($\mu ,\nu $) problem, $%
d_{\alpha }(\mu ^{\ast },\nu ^{\ast })=\mathbf{M}_{\alpha }(T^{\ast })>0$.
Thus, given $\alpha <1$, 
\begin{eqnarray*}
g^{\prime }(1) &=&\alpha d_{\alpha }(\mu ^{\ast },\nu ^{\ast
})-\int_{X}|h|d\nu ^{\ast }+\int_{X}|h|d\mu ^{\ast } \\
&<&d_{\alpha }(\mu ^{\ast },\nu ^{\ast })-\int_{X}|h|d\nu ^{\ast
}+\int_{X}|h|d\mu ^{\ast } \\
&=&\mathbf{E}(T^{\ast })\leq \mathbf{E}(0)=0.
\end{eqnarray*}%
As a result, there exists a $\lambda ^{\ast }\in (1,\sigma )$ such that $%
g(\lambda ^{\ast })<g(1)$. Because $\sigma \mu ^{\ast }\changetext{\leq} \mu $ and $%
\sigma \nu ^{\ast }\changetext{\leq} \nu $, we also have $\lambda ^{\ast }\mu ^{\ast
}\changetext{\leq} \sigma \mu ^{\ast }\changetext{\leq} \mu $ and $\lambda ^{\ast }\nu ^{\ast
}\changetext{\leq} \sigma \nu ^{\ast }\changetext{\leq} \nu $.
Hence $\mathbf{E}(\lambda ^{\ast }\mu ^{\ast },\lambda ^{\ast }\nu ^{\ast
})=g(\lambda ^{\ast })<g(1)=\mathbf{E}(\mu ^{\ast },\nu ^{\ast })$, which
contradicts with $T^{\ast }$ being a solution to the ROTPB($\mu ,\nu $)
problem.
\end{proof}

At a solution to the ROTPB($\mu ,\nu $) problem, the planner might only move
out a portion of the mass held at one source or ship in mass less than
registered at a single destination. However, the above proposition shows
that this can not happen at all the involved sources and destinations.
Otherwise, an improvement can be achieved by a proportional increase of the
transported mass at these locations. This is because the resulting marginal
payoff from moving more mass outweighs the marginal transportation cost
thanks to the transport economy of scale when $\alpha <1$.

The remainder of this section focuses on characterizing the optimal allocation measures $\mu^\ast$ and $\nu^\ast$, with the main result stated in Theorem \ref{main}. We first set up some technical bases.

\begin{definition}
\label{def: S_on_T}
Let $T=\underline{\underline{\tau}}(M,\theta,\xi)$ and $S=\underline{%
\underline{\tau}}(N,\rho,\eta)$ be two rectifiable 1-currents. We say $S$ is
on $T$ if $\mathcal{H}^1(N\setminus M)=0$, \changetext{and $\rho(x)\le \theta(x)$ for $\mathcal{H}^1$ almost all $ x\in N$. } 
\end{definition}
\changetext{ Note that when $S=\underline{%
\underline{\tau}}(N,\rho,\eta)$ is
on $T=\underline{\underline{\tau}}(M,\theta,\xi)$, then $\xi(x)=\pm \eta(x)$ for $\mathcal{H}^1$
almost all $ x\in N$, since two rectifiable sets have the same tangent a.e. on their intersection.}
\begin{theorem}
\label{thm: S is zero} Suppose that $T^*\in Path(\mu^*, \nu^*)$ is a
solution to the ROTPB($\mu, \nu$) problem, and $0<\alpha<1$. If there exists a rectifiable
1-current $S$ on $T^\ast$ with
\[
\partial(T^\ast+S) \preceq \nu-\mu \text{ and } \partial(T^\ast- S) \preceq \nu-\mu,\]
then $S=0$.
\end{theorem}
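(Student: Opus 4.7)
The plan is to use the optimality of $T^{\ast}$ together with the strict concavity of $t \mapsto t^{\alpha}$ for $0<\alpha<1$ by comparing $T^{\ast}$ with the two competitors $T^{\ast}+S$ and $T^{\ast}-S$, then averaging to eliminate the boundary payoff term. Both competitors are rectifiable $1$-currents whose boundaries satisfy the admissibility condition $\partial(T^{\ast}\pm S)\preceq \nu-\mu$ by hypothesis, so the inequality $\mathbf{E}(T^{\ast})\leq \mathbf{E}(T^{\ast}\pm S)$ must hold for each sign.

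Writing out $\mathbf{E}(T^{\ast})\leq \mathbf{E}(T^{\ast}+S)$ and $\mathbf{E}(T^{\ast})\leq \mathbf{E}(T^{\ast}-S)$ and expanding $\mathbf{E} = \mathbf{M}_{\alpha} - \int h\, d(\partial\cdot)$ gives
\begin{equation*}
\pm \int_X h\, d(\partial S) \leq \mathbf{M}_{\alpha}(T^{\ast}\pm S) - \mathbf{M}_{\alpha}(T^{\ast}).
\end{equation*}
Adding the two inequalities makes the boundary-payoff terms cancel exactly, yielding
\begin{equation*}
2\,\mathbf{M}_{\alpha}(T^{\ast}) \leq \mathbf{M}_{\alpha}(T^{\ast}+S) + \mathbf{M}_{\alpha}(T^{\ast}-S).
\end{equation*}
This is the crucial inequality: it converts the ROTPB-optimality of $T^{\ast}$ into an $\mathbf{M}_{\alpha}$-convexity-type inequality that does not involve $h$.

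Next I would exploit the hypothesis that $S$ is \emph{on} $T^{\ast}$. Writing $T^{\ast}=\underline{\underline{\tau}}(M,\theta,\xi)$ and using the remark that $S$ must have tangent $\pm\xi$ $\mathcal{H}^1$-a.e., I can represent $S$ on $M$ with signed multiplicity $\tilde{\rho}$, where $|\tilde{\rho}(x)|\leq \theta(x)$ $\mathcal{H}^1$-a.e. Consequently $T^{\ast}\pm S$ are supported on $M$ with non-negative multiplicities $\theta\pm\tilde{\rho}$, so
\begin{equation*}
\mathbf{M}_{\alpha}(T^{\ast}+S)+\mathbf{M}_{\alpha}(T^{\ast}-S) = \int_M \bigl[(\theta+\tilde{\rho})^{\alpha}+(\theta-\tilde{\rho})^{\alpha}\bigr]\, d\mathcal{H}^1.
\end{equation*}
Strict concavity of $t\mapsto t^{\alpha}$ on $[0,\infty)$ for $0<\alpha<1$ gives $(\theta+\tilde{\rho})^{\alpha}+(\theta-\tilde{\rho})^{\alpha}\leq 2\theta^{\alpha}$ with equality if and only if $\tilde{\rho}=0$ (including the boundary case $|\tilde{\rho}|=\theta$, where the left-hand side equals $2^{\alpha}\theta^{\alpha}<2\theta^{\alpha}$). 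Integrating, if $S\neq 0$ we obtain a strict inequality contradicting the displayed optimality bound, forcing $\tilde{\rho}\equiv 0$ $\mathcal{H}^1$-a.e., hence $S=0$.

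I expect the only subtle step to be the bookkeeping that identifies $T^{\ast}\pm S$ with the rectifiable currents of multiplicity $\theta\pm\tilde{\rho}$ on $M$; this rests entirely on the definition of \emph{$S$ on $T^{\ast}$} and on the tangent-alignment remark following Definition~\ref{def: S_on_T}. Everything else is a straightforward algebraic manipulation, so no compactness or regularity machinery is needed beyond what Theorem~\ref{thm: existence} already provides.
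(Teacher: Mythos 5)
Your proof is correct and follows essentially the same route as the paper: perturb the minimizer by $\pm S$, use that $S$ lies on $T^{\ast}$ to realize $T^{\ast}\pm S$ as currents on the same rectifiable set with multiplicities $\theta\pm\tilde{\rho}$, and conclude via strict concavity of $t\mapsto t^{\alpha}$. The only difference is cosmetic: the paper computes the second variation $g''(0)<0$ of $g(t)=\mathbf{E}(T^{\ast}+tS)$ on $[-1,1]$, whereas you compare just the endpoints $t=\pm 1$ and average, which even spares you the (implicit) admissibility check for intermediate $t$ and the differentiation under the integral sign.
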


\begin{proof}
Assume that $S=\underline{\underline{\tau }}(N,\rho ,\eta )$ is a non-zero
rectifiable 1-current on $T^\ast=\underline{\underline{\tau }}(W,\theta ,\xi )$.
One may assume that $N=W$ by extending $\rho (x)=0$ and $\eta (x)=\xi (x)$
for $x\in W\setminus N$. Since $T^{\ast }$ is a solution to the ROTPB($\mu
,\nu $) problem and $\partial (T^\ast \pm S)\preceq \nu -\mu $, the function $%
g(t):=\mathbf{E}(T^{\ast }+tS)$ defined on the interval $[-1,1]$ achieves
its minimum value at $t=0$. Nevertheless,
\begin{eqnarray*}
g(t) &=&\mathbf{E}(T^{\ast }+tS)=\mathbf{M}_{\alpha }(T^{\ast
}+tS)-\int_{X}hd(\partial (T^{\ast }+tS)) \\
&=&\mathbf{E}(T^{\ast })+\int_{W}\left\vert \theta (x)+t\rho (x)\langle \xi
(x),\eta (x)\rangle \right\vert ^{\alpha }-\theta (x)^{\alpha
}d\mathcal{H}^{1}(x)-t\int_{X}hd(\partial S).
\end{eqnarray*}%
Here, the value of the inner product $\langle \xi (x),\eta (x)\rangle =\pm 1$ for $\mathcal{H}^1-a.e. x\in W$. Then,
\begin{equation*}
g^{\prime \prime }(0)=\alpha (\alpha -1)\int_{W}\theta (x)^{\alpha -2}\rho
(x)^{2}d\mathcal{H}^{1}(x)<0,
\end{equation*}%
since $0<\alpha<1$ and $S$ is non-zero. This says that $g$ can not achieve a local minimum at $t=0$, a contradiction.
\end{proof}

\subsection{Finite atomic case}
In the context of \changetext{finite} atomic
measures, Theorem \ref{thm: S is zero} has important implications for the structure of the
optimal transport path $T^*$ as demonstrated by the following results.

\begin{proposition}
\label{prop: set P}Suppose both
\begin{equation*}
\mu =\sum_{i=1}^{\ell}a_{i}\delta _{x_{i}}\text{ and }\nu
=\sum_{j=1}^{n}b_{j}\delta _{y_{j}}
\end{equation*}%
are two \changetext{finite} atomic measures on $X$, $0<\alpha <1$,
and $T^*\in Path(\mu^*, \nu^*)$ is a solution to the ROTPB($\mu ,\nu $) problem. Also, let
\[
P:=spt(\mu-\mu^\ast)\cup spt(\nu-\nu^\ast)\]
denote the union of the supports of the measures $\mu-\mu^*$ and $\nu-\nu^*$. Then each connected component of the support of $T^{\ast }$ contains at most one element of $P$.
\end{proposition}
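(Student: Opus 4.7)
The plan is to argue by contradiction via Theorem \ref{thm: S is zero}. Suppose some connected component $K$ of $spt(T^*)$ contains two distinct points $z_1,z_2\in P$. The goal is to exhibit a nonzero rectifiable 1-current $S$ that is on $T^*$ and satisfies both $\partial(T^*+S)\preceq \nu-\mu$ and $\partial(T^*-S)\preceq \nu-\mu$, which by Theorem \ref{thm: S is zero} forces $S=0$, the desired contradiction.

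Since $\mu$ and $\nu$ are finite atomic and $0<\alpha<1$, the regularity properties of $\alpha$-optimal transport paths noted after the existence theorem imply that $spt(T^*)$ carries the structure of a finite weighted graph. In particular the component $K$ is path-connected through edges of $T^*$. I would pick a simple polygonal path $\gamma\subseteq K$ joining $z_1$ and $z_2$, let $\theta_{\min}>0$ be the minimum multiplicity of $T^*$ along $\gamma$, and, for each $\epsilon\in(0,\theta_{\min})$, define $S=\underline{\underline{\tau}}(\gamma,\epsilon,\eta)$ with orientation $\eta$ on each edge of $\gamma$ chosen so that $\partial S=\epsilon(\delta_{z_2}-\delta_{z_1})$ (flipping overall sign later if convenient). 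Definition \ref{def: S_on_T} then certifies that $S$ is on $T^*$, and clearly $S\neq 0$.

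The substance of the argument is the verification that $\partial(T^*\pm S)\preceq \nu-\mu$ for all sufficiently small $\epsilon>0$. Away from $\{z_1,z_2\}$ this is immediate since $\partial S$ vanishes and $\partial T^*\preceq \nu-\mu$. At $z_1$ and $z_2$ it reduces to a finite case analysis according as each $z_i$ lies in $spt(\mu-\mu^*)$ or $spt(\nu-\nu^*)$, combined with the chosen orientation. The membership $z_i\in P$ gives one-sided slack $\mu(z_i)-\mu^*(z_i)>0$ or $\nu(z_i)-\nu^*(z_i)>0$ that accommodates the $+S$ perturbation, while the $-S$ perturbation needs room in the opposite direction, bounded by $\mu^*(z_i)$ or $\nu^*(z_i)$. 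Choosing $\epsilon$ less than all these positive quantities makes both perturbations feasible, and Theorem \ref{thm: S is zero} then closes the argument.

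The main obstacle is the degenerate configuration where the two-sided slack fails at some $z_i$, e.g.\ $z_i\in spt(\mu-\mu^*)$ together with $\mu^*(z_i)=0$. Such a $z_i$ is not a source of $T^*$ at all, yet lies on $spt(T^*)$ as an interior point or a Steiner point, and the $-S$ direction at $z_i$ forces $\epsilon=0$. I expect to resolve this either (i) by ruling out this pathology under optimality, combining Proposition \ref{prop: no_sigma_proportion} with a first-variation argument that introduces $z_i$ as a source to show the unused atomic mass $\mu(z_i)$ could be absorbed strictly improving $\mathbf{E}$, or (ii) by locally redesigning $S$ near $z_i$, rerouting its boundary so that $\partial S$ is supported on a nearby proper vertex of $T^*$ inside $K$ (in $spt(\mu^*)\cup spt(\nu^*)$) where positive allocation is guaranteed. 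Once this degenerate configuration is disposed of, the clean application of Theorem \ref{thm: S is zero} yields the contradiction.
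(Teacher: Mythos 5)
Your generic-case construction is the same as the paper's: by acyclicity of $T^*$ there is a curve $\gamma$ in $spt(T^\ast)$ joining the two points of $P$, one takes $S=\sigma I_\gamma$ with $\sigma$ below the minimal multiplicity along $\gamma$ and below the available slacks at the two endpoints, and Theorem \ref{thm: S is zero} gives the contradiction. Your endpoint bookkeeping is also the right one: the $+S$ direction is paid for by the unused mass $(\mu-\mu^\ast)(\{z_i\})$ or $(\nu-\nu^\ast)(\{z_i\})$, while the $-S$ direction needs room $\mu^\ast(\{z_i\})$ or $\nu^\ast(\{z_i\})$. Up to this point your proposal and the paper's proof coincide.

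The gap is the degenerate configuration you flag ($z_i\in P\cap spt(T^\ast)$ but with zero optimal allocation there), which your write-up leaves unresolved; and neither of your two suggested repairs can resolve it, because the configuration genuinely occurs at optimality once $h$ is allowed to vary over the sources. For instance, in $\mathbb{R}^2$ take $\mu=\delta_{(0,0)}+\delta_{(1,0)}$, $\nu=2\delta_{(2,0)}$, $h((0,0))=0$, $h((1,0))=100$, $h((2,0))=10$: writing $s,t$ for the masses drawn from $(0,0)$ and $(1,0)$, a slicing bound gives $\mathbf{E}(T)\ge 2s^{\alpha}-10s+90t\ge -8$, with equality only for $s=1$, $t=0$ and $T$ the straight unit-multiplicity segment from $(0,0)$ to $(2,0)$; this optimal $T^*$ has connected support containing both $(1,0)\in spt(\mu-\mu^\ast)$ and $(2,0)\in spt(\nu-\nu^\ast)$. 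So repair (i) fails — optimality does not exclude an expensive unused atom lying on the path, since sourcing there raises $\int h\,d\tilde\mu$ faster than it saves transport cost — and repair (ii) fails because rerouting $\partial S$ to a nearby genuinely used vertex just relocates the obstruction: at a fully used source the $+S$ direction becomes infeasible. You should also note that this is exactly the step the paper's own proof passes over: from $\mu^\ast\pm\sigma(\delta_{p_2}-\delta_{p_1})\le\mu$ it concludes $\partial(T^\ast\pm S)\preceq\nu-\mu$, but that inference additionally requires $\sigma\le\mu^\ast(\{p_1\})$ and $\sigma\le\mu^\ast(\{p_2\})$ (otherwise the Jordan positive part of $\partial(T^\ast\pm S)$ acquires an atom at a point of $spt(\mu)$, where $\nu$ vanishes), and this positivity is never checked. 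What your argument (and the paper's) actually establishes is the statement with $P$ replaced by $P\cap\bigl(spt(\mu^\ast)\cup spt(\nu^\ast)\bigr)$; closing the remaining case is not a technicality but requires either this restriction of the claim or extra hypotheses, e.g.\ $h$ constant on $spt(\mu)$ and on $spt(\nu)$ as in Section \ref{sec: impact of h}.
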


\begin{proof}
Without loss of generality, we may assume that the support of $T^{\ast }$ is
connected, and we want to show that the set
\begin{eqnarray}
P &=&\{x_{i}:\mu ^{\ast }(\{x_{i}\})<\mu (\{x_{i}\})\}\bigcup \{y_{j}:\nu
^{\ast }(\{y_{j}\})<\nu (\{y_{j}\})\}  \label{P_set} \\
&=&\{p\in \{x_{1},\cdots ,x_{\ell},y_{1},\cdots ,y_{n}\}:(\nu -\nu ^{\ast
})\{p\}+(\mu -\mu ^{\ast })\{p\}>0\}  \notag
\end{eqnarray}
contains at most one element. Assume that $P$ has at least two distinct elements $p_{1}$ and $%
p_{2}$. Also, we may assume that $(\mu -\mu ^{\ast })\{p_{1}\}>0$ and $(\mu
-\mu ^{\ast })\{p_{2}\}>0$ (the proofs for the other cases are similar). \changetext{Since $T^*$ is acyclic (see \cite[Propositions 2.1 and 2.2]{xia2015motivations}), there exists a unique oriented curve $\gamma$} on the support of $T^{\ast }$ from $%
p_{1}$ to $p_{2}$, and set $S=\sigma\changetext{ I_\gamma}$ with
\[\sigma
=\min (\{\theta (x):x\in \gamma \},(\mu -\mu ^{\ast })\{p_{1}\},(\mu -\mu
^{\ast })\{p_{2}\})>0,\]
\changetext{and $I_\gamma$ being the rectifiable 1-current associated with $\gamma$ (see (\ref{eqn: I_gamma}) for the precise definition).} Then, $S$ is non-zero and on $T$ in the sense of Definition \ref{def: S_on_T}. Moreover, by the choice of $\sigma $,
\begin{equation*}
\changetext{\mu ^{\ast } \pm \sigma (\delta _{p_{2}}-\delta _{p_{1}})\leq \mu}.
\end{equation*}%
Thus,
\begin{equation*}
\partial (T\pm S)=\nu ^{\ast }-\mu ^{\ast }\pm \sigma (\delta
_{p_{2}}-\delta _{p_{1}})\preceq \nu -\mu .
\end{equation*}%
According to Theorem \ref{thm: S is zero}, $S$ must be zero, a
contradiction. 
\end{proof}

The set $P$ in Proposition \ref{prop: set P} represents the collection of boundary nodes on
which the amount of mass involved in the optimal transport path $T^{\ast }$
is smaller than its counterpart specified initially. The proof hinges on the
fact that if a connected component of the support of $T^{\ast }$ contains
two elements in $P$, one would be able to cut cost by reallocating the mass
transported along $T^{\ast }$, which however is precluded by Theorem \ref%
{thm: S is zero}.

According to Proposition \ref{prop: no_sigma_proportion}, in the \changetext{finite} atomic case, there exists at least one point $p$ on the support of $\mu^*$ or one point $q$ on the support of $\nu^*$, such that either
\begin{equation}
\label{eqn: agree}
    \mu^*(\{p\})=\mu(\{p\}) \text{ or } \nu^*(\{q\})=\nu(\{q\}).
\end{equation}
Proposition \ref{prop: set P} says that with at most one exception on each connected component, equation (\ref{eqn: agree}) holds for all points $p$ or $q$ on the supports of $\mu^*$ or $\nu^*$, respectively. Consequently,
with the help of Proposition \ref{prop: set P} and Corollary \ref{cor: zero_energy},
we may prove Theorem \ref{thm: atomic} as follows.

\textit{Proof of Theorem \ref{thm: atomic}.}
By Proposition \ref{prop: set P}, each $K_k$ contains at most one element of the set $P$. Thus, one of the following two cases holds:
\begin{itemize}
\item[Case 1:] \[
\mu ^{\ast } \mres K_k=\mu \mres K_k-m_k\delta_{p_k} \text{ and } \nu ^{\ast } \mres K_k=\nu \mres K_k
\]
for some point $p_k\in K_k \cap spt(\mu^\ast)$ and some real number $m_k \ge 0$.
\item[Case 2:] \[
\mu ^{\ast } \mres K_k=\mu \mres K_k \text{ and } \nu ^{\ast } \mres K_k=\nu \mres K_k-n_k \delta_{q_k}
\]
for some point $q_k\in K_k \cap spt(\nu^\ast)$ and some real number $n_k \ge 0$.
\end{itemize}

In the first case,
\[\mu^\ast(K_k)=\mu(K_k)-m_k \text{ and } \nu^\ast(K_k)=\nu(K_k). \]
Since $\mu(K_k)\ge\mu^\ast(K_k)=\nu^\ast(K_k)=\nu(K_k)$, it follows that
\[m_k=\mu(K_k)-\mu^\ast(K_k)=\mu(K_k)-\nu(K_k)=\max\{\mu(K_k)-\nu(K_k), 0\}.\]

Analogously, in the second case, we pick
\[
n_k=\max\{\nu(K_k)-\mu(K_k), 0\}
\]
as desired.
\qed

If the measure of mass at each source node is sufficiently large, all source
nodes would fall into the set $P$, yielding a natural partition of the
transport path $T^{\ast}$ as stated in the following corollary. In this
case, destination nodes can be classified by the source node from which they
receive the mass. Under a symmetric condition, a similar decomposition exists for destination nodes.

\begin{corollary}
\label{corollary: single_source} Suppose both
\begin{equation*}
\mu =\sum_{i=1}^\ell a_i \delta_{x_i} \text{ and }\nu=\sum_{j=1}^n b_j
\delta_{y_j}
\end{equation*}
are (positive) \changetext{finite} atomic measures on $X$, and $T^*$ is a solution to the ROTPB($%
\mu, \nu$) problem.
\begin{itemize}
    \item[(a)] If
\begin{equation}
\min_{1\le i\le \ell}{a_i} \ge \sum_{j=1}^n b_j,  \label{eqn: allocation}
\end{equation}
then $T^*$ can be decomposed as $T^*=T_1+T_2+\cdots+T_\ell$, where for each $%
i=1,\cdots, \ell$, $T_i$ is an $\alpha$-optimal transport path from a single
source located at $x_i$.
\item[(b)] Similarly, if
\begin{equation}
\min_{1\le j\le n}{b_j} \ge \sum_{i=1}^\ell a_i,  \label{eqn: allocation_b}
\end{equation}
then $T^*$ can be decomposed as $T^*=T_1+T_2+\cdots+T_n$, where for each $%
j=1,\cdots, n$, $T_j$ is an $\alpha$-optimal transport path to a single
destination located at $y_j$.
\end{itemize}

\end{corollary}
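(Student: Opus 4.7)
The plan is to apply Theorem \ref{thm: atomic} componentwise and argue that, under condition (\ref{eqn: allocation}), each connected component $K_k$ of $spt(T^*)$ contains exactly one source atom, call it $x_{i(k)}$. Once this is established, setting $T_i := T^* \mres K_{k(i)}$ for each $i$ in the image of $k \mapsto i(k)$, and $T_i := 0$ for the remaining sources $x_i \notin spt(T^*)$, the equality $T^* = \sum_{k} T^* \mres K_k = \sum_{i=1}^{\ell} T_i$ follows from the disjointness of the components and the injectivity of $k \mapsto i(k)$.

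To verify every $K_k$ contains at least one source, note that if no $x_i$ lay in $K_k$ then $\mu \mres K_k = 0$, so $\mu^* \mres K_k = 0$, and mass balance on the component forces $\nu^*(K_k) = \mu^*(K_k) = 0$. Hence $\partial(T^* \mres K_k) = 0$, and acyclicity of the $\alpha$-optimal $T^*$ (see \cite{xia2015motivations}) gives $T^* \mres K_k = 0$, contradicting $K_k \subseteq spt(T^*)$. The main step is to rule out two distinct sources $x_{i_1}, x_{i_2}$ lying in one $K_k$. If this occurred, (\ref{eqn: allocation}) would give $a_{i_1}, a_{i_2} \ge ||\nu|| \ge \nu(K_k)$, whence $\mu(K_k) \ge a_{i_1} + a_{i_2} > \nu(K_k)$. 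By Theorem \ref{thm: atomic} this yields $n_k = 0$, $m_k = \mu(K_k) - \nu(K_k)$, and $\mu^*(K_k) = \nu(K_k)$. The point $p_k \in K_k \cap spt(\mu^*)$ must itself be a source, say $p_k = x_{i_0}$, for otherwise $\mu \mres K_k - m_k \delta_{p_k}$ would be negative at $p_k$; moreover $\mu^*(\{p_k\}) = a_{i_0} - m_k > 0$. Since every other source in $K_k$ is used in full and at least one of $x_{i_1}, x_{i_2}$ differs from $p_k$, call it $x_{i'}$, we obtain
\begin{equation*}
\mu^*(K_k) \ge (a_{i_0} - m_k) + a_{i'} > a_{i'} \ge ||\nu|| \ge \nu(K_k),
\end{equation*}
contradicting $\mu^*(K_k) = \nu(K_k)$.

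For the final step, each $T_i = T^* \mres K_{k(i)}$ has boundary $\partial T_i = \nu \mres K_{k(i)} - (a_i - m_{k(i)})\delta_{x_i}$ thanks to $n_{k(i)} = 0$, so $T_i$ is a transport path from the single source located at $x_i$. Its $\alpha$-optimality follows by a standard competitor argument: any rectifiable 1-current with the same boundary as $T_i$ and strictly smaller $\mathbf{M}_\alpha$ cost could be substituted for $T_i$ inside $T^*$ without altering $\partial T^*$, contradicting the ROTPB-optimality of $T^*$. Part (b) is proved by the symmetric argument, swapping the roles of $\mu$ and $\nu$ (with $m_k \leftrightarrow n_k$ and $p_k \leftrightarrow q_k$, putting each component in the over-demand case). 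The main obstacle is the uniqueness-of-source step above, where one must carefully combine the max-formula for $m_k$ from Theorem \ref{thm: atomic} with the strict positivity of $\mu^*$ at the exceptional point $p_k$; the componentwise restriction of currents and the competitor argument are routine.
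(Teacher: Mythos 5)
Your proposal is correct and follows essentially the paper's approach: a componentwise analysis in which condition (\ref{eqn: allocation}) forbids two sources in one connected component, followed by restricting $T^*$ to its components and noting each piece is $\alpha$-optimal with a single source. The only difference is cosmetic: the paper gets the one-source-per-component step directly from Proposition \ref{prop: set P} (both sources would lie in the exceptional set $P$, since $\mu^*(\{x_i\})<\|\nu^*\|\le a_i$), whereas you route through the formulas of Theorem \ref{thm: atomic} and a mass comparison $\mu^*(K_k)>\nu(K_k)$; your extra verifications (no sourceless components, injectivity, the competitor argument for optimality of each $T_i$) are fine and if anything slightly more explicit than the paper's.
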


\begin{proof}
We only need to prove case $(a)$ as $(b)$ follows from a symmetric argument. To do so, it is sufficient to show that each connected component of the support of $%
T^{\ast}$ contains only one source point in $\{x_{1},x_{2},\cdots ,x_{\ell}\}$%
. We prove it by contradiction. Assume that there exists a connected
component of the support of $T^{\ast }$ that contains at least two sources,
say $x_{1}$ and $x_{2}$. Then
\begin{equation*}
\mu ^{\ast }(\{x_{1}\})>0\text{ and }\mu ^{\ast }(\{x_{2}\})>0.
\end{equation*}%
As a result,
\begin{equation*}
\mu ^{\ast }(\{x_{1}\})<\mu ^{\ast }(\{x_{1}\})+\mu ^{\ast }(\{x_{2}\})\leq
||\mu ^{\ast }||=||\nu ^{\ast }||\leq \sum_{j=1}^{n}b_{j}\leq a_{1}=\mu(\{x_{1}\}),
\end{equation*}%
by (\ref{eqn: allocation}). This shows that $x_{1}$ belongs to the set $P$
in (\ref{P_set}). Similar argument leads to $x_{2}\in P$. This contradicts
Proposition \ref{prop: set P}. \changetext{Let $\{K_i: i=1,2,\cdots, \ell\}$ be the connected components of the support of $T^*$, and set $T_i=T\mres K_i$. Since $T$ is $\alpha$-optimal, and $\{K_i\}$ are pairwise disjoint, each $T_i$ is also $\alpha$-optimal.} 
\end{proof}

\subsection{General case}
In what follows, we generalize the results of Theorem \ref{thm: atomic} for $\mu $ and $\nu $ being any two Radon measures, not necessarily finite atomic. To do so, we adopt a Lagrangian approach, and follow some notations used in \cite{colombo2020}.

By Theorem \ref{thm: existence}, the ROTPB($\mu ,\nu $) problem has a solution
\begin{equation}
\label{eqn: T-star}
    T^*=\underline{\underline{\tau}}(W,\varphi,\zeta)\in Path(\mu^*, \nu^*).
\end{equation}

We denote by $\Gamma$ the space of 1-Lipschitz curves $\gamma: [0,\infty)\rightarrow \mathbb{R}^m $, which are eventually constant (and hence of finite length). For $\gamma\in \Gamma$, we denote the values
\[t_0(\gamma):=\sup\{t: \gamma \text{ is constant on }[0,t]\}\]
and
\[t_\infty(\gamma):=\inf\{t: \gamma \text{ is constant on }[t, \infty)\},\]
and denote $\gamma(\infty):=\lim_{t\rightarrow \infty}\gamma(t)$.
Given $\gamma\in \Gamma$, the projections of $\gamma$ onto its starting and stopping points are
\begin{eqnarray}
\label{eqn: projections}
p_{0}(\gamma):=\gamma(0) \text{ and } p_{\infty}(\gamma):=\gamma(\infty).
\end{eqnarray}

We say that a curve $\gamma\in \Gamma$ is \textit{simple} if $\gamma(s)\ne \gamma(t)$ for every $t_0(\gamma)\le s< t \le t_\infty(\gamma)$. \changetext{In particular, $\gamma$ is non-constant in any non-trivial sub-interval $[s,t] \subseteq [t_0(\gamma), t_\infty(\gamma)]$.}

For each simple curve $\gamma\in \Gamma$, we may canonically associate it with the rectifiable 1-current
\begin{equation}
    \label{eqn: I_gamma}
    I_\gamma:=\rect\left(\im(\gamma), \frac{\gamma'}{\abs{\gamma'}}, 1\right),
\end{equation}
where $\im(\gamma)$ denotes the image of the curve $\gamma$ in $\mathbb{R}^m$.
It is easy to check that $\mathbf{M}(I_\gamma)=\mathcal{H}^1(\im(\gamma))$ and $\partial I_\gamma=\delta_{\gamma(\infty)}-\delta_{\gamma(0)}$; since $\gamma$ is simple, if it is also non-constant, then $\gamma(\infty)\ne \gamma(0)$ and $\mathbf{M}(\partial I_\gamma)=2.$

A normal current $T\in \mathcal{D}_1(\mathbb{R}^m)$ is said \textit{acyclic} if there exists no non-trival current $S$ such that $\partial S=0$ and $\mathbf{M}(T)=\mathbf{M}(T-S)+\mathbf{M}(S)$.

Now, we recall a fundamental result of Smirnov in \cite{smirnov},
which establishes that every acyclic normal
1-current can be written as a weighted average of simple Lipschitz curves in the following sense.

\begin{definition}
Let $T$ be a normal 1-current in $\mathbb{R}^m$ represented as a vector-valued measure $\vec{T}\abs{T}$, and let
$\eta$ be a finite positive measure on $\Gamma$
such that
\begin{equation}
\label{eqn: T_eta}
    T=\int_\Gamma I_\gamma d\eta(\gamma)
\end{equation}
in the sense that for every smooth compactly supported 1-form $\omega\in \mathcal{D}^1(\mathbb{R}^m)$, it holds that
\begin{equation}
    T(\omega)=\int_\Gamma I_\gamma(\omega) d\eta(\gamma).
\end{equation}
We say that $\eta$ is a \textit{good decomposition} of $T$ if $\eta$ is supported on non-constant, simple curves and
satisfies the following equalities:
\begin{itemize}
    \item[(a)] $\mathbf{M}(T)=\int_\Gamma \mathbf{M}(I_\gamma) d\eta(\gamma)=\int_\Gamma \mathcal{H}^1(Im(\gamma)) d\eta(\gamma)$;
    \item[(b)] $\mathbf{M}(\partial T)=\int_\Gamma \mathbf{M}(\partial I_\gamma) d\eta(\gamma)=2 \eta(\Gamma)$.
\end{itemize}
\end{definition}
It has been shown in \cite[Theorem 10.1]{paolini} that optimal transport paths $T^*$ \changetext{ with finite $\mathbf{M}_\alpha$ cost} are acyclic,
and hence they admit such a good decomposition.

In the next result, we collect some useful properties of good decompositions, whose proof can be found in \cite[Proposition 3.6]{colombo}.

\begin{theorem}(Existence and properties of good decompositions)\cite[Theorem 5.1]{paolini2012} and \cite[Proposition 3.6]{colombo}. Let $T$ be an $\alpha$-optimal transport path from $\mu^-$ to $\mu^+$ with finite $\mathbf{M}_\alpha$ cost. Then $T$ is acyclic and there is a Borel finite measure $\eta$ on $\Gamma$ such that $\eta$ is a good decomposition of $T$. Moreover, if $\eta$ is a good decomposition of $T$, the following statements hold:
\begin{itemize}
    \item $\mu^-=\int_\Gamma \delta_{\gamma(0)}d\eta(\gamma),\mu^+=\int_\Gamma \delta_{\gamma(\infty)}d\eta(\gamma) $.
    \item If $T=\rect\left(M, \theta, \xi\right)$ is rectifiable, then
    \begin{equation}
    \label{eqn: theta}
        \theta(x)=\eta(\{\gamma\in \Gamma: x\in Im(\gamma)\})
    \end{equation}
    for $\mathcal{H}^1$-a.e. $x\in M$.
    \item For every $\tilde{\eta}\changetext{\leq} \eta$, the representation
    \[\tilde{T}=\int_\Gamma I_\gamma d\tilde{\eta}(\gamma)\]
    is a good decomposition of $\tilde{T}$. Moreover, if $T=\rect\left(M, \theta, \xi\right)$ is rectifiable, then $\tilde{T}$ can be written as $\tilde{T}=\rect\left(M, \tilde{\theta}, \xi\right)$ with $\tilde{\theta}(x)\le \min\{\theta(x), \tilde{\eta}(\Gamma)\} $ for $\mathcal{H}^1$-a.e. $x\in M$.
\end{itemize}
\end{theorem}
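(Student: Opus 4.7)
The plan is to prove the theorem in two stages: first establish that any finite-cost $\alpha$-optimal transport path is acyclic, then invoke the Smirnov-type decomposition theorem for acyclic normal 1-currents to obtain the desired measure $\eta$, and finally verify the listed properties by direct computation.

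For acyclicity, I would argue by contradiction: if there were a non-trivial normal 1-current $S$ with $\partial S=0$ and $\mathbf{M}(T)=\mathbf{M}(T-S)+\mathbf{M}(S)$, then the mass-additivity (together with rectifiability of $T$) would force $S$ to be rectifiable, supported on $M$, oriented compatibly with $\xi$, and to have multiplicity $\rho\le\theta$ $\mathcal{H}^1$-a.e.\ on $M$. Then $T-S$ is still a transport path from $\mu^-$ to $\mu^+$ since $\partial S=0$, and by strict concavity of $t\mapsto t^\alpha$ for $\alpha<1$,
\[
\mathbf{M}_\alpha(T-S)=\int_M(\theta-\rho)^\alpha\,d\mathcal{H}^1<\int_M\theta^\alpha\,d\mathcal{H}^1=\mathbf{M}_\alpha(T)
\]
wherever $\rho>0$, contradicting optimality of $T$.

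For the existence of a good decomposition, I would invoke Smirnov's representation theorem as refined in \cite{paolini2012}: every acyclic normal 1-current admits a finite Borel measure $\eta$ on non-constant simple curves satisfying (\ref{eqn: T_eta}) together with the mass-additivity equalities (a) and (b). The listed properties then follow by direct computation. Applying $\partial$ to the decomposition gives $\partial T=\int_\Gamma(\delta_{\gamma(\infty)}-\delta_{\gamma(0)})\,d\eta(\gamma)$, and since (b) precludes cancellation between the two endpoint push-forwards, we may separately identify $\mu^+=\int_\Gamma\delta_{\gamma(\infty)}\,d\eta$ and $\mu^-=\int_\Gamma\delta_{\gamma(0)}\,d\eta$. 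The density formula (\ref{eqn: theta}) comes from disintegrating $|T|$ with respect to $\mathcal{H}^1\mres M$: condition (a) guarantees that at $\mathcal{H}^1$-a.e.\ $x\in M$ the density of $|T|$ equals the $\eta$-mass of curves passing through $x$. For a sub-measure $\tilde{\eta}\le\eta$, the inequalities analogous to (a) and (b) for the induced current $\tilde{T}$ hold trivially by monotone integration; applying the additivity axiom to the splitting $\eta=\tilde{\eta}+(\eta-\tilde{\eta})$ upgrades these inequalities to equalities, showing that $\tilde{\eta}$ is itself a good decomposition of $\tilde{T}$. The bound $\tilde{\theta}(x)\le\min\{\theta(x),\tilde{\eta}(\Gamma)\}$ is then immediate from the multiplicity formula applied to $\tilde{T}$.

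The main obstacle, as I see it, is Smirnov's decomposition itself: it requires a delicate measurable selection argument along the flow-like structure of an acyclic 1-current and is well beyond a short sketch. Since the statement is explicitly attributed to \cite{paolini2012, colombo}, my proposal treats that result as a black box and focuses on verifying acyclicity and on deriving the listed corollary properties from the decomposition.
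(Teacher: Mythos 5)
The paper itself supplies no proof of this statement: it is quoted verbatim from the literature, with acyclicity attributed to \cite[Theorem 10.1]{paolini} and the decomposition and its properties to \cite[Theorem 5.1]{paolini2012} and \cite[Proposition 3.6]{colombo}. So there is no internal argument to compare against, and your proposal --- black-boxing the Smirnov/Paolini--Stepanov decomposition while deriving the rest --- is consistent with how those references proceed. Your derivations of the three bullet points are correct and are essentially the proofs in \cite{colombo}: equality in (b) forces the two endpoint push-forwards $\int_\Gamma \delta_{\gamma(0)}d\eta$ and $\int_\Gamma \delta_{\gamma(\infty)}d\eta$ to be mutually singular, so they must coincide with the Jordan components $\mu^-$ and $\mu^+$ of $\partial T$; equality in (a) upgrades the inequality $\|T\|\le \int_\Gamma \|I_\gamma\|\,d\eta$ to an equality of measures, after which a Fubini argument identifies the density and yields $\theta(x)=\eta(\{\gamma: x\in \im(\gamma)\})$ for $\mathcal{H}^1$-a.e.\ $x\in M$; and splitting $\eta=\tilde{\eta}+(\eta-\tilde{\eta})$ and using subadditivity of mass squeezes the sub-decomposition inequalities into equalities, giving (a) and (b) for $\tilde{T}$, with the multiplicity bound then immediate. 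The one place where your sketch is thinner than it reads is the acyclicity step: the assertion that $\mathbf{M}(T)=\mathbf{M}(T-S)+\mathbf{M}(S)$ together with rectifiability of $T$ forces the cycle $S$ (which in the definition is only a normal current, not a priori rectifiable) to be carried by $M$, oriented by $\xi$, with density $\rho\le\theta$, is itself a nontrivial ``subcurrent'' structure lemma, and it is precisely part of the Paolini--Stepanov machinery you are citing; granted that lemma, your conclusion follows from strict monotonicity of $t\mapsto t^\alpha$ (not strict concavity), and note this strictness requires $\alpha>0$. Since you explicitly treat that literature as a black box, this is a presentational caveat rather than a genuine gap.
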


We now introduce the following notations.
\changetext{
Let $T^*=\underline{\underline{\tau}}(W,\varphi,\zeta)\in Path(\mu^*, \nu^*)$ be given as in (\ref{eqn: T-star}), and $\eta$ be a good decomposition of $T^*$. Denote
\[\tilde{\mu}=\mu-\mu^* \text{ and } \tilde{\nu}=\nu-\nu^*.\]
For any $x\in W$, let us denote by  $\Gamma(x)$ the set of simple curves $\gamma\in \Gamma$ such that $x\in Im(\gamma)$. By equation (\ref{eqn: theta}), $\varphi(x)=\eta(\Gamma(x))$ for $\mathcal{H}^1$-a.e. $x\in W$.

\begin{proposition}
\label{prop: main_prop}
For any $x\in W$ with $\varphi(x)=\eta(\Gamma(x))>0$, denote
\[\bar{\mu}_x=(p_0)_{\#}\left(\eta \mres{\Gamma(x)}\right) \text{ and } \bar{\nu}_x=(p_\infty)_{\#}\left(\eta \mres{\Gamma(x)}\right)\]
where $p_0$ and $p_\infty$ are projections given in (\ref{eqn: projections}).
Let 
\[\tilde{\mu}=\tilde{\mu}_x^{ac}+\tilde{\mu}_x^s, \; \tilde{\mu}_x^{ac} \ll \bar{\mu}_x, \; \tilde{\mu}_x^{s} \perp \bar{\mu}_x \]
be the Lebesgue-Radon-Nikod\'ym decomposition of $\tilde{\mu}$ with respect to $\bar{\mu}_x$, and
\[\tilde{\nu}=\tilde{\nu}_x^{ac}+\tilde{\nu}_x^s, \; \tilde{\nu}_x^{ac} \ll \bar{\nu}_x, \; \tilde{\nu}_x^{s} \perp \bar{\nu}_x\]
be the Lebesgue-Radon-Nikod\'ym decomposition of
$\tilde{\nu}$ with respect to $\bar{\nu}_x$.
Then
\begin{enumerate}
    \item[(a)] There exist $p_x\in W $, $q_x\in W $, $m_x\ge 0$, and $n_x\ge 0$
such that
\[\tilde{\mu}_x^{ac}=m_x \delta_{p_x} \text{ and } \tilde{\nu}_x^{ac}=n_x \delta_{q_x}.\]
\item[(b)] If $m_x>0$, then $\bar{\mu}_x(\{p_x\})>0$ and there exists a Lipschitz curve $\gamma^-_x$ from $p_x$ to $x$ such that $\varphi(y)\ge \bar{\mu}_x(p_x)$ for $\mathcal{H}^1$-a.e. $y\in Im(\gamma^-_x)$.
\item[(c)] If $n_x>0$, then $\bar{\nu}_x(\{q_x\})>0$ and there exists a Lipschitz curve $\gamma^+_x$ from $x$ to $q_x$ such that $\varphi(y)\ge \bar{\nu}_x(q_x)$ for $\mathcal{H}^1$-a.e. $y\in Im(\gamma^+_x)$.
\item[(d)] At least one of $m_x$ and $n_x$ is zero.
\end{enumerate}
\end{proposition}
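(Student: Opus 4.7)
The plan is to deduce each statement from Theorem \ref{thm: S is zero}, by constructing, under the negation hypothesis, a non-zero rectifiable 1-current $S$ on $T^*$ with $\partial(T^*\pm S)\preceq \nu-\mu$. The universal tool is the good decomposition $\eta$ of $T^*$: for any Borel set $A\subseteq \Gamma$, integration of truncated currents of the form $I_{\gamma|_{[a,b]}}$ against $\eta\mres A$ produces a subcurrent of $T^*$ with multiplicity controlled by $\varphi$ and boundary expressible via the $p_0$- and $p_\infty$-push-forwards.

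I first dispose of (b), with (c) being symmetric. Given $m_x>0$, the absolute continuity $\tilde\mu^{ac}_x=m_x\delta_{p_x}\ll \bar\mu_x$ forces $\bar\mu_x(\{p_x\})>0$, so $A:=\{\gamma\in\Gamma(x):\gamma(0)=p_x\}$ has $\eta(A)=\bar\mu_x(\{p_x\})>0$. Since $T^*$ has finite $\mathbf{M}_\alpha$-cost, it is acyclic, which precludes two distinct oriented paths from $p_x$ to $x$ on $W$; hence the front portions $\gamma|_{[0,t_x(\gamma)]}$ of all $\gamma\in A$ (with $t_x(\gamma)$ the first time $\gamma$ hits $x$) have a common image, defining a single Lipschitz curve $\gamma^-_x$ from $p_x$ to $x$. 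The multiplicity formula $\varphi(y)=\eta(\Gamma(y))$ immediately gives $\varphi(y)\geq \eta(A)=\bar\mu_x(\{p_x\})$ along $\gamma^-_x$.

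For (a) (source side; destination symmetric), I argue by contradiction and suppose $\tilde\mu^{ac}_x$ is not of the form $m_x\delta_{p_x}$. Writing $f:=d\tilde\mu^{ac}_x/d\bar\mu_x$, one picks disjoint Borel sets $E_1,E_2\subset W$ and $\epsilon>0$ with $\bar\mu_x(E_i)>0$ and $f\geq \epsilon$ on $E_i$, using that the support of the positive measure $\tilde\mu^{ac}_x$ contains more than one point. Setting $A_i:=\{\gamma\in\Gamma(x):\gamma(0)\in E_i\}$ and choosing $\epsilon_i\in (0,\epsilon]$ with $\epsilon_1\bar\mu_x(E_1)=\epsilon_2\bar\mu_x(E_2)$, define
\[
S_i:=\int_{A_i} I_{\gamma|_{[0,t_x(\gamma)]}}\,d(\epsilon_i \eta)(\gamma),\qquad S:=S_1-S_2.
\]
Then $S$ is on $T^*$ by the good-decomposition multiplicity bound (since $\epsilon_i\leq 1$), and $\partial S=\epsilon_2\bar\mu_x\mres E_2-\epsilon_1\bar\mu_x\mres E_1\neq 0$ because the two terms have disjoint supports of positive $\bar\mu_x$-mass. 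The inequality chain $\epsilon_i\bar\mu_x\mres E_i\leq f\,\bar\mu_x\mres E_i\leq \tilde\mu^{ac}_x\mres E_i\leq \tilde\mu$ together with $\bar\mu_x\leq \mu^*$ makes both $\partial(T^*\pm S)\preceq \nu-\mu$ direct, and Theorem \ref{thm: S is zero} then forces the contradiction $S=0$.

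Finally, for (d), suppose both $m_x>0$ and $n_x>0$. Concatenate $\gamma^-_x$ from (b) and $\gamma^+_x$ from (c) into a Lipschitz curve $\beta$ from $p_x$ to $q_x$ through $x$, set $\sigma:=\min\{m_x,n_x,\bar\mu_x(\{p_x\}),\bar\nu_x(\{q_x\})\}>0$, and take $S:=\sigma I_\beta$. Then $S$ is on $T^*$ by the multiplicity bounds of (b) and (c); the atom inequalities $\tilde\mu(\{p_x\})\geq m_x\geq \sigma$ and $\tilde\nu(\{q_x\})\geq n_x\geq \sigma$ yield $\partial(T^*\pm S)\preceq \nu-\mu$; and Theorem \ref{thm: S is zero} forces $S=0$, contradicting $\sigma>0$. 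The hardest step is verifying the boundary-perturbation constraint in (a) without any atomic structure on $\tilde\mu$; this is where the Radon-Nikod\'ym lower bound $f\geq \epsilon$ on $E_i$ is essential, converting the non-atomic mass shift into a uniformly bounded measure comparison.
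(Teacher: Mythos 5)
Your proposal is correct and follows essentially the same route as the paper: all four parts are reduced to Theorem \ref{thm: S is zero} by integrating front/back portions of curves against the good decomposition $\eta\mres\Gamma(x)$, with (b), (c) via the acyclicity/common-curve argument and (d) via the current supported on $\gamma_x^-$ and $\gamma_x^+$ with weight $\sigma=\min\{\bar\mu_x(\{p_x\}),\bar\nu_x(\{q_x\}),m_x,n_x\}$. The only difference is cosmetic: in (a) you build the balanced perturbation from two disjoint sets where the Radon--Nikod\'ym density is bounded below by $\epsilon$ (with the implicit normalization $\epsilon\le 1$), whereas the paper truncates the density by $\min\{g,1\}$ and splits by a set $R$ and its complement; and in (d) the paper uses the sum $I_{\gamma^-_x}+I_{\gamma^+_x}$ rather than the concatenated curve, which avoids any question of the concatenation being simple.
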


\begin{proof}
Note that
\[\bar{\mu}_x(X)=(p_0)_{\#}\left(\eta \mres {\Gamma(x)}\right)(X)=(\eta \mres {\Gamma(x)})(p_0^{-1}(X))=\eta(\Gamma(x))=\varphi(x)>0,\]
and $\bar{\mu}_x\leq (p_0)_{\#}\eta=\mu^*$.
For the sake of contradiction, we assume that $\tilde{\mu}_x^{ac}$ is non-zero and not a multiple of a Dirac mass. Then there exists a Borel measurable
set $R$ such that
\begin{equation*}
\tilde{\mu}_x^{ac}(R)>0 \text{ and } \tilde{\mu}_x^{ac}(X\setminus R)>0.
\end{equation*}%
Since $\tilde{\mu}_x^{ac} \ll \bar{\mu}_x$, there exists a non-negative function $g\in \mathcal{L}^1(X,\bar{\mu}_x)$ such that $\tilde{\mu}_x^{ac}= g\cdot\bar{\mu}_x$. We define
\[\mu_0:=\min\{g(\cdot), 1\} \cdot\bar{\mu}_x. \]
Still we have $\mu_0(R)>0$ and $\mu_0(X\setminus R)>0$. Without loss of generality, we may assume that $0<\mu_0(R)\le \mu_0(X\setminus R)$.
Setting \[\mu_1=\mu_0\mres R \text{ and } \mu_2=\frac{\mu_0(R)}{\mu_0(X\setminus R)}\left(\mu_0\mres (X\setminus R)\right)\]
yields two positive measures $\mu _{1}$ and $\mu _{2}$ such that
$\mu _{1}$ is concentrated on $R$ and $\mu _{2}$ is concentrated on $%
X\setminus R$ with equal mass.
Moreover, since both $\mu_1$ and $\mu_2$ are absolutely continuous with respect to $\bar{\mu}_x$, there exist two non-negative $\bar{\mu}_x$-integrable functions $\rho_1 $ and $\rho_2$ such that
\[\mu_1=\rho_1 \bar{\mu}_x \text{ and } \mu_2=\rho_2 \bar{\mu}_x.\]
Let $\rho=\rho_1-\rho_2$. Note that $\abs{\rho}\le 1$. From the construction of $\mu_1$ and $\mu_2$, we have
\[\bar{\mu}_x(\rho)=\int_X \rho d \bar{\mu}_x=\int_X \rho_1 d \bar{\mu}_x-\int_X \rho_2 d \bar{\mu}_x=0,\]
and 
\[\int_X \abs{\rho} d \bar{\mu}_x=2\int_X \rho_1 d \bar{\mu}_x=2 \tilde{\mu}_x^{ac}(R)>0.\]

Now, for any $\gamma\in \Gamma(x)$, let $\gamma^-$ be the part of $\gamma$ from $\gamma(0)$ to $x$.
Define
    \[S:=\int_{\Gamma(x)} \rho(\gamma(0))I_{\gamma^-} d\eta(\gamma).\]
Then for any smooth function $f$ with a compact support, we have
    \begin{eqnarray*}
    \partial S(f)&=&S(df)\\
        &=&\int_{\Gamma(x)} \rho(\gamma(0))I_{\gamma^-}(df) d\eta(\gamma)\\
    &=&\int_{\Gamma(x)} \rho(\gamma(0))\partial(I_{\gamma^-})(f) d\eta(\gamma)\\
    &=&\int_{\Gamma(x)} \rho(\gamma(0))(f(x)-f(\gamma(0)) d\eta(\gamma)\\
    &=&\int_{\Gamma(x)} \rho(\gamma(0))f(x) d\eta(\gamma)-\int_{\Gamma(x)} \rho(\gamma(0))f(\gamma(0)) d\eta(\gamma)\\
    &=&f(x)\bar{\mu}_x(\rho)-(\rho\bar{\mu}_x)(f)=-(\rho\bar{\mu}_x)(f).
    \end{eqnarray*}
    Therefore, $\partial S=-\rho \bar{\mu}_x \ne 0$.
    
Since $T^*$ is rectifiable, by construction $S$ is also rectifiable. We may write it as $S=\underline{\underline{\tau}}(M_S,\theta_S,\xi_S)$ for some $M_S\subseteq W$. At $\mathcal{H}^1$-a.e. $y\in M_S$,
\[\theta_S(y)\le \int_{\Gamma(x)\cap \Gamma(y)} \abs{\rho(\gamma(0))}d\eta(\gamma)\le \int_{\Gamma(x)\cap \Gamma(y)} 1 d\eta(\gamma)\le \int_{\Gamma(y)} 1 d\eta(\gamma)=\varphi(y).\]
This shows that $S$ is on $T^*$ in the sense of Definition \ref{def: S_on_T}. 

We now show that $\partial(T^* \pm S)\leq \nu-\mu$. Given
\[\partial(T^* \pm S)=\nu^*-\mu^* \pm (\rho \bar{\mu}_x)=\nu^*-(\mu^* \mp (\rho \bar{\mu}_x)),\] it is sufficient to show that $\mu^* \mp (\rho \bar{\mu}_x)\leq \mu$ as (positive) measures, which is the case provided that $\bar{\mu }_x \leq \mu ^*$ and $\abs{\rho}\le 1$. Also,
\[\mu-\left(\mu^* \mp (\rho \bar{\mu}_x)\right)=\mu-\mu^* \pm (\rho \bar{\mu}_x)=\tilde{\mu}\pm (\mu_1-\mu_2)\] are positive measures because $\mu_1\leq \bar{\mu}_x^{ac}\leq \tilde{\mu}$ and similarly $\mu_2\leq \tilde{\mu}$. As a result, $\partial(T^* \pm S)\preceq \nu-\mu$. By Theorem \ref{thm: S is zero}, $S$ is zero which contradicts $\partial S\neq 0$. Therefore, $\bar{\mu}_x^{ac}$ must be in the form of $m_x \delta_{p_x}$ for some $m_x\ge 0$ and $p_x\in W$. Similarly, we have $\bar{\nu}_x^{ac}=n_x\delta_{q_x}$ for some $n_x\ge 0$ and $q_x\in W$. This proves part (a).

Now assume that $m_x>0$. Since $\tilde{\mu}_x^{ac}=m_x \delta_{p_x}\ll \bar{\mu}_x$, we have $\bar{\mu}_x(\{p_x\})>0$. That is,
\[0<(p_0)_{\#}\left(\eta \lfloor_{\Gamma(x)}\right)(\{p_x\})= \eta (\{\gamma\in \Gamma(x): \gamma(0)=p_x\}).\]
Because $T^*$ is acyclic and $\eta$ is a good decomposition of $T^*$, for $\eta$-a.e. $\gamma\in \Gamma(x)$ with $\gamma(0)=p_x$, the image $Im(\gamma)$ of $\gamma$ shares a common Lipschitz curve $\gamma^-_x$ in $W$ from $p_x$ to $x$. For $\mathcal{H}^1$-a.e. $y$ on $Im(\gamma^-_x)$,
\[\varphi(y)= \eta (\{\gamma\in \Gamma(y)\})\ge \eta (\{\gamma\in \Gamma(x): \gamma(0)=p_x\})=\bar{\mu}_x(\{p_x\}). \]
This proves part (b). Similar arguments lead to part (c).

Suppose by contradiction that both $m_x>0$ and $n_x>0$. Then, by parts (b) and (c), we consider the rectifiable 1-current
\[S_x:=\sigma \left(I_{\gamma^-_x}+I_{\gamma^+_x}\right),\]
for $\sigma=\min\{\bar{\mu}_x(\{p_x\}), \bar{\nu}_x(\{q_x\}), m_x, n_x\}>0$. Clearly, $S_x$ is on $T^*$, non-zero, and $\partial(T^* \pm S_x) \preceq \nu-\mu$. This contradicts Theorem \ref{thm: S is zero}. Therefore, at least one of $m_x$ and $n_x$ is zero.
\end{proof}

To derive the generalized version of Theorem \ref{thm: atomic}, we introduce the concept of path-connectivity on rectifiable 1-currents as follows.

\begin{definition}
\label{def: path-connected} Let $T=\underline{\underline{\tau}}(M,\theta,\xi)$ be a rectifiable 1-current. For any two points $x_1, x_2 \in X$,
we say $x_1$ and $x_2$ are $T$-path-connected if there exists a Lipschitz curve $\gamma:[0,1]\rightarrow X$ such that $\gamma(0)=x_1, \gamma(1)=x_2$, 
$\mathcal{H}^1(Im(\gamma)\setminus M)=0$, and there exists a number $c>0$ such that $\theta(z)\ge c$ for $\mathcal{H}^1$-a.e. $z\in Im(\gamma)\cap M$. 
\end{definition}
The $T$-path-connectivity defines an equivalence relation on $X$.  A rectifiable 1-current $T=\underline{\underline{\tau}}(M,\theta,\xi)$ is called path-connected if every two points on $M$ are $T$-path-connected.

For any $T$-path-connected component $M'$ of $M$, we consider the restriction $T\mres M'$ of $T$ on $M'$. $T\mres M'$ is zero if $M'$ contains only one point. In this case, we say that the component $M'$ is degenerate.   When $M'$ is non-degenerate, i.e., it contains at least two distinct points $x_1$ and $x_2$, we have
\[\mathbf{M}(T\mres M')=\int_{M'}\theta d\mathcal{H}^1\ge c \mathcal{H}^1(Im(\gamma))>0\] 
using the notations given in Definition \ref{def: path-connected}. Since $\mathbf{M}(T)<\infty$, $M$ has at most countably many non-degenerate $T$-path-connected components.

Observe that non-degenerate components may fail to exist even if $\mathbf{M}(T)>0$. For instance, let $C$ (e.g., a fat-Cantor set) be a nowhere dense subset of $[0,1]$ with $0<\mathcal{H}^1(C)<1$. Then, for $S=\underline{\underline{\tau}}(C,\chi_C,1)$, each $S$-path-connected component is degenerate. Luckily, the following lemma indicates that each non-zero $\alpha$-optimal transport path has at least one non-degenerate path-connected component.

\begin{lemma}
\label{lemma: path_connected}
 Let $T=\underline{\underline{\tau}}(M,\theta,\xi)$ be a non-zero $\alpha$-optimal transport path for some $0<\alpha<1$. Then \[T=\sum_{i\in J} T\mres M_i,\] where $\{M_i: i\in J\}$ are the collection of all non-degenerate $T$-path-connected components of $M$, and $J$ is a non-empty countable set.
\end{lemma}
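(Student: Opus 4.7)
The plan is to reduce the statement to proving that $\mathcal{H}^1$-a.e.\ $x \in M$ belongs to some non-degenerate $T$-path-connected component. Once this is established, setting $M_0 := M \setminus \bigcup_{i\in J} M_i$, one has $\mathcal{H}^1(M_0)=0$, hence $T\mres M_0 = 0$ and therefore $T = \sum_{i \in J} T \mres M_i$ by summing over the at most countably many non-degenerate components. Countability of $J$ is already recorded in the paragraph just before the lemma, and non-emptiness of $J$ will follow from $T \neq 0$ (which forces $\mathcal{H}^1(\{x\in M:\theta(x)>0\}) > 0$) combined with the same a.e.\ statement.

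To prove the a.e.\ statement, I would start from the good decomposition $T = \int_\Gamma I_\gamma \, d\eta(\gamma)$, which is concentrated on non-constant simple Lipschitz curves, together with the identity $\theta(x) = \eta(\Gamma(x))$ from the theorem on good decompositions quoted above. For $\mathcal{H}^1$-a.e.\ $x\in M$ one has $\theta(x)>0$, so there exists some $\gamma$ in the support of $\eta$ with $x\in \im(\gamma)$; a short sub-arc of $\gamma$ around $x$ is then the natural candidate Lipschitz curve for witnessing $T$-path-connectivity of $x$ to nearby points of $M$.

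The critical step is to extract a uniform positive lower bound on $\theta$ along this sub-arc, as required by Definition \ref{def: path-connected}. The cleanest route is to invoke the interior regularity of $\alpha$-optimal transport paths recorded in \cite{xia2015motivations}: at $\mathcal{H}^1$-a.e.\ point $x\in M$, the support $M$ locally agrees with a straight segment through $x$ carrying the constant multiplicity $\theta(x)>0$. Such a segment simultaneously supplies the Lipschitz parametrization and the lower bound $\theta\geq \theta(x)/2>0$, putting $x$ in a non-degenerate component and closing the argument.

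The main obstacle I anticipate is making precise the interior-regularity input and verifying that its exceptional set (the finite boundary mass, the countable set of branching/vertex points, and points where the approximate tangent degenerates) really is $\mathcal{H}^1$-null in $M$. If citing this directly is inconvenient, a fallback plan is a Fubini-Lebesgue argument: apply the Lebesgue point theorem to $\theta$ with respect to $\mathcal{H}^1\mres M$ to obtain approximate continuity of $\theta$ at $\mathcal{H}^1$-a.e.\ $x\in M$, and then for $\eta$-a.e.\ $\gamma$ and $\mathcal{L}^1$-a.e.\ parameter $t$ select a short sub-arc of $\gamma$ around $\gamma(t)$ lying, up to an $\mathcal{H}^1$-null error, in the thick set $\{y\in M: \theta(y)\geq \theta(\gamma(t))/2\}$. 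This would give the required lower bound and hence the decomposition, at the cost of additional measure-theoretic bookkeeping.
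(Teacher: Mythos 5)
Your overall reduction (show that $\mathcal{H}^1$-a.e.\ point of $M$ lies in a non-degenerate component, then discard the null remainder and sum over the countably many non-degenerate components) matches the paper's strategy, but the key regularity input you rely on has a genuine gap. The interior regularity of optimal transport paths only describes $M$ away from $spt(\mu^+)\cup spt(\mu^-)$, and when the boundary measures are diffuse this excluded region can meet $M$ in a set of positive, even full, $\mathcal{H}^1$-measure: think of transporting $\mathcal{H}^1$ restricted to a segment to a single Dirac mass, where the optimal path runs along the segment itself and the multiplicity grows continuously along it. At such points the statement you invoke --- that $M$ is locally a straight segment carrying the \emph{constant} multiplicity $\theta(x)$ --- is simply false, and the exceptional set for interior regularity is not $\mathcal{H}^1$-null, which is exactly the obstacle you flag but do not resolve. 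Your fallback does not close it either: approximate continuity of $\theta$ with respect to $\mathcal{H}^1\mres M$ only says that the bad set $\{y\in M:\theta(y)<\theta(x)/2\}$ has vanishing \emph{density} at $x$, so a sub-arc of a decomposition curve of length $2r$ may still meet it in a set of positive $\mathcal{H}^1$-measure (merely $o(r)$); Definition \ref{def: path-connected} requires $\theta\ge c$ at $\mathcal{H}^1$-a.e.\ point of the curve's image, not just on most of it, and one cannot excise the bad portions without disconnecting the curve. Absent a structural result, one also cannot rule out, from the good decomposition alone, that the curves through $x$ split immediately into branches of arbitrarily small $\eta$-weight, in which case no single sub-arc carries a uniform lower bound.

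The paper avoids all of this by citing the boundary regularity of optimal paths, \cite[Proposition 4.3]{xia_boundary}: for \emph{every} point $p$ with $\theta(p)>0$ (boundary points included), there is a ball $B_r(p)$ in which the superlevel set $M_{\theta(p)}$ is contained in the support of a bi-Lipschitz chain $Q_p$ emanating from $p$, whose support in turn lies in $M_{\theta(p)/2}$. The curves of $Q_p$ are exactly the Lipschitz curves with the a.e.\ lower bound $\theta\ge\theta(p)/2$ demanded by Definition \ref{def: path-connected}, so every point with $\theta(p)>0$ --- not merely a.e.\ point --- lies in a non-degenerate component, and the decomposition follows. To repair your proof you should replace both your interior-regularity step and the Lebesgue-point fallback by this superlevel-set result (or an equivalent statement giving a curve from $p$ with a uniform multiplicity lower bound valid up to the boundary).
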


To prove Lemma \ref{lemma: path_connected}, we first recall the notation of superlevel set as introduced in \cite{xia_boundary}: For any $\lambda>0$, the $\lambda$-superlevel set of a rectifiable current $T=\underline{\underline{\tau}}(M,\theta,\xi)$ is the set
\[M_{\lambda}:=\{p\in M: \theta(p)\ge \lambda\}.\]

\begin{lemma}(\cite[Proposition 4.3]{xia_boundary})
 Let $T=\underline{\underline{\tau}}(M,\theta,\xi)$ be any $\alpha$-optimal transport path. Then for any $\sigma_1>\sigma_2>0$ and any $p\in M_{\sigma_1}$, there exists an open ball neighborhood $B_r(p)$ of $p$ such that 
\begin{equation}
\label{eqn: superlevel}
   M_{\sigma_1}\cap B_r(p)\subseteq \text{ the support of } Q_p\subseteq M_{\sigma_2}\cap B_r(p), 
\end{equation}
where $Q_p=\sum_{i=1}^K m_i\Gamma_i$ is a bi-Lipschitz chain.
\end{lemma}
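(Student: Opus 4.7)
The plan is to exploit the interior regularity of $\mathbf{M}_\alpha$-minimizers together with the good Smirnov decomposition used elsewhere in the paper. The key assertion one needs is that in a sufficiently small ball $B_r(p)$ the support of $T$ looks like a finite bi-Lipschitz graph emanating from $p$, with constant multiplicity along each edge. Once this is established, defining $Q_p$ amounts to retaining the edges with large enough multiplicity.

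Concretely, fix $p\in M_{\sigma_1}$ and let $\eta$ be a good decomposition of $T$ (which exists because $T$, being $\alpha$-optimal of finite $\mathbf{M}_\alpha$ cost, is acyclic). From $\theta(p)=\eta(\Gamma(p))\ge\sigma_1$, a positive-mass bundle of Smirnov curves passes through $p$. Acyclicity together with $\mathbf{M}_\alpha$-minimality forces these curves to cluster into finitely many sub-bundles near $p$, each sub-bundle traveling along a single bi-Lipschitz arc emanating from $p$ before the next branching event. The finiteness and a uniform bound on the number of emanating arcs follow from the uniform upper bound on the degree of interior vertices and the uniform lower bound on angles between edges at each vertex quoted from \cite{xia2015motivations}. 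Shrinking $r$ if necessary, one may further arrange that $B_r(p)$ contains no branching vertex other than $p$, so that $spt(T)\cap B_r(p)$ is exactly the union of finitely many bi-Lipschitz arcs $\Gamma_1,\dots,\Gamma_L$ meeting at $p$, and on each $\Gamma_i$ the multiplicity $\theta$ is constant equal to some $\mu_i$.

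Now set $J:=\{i:\mu_i\ge\sigma_2\}$ and define $Q_p:=\sum_{i\in J}\mu_i\,\Gamma_i$, relabeled as $\sum_{i=1}^K m_i\Gamma_i$ with $m_i\ge\sigma_2$, which is a bi-Lipschitz chain by construction. The inclusion $spt(Q_p)\subseteq M_{\sigma_2}\cap B_r(p)$ is immediate because along each retained arc $\Gamma_i$ one has $\theta\equiv m_i\ge\sigma_2$ and $\Gamma_i\subset B_r(p)$. For the reverse inclusion, any $q\in M_{\sigma_1}\cap B_r(p)$ lies on $spt(T)\cap B_r(p)$, hence on some arc $\Gamma_j$ by the local graph description; but $\theta(q)\ge\sigma_1$ and $\theta\equiv\mu_j$ along $\Gamma_j$ together give $\mu_j\ge\sigma_1>\sigma_2$, so $j\in J$ and $q\in spt(Q_p)$.

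The principal obstacle is the local bi-Lipschitz graph description of $spt(T)\cap B_r(p)$ with a uniform finite bound on the number of arcs meeting at $p$. This is an interior regularity statement for $\mathbf{M}_\alpha$-minimizers whose proof combines the density/monotonicity estimates for $\mathbf{M}_\alpha$ with a pigeonhole argument on the angles at $p$, essentially the same ingredients that underlie the degree and angle bounds quoted in \cite{xia2015motivations}. Ruling out accumulation of branching points at $p$ is the most delicate part and is what permits $r$ to be chosen strictly positive.
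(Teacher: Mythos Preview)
The paper does not actually prove this lemma; it is quoted verbatim as \cite[Proposition 4.3]{xia_boundary} and used as a black box in the proof of Lemma~\ref{lemma: path_connected}. So there is no in-paper proof to compare against, and the relevant question is whether your sketch is a plausible outline of the argument in \cite{xia_boundary}.

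Your reduction step is correct: once you know that $spt(T)\cap B_r(p)$ is a finite union of bi-Lipschitz arcs through $p$ with constant multiplicity on each arc, selecting $Q_p$ by thresholding at $\sigma_2$ gives the two inclusions immediately. The gap is in the sentence ``Shrinking $r$ if necessary, one may further arrange that $B_r(p)$ contains no branching vertex other than $p$, so that \dots\ the multiplicity $\theta$ is constant equal to some $\mu_i$.'' This is exactly what one \emph{cannot} arrange in general, and it is why the result lives in the boundary regularity paper \cite{xia_boundary} rather than the interior regularity paper \cite{xia2}. For an $\alpha$-optimal path between non-atomic measures, branch points accumulate at every point of $spt(\mu^{\pm})$; a point $p\in M_{\sigma_1}$ may well lie on the support of the boundary measures, and then every neighborhood of $p$ contains infinitely many branch points and $\theta$ is genuinely non-constant along each outgoing arc. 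The uniform degree and angle bounds you cite from \cite{xia2015motivations} control branching at a \emph{single} vertex, not accumulation of vertices.

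The actual argument in \cite{xia_boundary} does not attempt to isolate $p$ from other branch points. Instead it works directly with the superlevel sets: one shows that even though $\theta$ varies along each arc, the drop in multiplicity across any branch point is controlled (small branches carry small mass because of the angle bound and a monotonicity-type estimate), so the set $\{\theta\ge\sigma_2\}$ near $p$ is itself a finite bi-Lipschitz graph with a universally bounded number $K$ of arcs. The chain $Q_p$ is then built from these arcs with suitable constant coefficients $m_i$, not from arcs of constant $\theta$. Your final paragraph correctly identifies ruling out accumulation as ``the most delicate part,'' but the resolution is not to rule it out --- it is to prove the bi-Lipschitz chain structure of the superlevel set \emph{despite} accumulation.
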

Here, as stated in \cite[Corollary 4.2]{xia_boundary}, each $\Gamma_i$ is a bi-Lipschitz curve from $p$. These bi-Lipschitz curves $\Gamma_i$ are pairwise disjoint except at their common endpoint $p$, and $K$ is a universal constant.

\begin{proof}[Proof of Lemma \ref{lemma: path_connected}:]
Let $\{M_i: i\in J\}$ be the collection of all non-degenerate $T$-path-connected components of $M$, where $J$ is countable.  For any $p\in M$ with $\theta(p)>0$, let $\sigma_1=\theta_p$ and $\sigma_2=\frac{1}{2}\sigma_1$. By (\ref{eqn: superlevel}), any point on the support of the bi-Lipschitz curve $Q_p$ is $T$-path-connected with $p$. Hence, $p$ belongs to some non-degenerate $T$-path-connected component $M_i$ for some $i\in J$.
As a result, we decompose 
$M_+=\{x\in M: \theta(x)>0\}$ as the disjoint union of $M_i$ with $i\in J$. Thus, $T=\sum_{i\in J} T\mres M_i$.
\end{proof}

We now go back to the study of $T^*$, which is also an $\alpha$-optimal transport path. Consequently, one can write
 \[T^*=\sum_{i\in J} T^*\mres W_i,\] where $\{W_i: i\in J\}$ are the collection of all non-degenerate $T^*$-path-connected components of $W$, and $J$ is a non-empty countable set if $T^*$ is non-zero.
 
\begin{lemma}
\label{lemma: same_component}
Suppose $x_1$ and $x_2$ belong to the same  non-degenerate $T^*$-path-connected component  of $W$. Then, at most one of $\{m_{x_1}, n_{x_1}, m_{x_2}, n_{x_2}\}$ is non-zero.
\end{lemma}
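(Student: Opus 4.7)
The plan is to argue by contradiction using Theorem \ref{thm: S is zero}. Suppose at least two of $\{m_{x_1}, n_{x_1}, m_{x_2}, n_{x_2}\}$ are positive. By Proposition \ref{prop: main_prop}(d), the two positive values must sit at different indices, which leaves four sub-cases. Cases where some $n_{x_i}>0$ rather than $m_{x_i}$ are handled by symmetric constructions, in which $\gamma^-_{x_i}$ is replaced by the reverse of $\gamma^+_{x_i}$ with appropriate sign changes, so I focus on (i) $m_{x_1}, m_{x_2}>0$ and (ii) $m_{x_1}, n_{x_2}>0$.

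In each case I will build a non-zero rectifiable 1-current $S$ on $T^*$ satisfying $\partial(T^*\pm S)\preceq\nu-\mu$; Theorem \ref{thm: S is zero} will then force $S=0$, contradicting the construction. The building blocks are the (simple, after pruning any back-tracking) Lipschitz curves $\gamma^-_{x_1}, \gamma^-_{x_2}, \gamma^+_{x_2}$ from Proposition \ref{prop: main_prop}(b),(c), along whose images $\varphi$ is bounded below by the positive constants $\bar{\mu}_{x_1}(\{p_{x_1}\})$, $\bar{\mu}_{x_2}(\{p_{x_2}\})$, $\bar{\nu}_{x_2}(\{q_{x_2}\})$ respectively, together with a simple Lipschitz curve $\beta$ from $x_1$ to $x_2$ in $W$ along which $\varphi\geq c>0$, extracted from the path-connecting curve in Definition \ref{def: path-connected}. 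Set
\[
S := \sigma\bigl(I_{\gamma^-_{x_1}}+I_\beta-I_{\gamma^-_{x_2}}\bigr) \quad\text{in case (i)}, \qquad S := \sigma\bigl(I_{\gamma^-_{x_1}}+I_\beta+I_{\gamma^+_{x_2}}\bigr) \quad\text{in case (ii)},
\]
so that $\partial S$ equals $\sigma(\delta_{p_{x_2}}-\delta_{p_{x_1}})$ or $\sigma(\delta_{q_{x_2}}-\delta_{p_{x_1}})$ respectively. Using $\bar{\mu}_{x_i}\leq\mu^*$, $\bar{\nu}_{x_2}\leq\nu^*$, and the atomic bounds $\tilde{\mu}(\{p_{x_i}\})\geq m_{x_i}$, $\tilde{\nu}(\{q_{x_2}\})\geq n_{x_2}$ coming from Proposition \ref{prop: main_prop}(a), the admissibility $\partial(T^*\pm S)\preceq\nu-\mu$ reduces to $\sigma$ not exceeding any of the positive constants $m_{x_i}$, $n_{x_2}$, $\bar{\mu}_{x_i}(\{p_{x_i}\})$, $\bar{\nu}_{x_2}(\{q_{x_2}\})$ that appear in the relevant case.

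The main obstacle is verifying that $S$ is on $T^*$ in the sense of Definition \ref{def: S_on_T}, since at points lying on two or all three of the constituent curves, the multiplicity of $S$ can rise to $2\sigma$ or $3\sigma$. The key observation is that on each constituent image $\varphi$ admits its own positive lower bound, so at any point $y$ in the image of $S$ the value $\varphi(y)$ is at least the minimum $c_0>0$ of those bounds. Choosing $\sigma$ equal to one-third of the minimum of $c_0$ and of the admissibility bounds listed above then guarantees $|S|(y)\leq 3\sigma\leq\varphi(y)$ for $\mathcal{H}^1$-a.e.\ $y$, making $S$ a non-zero rectifiable 1-current on $T^*$ that satisfies all hypotheses of Theorem \ref{thm: S is zero}. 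This contradicts $\sigma>0$, completing the proof.
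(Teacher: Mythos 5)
Your proposal is correct and follows essentially the same route as the paper: assume two of the quantities are positive, concatenate the curves $\gamma^-_{x_1}$ (or $\gamma^+_{x_1}$), the path-connecting curve, and $\gamma^\mp_{x_2}$ into a current $S$ on $T^*$ with $\sigma$ chosen below the relevant masses and the lower bound on $\varphi$, and invoke Theorem \ref{thm: S is zero} for a contradiction. Your extra factor of one-third to cover possible overlaps of the constituent curves is a careful refinement of the step the paper dismisses with ``clearly $S$ is on $T^*$,'' but it does not change the argument.
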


\begin{proof}
Otherwise, let us just assume $m_{x_1}>0$ and $ n_{x_2}>0$, with the proof for other cases following similarly. Since  $x_1$ and $x_2$ are path-connected on $T^*$, there exists a Lipschitz curve $\gamma_{x_1}^{x_2}$ from $x_1$ to $x_2$ such that
$\mathcal{H}^1(Im(\gamma_{x_1}^{x_2})\setminus W)=0$, and there exists a number $c>0$ such that $\varphi(z)\ge c$ for $\mathcal{H}^1$-a.e. $z\in Im(\gamma_{x_1}^{x_2})$. Now, we consider the rectifiable 1-current
\[S:=\sigma \left(I_{\gamma^-_{x_1}}+I_{\gamma_{x_1}^{x_2}}+I_{\gamma^+_{x_2}}\right),\]
for $\sigma=\min\{\bar{\mu}_{x_1}(\{p_{x_1}\}), \bar{\nu}_{x_2}(\{q_{x_2}\}), m_{x_1}, n_{x_2}, c\}>0$. Clearly, $S$ is on $T^*$, non-zero, and $\partial(T^* \pm S_x) \preceq \nu-\mu$. This contradicts Theorem \ref{thm: S is zero}. 
\end{proof}

For each $i\in J$, if there exists one point $x\in W_i$ such that $m_x>0$, then by part(b) of Proposition \ref{prop: main_prop}, the associated point $p_x$ is also $T^*$-path-connected with $x$ and hence $p_x\in W_i$. By Lemma \ref{lemma: same_component}, $m_x\delta_{p_x}$ is independent of the choice of $x\in W_i$, and thus can be represented by $m_i\delta_{p_i}$. If $m_x=0$ for all $x\in W_i$, we simply pick $p_i$ to be any fixed point in $W_i$ and set $m_i=0$. Analogously, we denote $n_x\delta_{q_x}$ by $n_i\delta_{q_i}$ for each $i\in J$. As a result, we arrive at two atomic measures
\begin{equation}
\label{eqn: measures_ab}
    \mathbf{a}=\sum_{i\in J}m_i\delta_{p_i} \text{ and } \mathbf{b}=\sum_{i\in J}n_i\delta_{q_i},
\end{equation}
where either $m_i=0$ or $n_i=0$ for each $i\in J$.

\begin{lemma} 
\label{lemma: perp}
It holds that
\[\left(\tilde{\mu}-\mathbf{a}\right) \perp \mu^*, \mathbf{a}\ll \mu^* \text{ and } \left(\tilde{\nu}-\mathbf{b}\right) \perp \nu^*, \mathbf{b}\ll \nu^*.\]
\end{lemma}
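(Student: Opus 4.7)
My plan is to prove both the $\mu$- and $\nu$-halves of the lemma by establishing $\tilde{\mu}^{ac}_{\mu^*} = \mathbf{a}$, where $\tilde{\mu}^{ac}_{\mu^*}$ denotes the absolutely continuous part of $\tilde{\mu}$ with respect to $\mu^*$ in the Lebesgue--Radon--Nikod\'ym decomposition; this identity immediately gives $\mathbf{a} \ll \mu^*$ and $\tilde{\mu} - \mathbf{a} = \tilde{\mu}^{s}_{\mu^*} \perp \mu^*$. The $\nu$-side follows by a completely symmetric argument with the roles of $(p_0, \mu^*, \bar{\mu}_x, m_x, p_x)$ replaced by $(p_\infty, \nu^*, \bar{\nu}_x, n_x, q_x)$, so I will only carry out the $\mu$-case.

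The easier inequality $\mathbf{a} \leq \tilde{\mu}^{ac}_{\mu^*}$ is essentially a repackaging of Proposition \ref{prop: main_prop}. Whenever $m_i > 0$, pick any $x \in W_i$ realizing $(p_x, m_x) = (p_i, m_i)$; Proposition \ref{prop: main_prop}(b) gives $\bar{\mu}_x(\{p_i\}) > 0$, and since $\bar{\mu}_x \leq (p_0)_\# \eta = \mu^*$ we obtain $\mu^*(\{p_i\}) > 0$, so $\mathbf{a} \ll \mu^*$. From $m_x \delta_{p_x} = \tilde{\mu}^{ac}_{\bar{\mu}_x} \leq \tilde{\mu}$ I read off $m_i \leq \tilde{\mu}(\{p_i\})$, and because the points $\{p_i : m_i > 0\}$ lie in pairwise disjoint path-connected components $W_i$, they are distinct, so summing yields $\mathbf{a} \leq \tilde{\mu}$. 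Maximality of the absolutely continuous part then gives $\mathbf{a} \leq \tilde{\mu}^{ac}_{\mu^*}$.

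The reverse inequality $\tilde{\mu}^{ac}_{\mu^*} \leq \mathbf{a}$ is the main obstacle, and I would split it according to the Lebesgue decomposition of $\mu^*$ into its atomic and continuous parts. If $p$ is an atom of $\mu^*$, applying Proposition \ref{prop: main_prop} at $x = p$ forces $\tilde{\mu}(\{p\})\delta_p \leq \tilde{\mu}^{ac}_{\bar{\mu}_p} = m_p \delta_{p_p}$ (using $\bar{\mu}_p(\{p\}) = \mu^*(\{p\}) > 0$), yielding either $\tilde{\mu}(\{p\}) = m_p = 0$, whence $\mathbf{a}(\{p\}) = 0$ by Lemma \ref{lemma: same_component}, or $p = p_p$ with $m_p = \tilde{\mu}(\{p\})$; either way $(\tilde{\mu}^{ac}_{\mu^*} - \mathbf{a})(\{p\}) = 0$. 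Hence the excess measure $\lambda := \tilde{\mu}^{ac}_{\mu^*} - \mathbf{a} \geq 0$ is atomless while still satisfying $\lambda \ll \mu^*$ and $\lambda \leq \tilde{\mu}$, and it remains to show $\lambda \equiv 0$.

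For this final step I write $\lambda = g \cdot \mu^*$ with $g \geq 0$, and for each $x \in W$ with $\varphi(x) > 0$, I consider the truncation $\lambda_x := g \cdot \bar{\mu}_x$. Since $\bar{\mu}_x \leq \mu^*$, we have $\lambda_x \leq \lambda \leq \tilde{\mu}$ and $\lambda_x \ll \bar{\mu}_x$; Proposition \ref{prop: main_prop}(a) together with maximality forces $\lambda_x \leq \tilde{\mu}^{ac}_{\bar{\mu}_x} = m_x \delta_{p_x}$. But $\lambda$ is atomless and $\lambda_x \leq \lambda$, so $\lambda_x(\{p_x\}) \leq \lambda(\{p_x\}) = 0$, giving $\lambda_x \equiv 0$. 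In other words, $\int_{\Gamma(x)} g(\gamma(0)) \, d\eta(\gamma) = 0$ for $\mathcal{H}^1$-a.e.\ $x \in W$. A Fubini interchange then yields
\[
0 = \int_W \int_{\Gamma(x)} g(\gamma(0)) \, d\eta(\gamma) \, d\mathcal{H}^1(x) = \int_\Gamma g(\gamma(0)) \, \mathcal{H}^1(Im(\gamma)) \, d\eta(\gamma),
\]
and since $\mathcal{H}^1(Im(\gamma)) > 0$ for $\eta$-a.e.\ non-constant $\gamma$, we get $g \circ p_0 = 0$ $\eta$-a.e., hence $\lambda = (p_0)_\#((g \circ p_0)\eta) = 0$. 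This closes the reverse inequality and establishes the lemma.
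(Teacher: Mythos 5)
Your strategy (prove that $\mathbf{a}$ \emph{is} the absolutely continuous part of $\tilde{\mu}$ with respect to $\mu^*$, splitting off the atoms of $\mu^*$ and killing the atomless remainder by a Fubini argument over the good decomposition) is genuinely different from the paper's proof, which shows directly that $\tilde{\mu}-\mathbf{a}\perp\bar{\mu}_x$ for each $x$ and then concentrates $\mu^*$ on a $(\tilde{\mu}-\mathbf{a})$-null set by choosing countably many points $x_k$ through which $\eta$-a.e.\ curve passes. Your first step ($\mathbf{a}\leq$ a.c.\ part) and your final Fubini step are essentially sound. However, there is a genuine gap in the atom-handling step. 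You apply Proposition \ref{prop: main_prop} ``at $x=p$'' for an atom $p$ of $\mu^*$ and then treat the resulting $m_p\delta_{p_p}$ as if it were automatically recorded in $\mathbf{a}$. First, the proposition is stated for $x\in W$ with $\varphi(x)=\eta(\Gamma(x))>0$, and an atom of $\mu^*$ is a boundary point at which $\varphi$ (a multiplicity defined only $\mathcal{H}^1$-a.e.) is not controlled; this part is repairable, since the proof of the proposition only uses $\eta(\Gamma(x))>0$ and $\eta(\Gamma(p))\geq\mu^*(\{p\})>0$, but it must be said. Second, and more seriously, in the branch $\tilde{\mu}(\{p\})>0$ your conclusion $(\tilde{\mu}^{ac}_{\mu^*}-\mathbf{a})(\{p\})=0$ requires $\mathbf{a}(\{p\})\geq\tilde{\mu}(\{p\})$. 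But $\mathbf{a}$ is assembled in (\ref{eqn: measures_ab}) only from points $x$ lying in the non-degenerate $T^*$-path-connected components $W_i$, together with Lemma \ref{lemma: same_component}; nothing in your argument shows that $p$ belongs to such a component, nor that the pair $(p_p,m_p)$ extracted at $x=p$ coincides with some $m_i\delta_{p_i}$ actually entering $\mathbf{a}$. A priori $\mathbf{a}(\{p\})$ could vanish even though $m_p>0$, and then your excess measure $\lambda$ would have an atom at $p$, destroying the atomlessness on which the whole Fubini finale rests.

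The gap can be closed, but it needs an additional argument of the following type. Since $\eta$ is a good decomposition, the mass identity gives $\varphi\,\mathcal{H}^1\mres W=\int_\Gamma \mathcal{H}^1\mres \im(\gamma)\,d\eta(\gamma)$, so $\eta$-a.e.\ curve lies $\mathcal{H}^1$-a.e.\ in $W$ and has positive length. Applying Fubini to the family $\{\gamma:\gamma(0)=p\}$, which has $\eta$-mass $\mu^*(\{p\})>0$, produces a set of positive $\mathcal{H}^1$-measure of points $x\in W$ with $\varphi(x)=\eta(\Gamma(x))>0$ and $\bar{\mu}_x(\{p\})=\eta(\{\gamma\in\Gamma(x):\gamma(0)=p\})>0$. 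Any such $x$ lies in a non-degenerate component $W_i$ (Lemma \ref{lemma: path_connected}), and your maximality argument applied at this $x$ forces $p_x=p$ and $m_x=\tilde{\mu}(\{p\})$, hence $\mathbf{a}(\{p\})\geq m_i=\tilde{\mu}(\{p\})$ as needed. Note that the same good-decomposition identity is also silently used in your last display, where Fubini really yields $\int_\Gamma g(\gamma(0))\,\mathcal{H}^1(\im(\gamma)\cap W)\,d\eta(\gamma)$ and one must know this equals $\int_\Gamma g(\gamma(0))\,\mathcal{H}^1(\im(\gamma))\,d\eta(\gamma)$ (or at least that the intersection has positive length $\eta$-a.e.). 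With these repairs your proof becomes a correct alternative to the paper's.
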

\begin{proof}
Let $\hat{\mu}=\tilde{\mu}-\mathbf{a}$. Then for each $x\in W$ with $\varphi(x)>0$, by Proposition \ref{prop: main_prop},
\[\hat{\mu}\perp \bar{\mu}_x. \]
Thus, there exists a $\hat{\mu}$-negligible set $A_x$ such that $\bar{\mu}_x(A_x)=\bar{\mu}_x(X)$.
Observe that one may pick countably many points $\{x_k: \varphi(x_k)>0\}_{k=1}^\infty \subset W$ so that for $\eta$-a.e. $\gamma\in \Gamma$, $\gamma$ passes at least one of these points. One way to select these points is taking a countable dense subset of the 1-rectifiable set $W_+:=\{x\in W: \varphi(x)>0\}$. Now for each $k$,
\[\eta\left(\Gamma(x_k) \right)=\bar{\mu}_{x_k}(X)=\bar{\mu}_{x_k}(A_{x_k})=\eta\left(\{\gamma: \gamma(0)\in A_k\} \right),\]
and thus
\[\eta(\Gamma)=\eta\left(\bigcup_k \Gamma(x_k) \right)=\eta\left(\{\gamma\in \Gamma: \gamma(0)\in \bigcup_k A_{x_k}\}\right).\]
As a result,
\[\mu^*(X)=\mu^*(\bigcup_k A_{x_k}) \text{ and } \hat{\mu}(\bigcup_k A_{x_k})=\sum_k \hat{\mu}(A_{x_k})=0.\]
Therefore, $\mu^*\perp \hat{\mu}$ as desired. For each $i\in J$, assume $m_i\delta_{p_i}=m_x\delta_{p_x}$, then 
\[m_i\delta_{p_i}\ll \bar{\mu}_x\ll \mu^*.\]
Thus, $\mathbf{a}\ll \mu^*$. Similarly, we have $\left(\tilde{\nu}-\mathbf{b}\right) \perp \nu^*$ and $\mathbf{b}\ll \nu^*$.
\end{proof}

\begin{lemma}
\label{lemma: restrict}
For any two (positive) measures $\mu_1$ and $\mu_2$. Let $\lambda=\mu_1+\mu_2$. If $\mu_1\perp \mu_2$, then there exists a $\lambda$-measurable set $A$ such that $\mu_1=\lambda\mres A$ and $\mu_2=\lambda \mres (X\setminus A)$. 
\end{lemma}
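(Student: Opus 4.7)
The plan is to let the set $A$ be the one provided by the very definition of mutual singularity, and then verify the two restriction identities by a direct computation. Recall that $\mu_1 \perp \mu_2$ means by definition that there exists a measurable set $A$ with $\mu_1(X\setminus A)=0$ and $\mu_2(A)=0$. Since both $\mu_1$ and $\mu_2$ are absolutely continuous with respect to $\lambda=\mu_1+\mu_2$, such a set is automatically $\lambda$-measurable, so this $A$ is the candidate I would use (if needed, I would first replace $A$ by $A\cap B^c$, where $B$ is a set carrying $\mu_2$ and disjoint from a set carrying $\mu_1$, to guarantee that $A$ and its complement are the two carriers).

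To verify the two identities, I would take any $\lambda$-measurable set $E$ and expand
\[(\lambda\mres A)(E)=\lambda(E\cap A)=\mu_1(E\cap A)+\mu_2(E\cap A).\]
The second term vanishes because $E\cap A\subseteq A$ and $\mu_2(A)=0$, while the first equals $\mu_1(E)$ because $\mu_1(E\setminus A)\leq \mu_1(X\setminus A)=0$. Hence $\lambda\mres A=\mu_1$, and the symmetric computation with the roles of $\mu_1$ and $\mu_2$ swapped gives $\lambda\mres (X\setminus A)=\mu_2$. There is no genuine obstacle here, as the lemma is essentially a rephrasing of the standard definition of mutually singular measures; the only minor point to be careful about is measurability of $A$, which is handled automatically by the absolute continuity $\mu_i \ll \lambda$.
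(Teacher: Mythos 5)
Your proof is correct, but it takes a different route from the paper. You work directly from the definition of mutual singularity: take a carrier set $A$ with $\mu_1(X\setminus A)=0$ and $\mu_2(A)=0$, and verify $\lambda\mres A=\mu_1$, $\lambda\mres(X\setminus A)=\mu_2$ by expanding $\lambda(E\cap A)=\mu_1(E\cap A)+\mu_2(E\cap A)$; this is completely elementary and needs no further tools. The paper instead invokes the Radon--Nikod\'ym theorem: since $\mu_i\ll\lambda$, write $\mu_1=f\lambda$ and $\mu_2=(1-f)\lambda$ with $0\le f\le 1$, show that singularity forces $\lambda(\{0<f<1\})=0$, and set $A=\{f=1\}$. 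Your argument is shorter and avoids the Radon--Nikod\'ym machinery altogether; it also sidesteps a step in the paper that is stated rather tersely (the claim that $\mu_1(K)>0$ and $\mu_2(K)>0$ alone contradict $\mu_1\perp\mu_2$ really requires intersecting $K$ with the carrier sets, since mutually singular measures can both charge a common set), whereas your computation never meets that issue. What the paper's approach buys is that $A$ arises canonically as a level set of the density, in the same Lebesgue--Radon--Nikod\'ym language used in Proposition \ref{prop: main_prop} and Lemma \ref{lemma: perp}, so the lemma reads as part of that framework. One small remark on your write-up: the $\lambda$-measurability of $A$ follows simply because the set furnished by the definition of $\mu_1\perp\mu_2$ is measurable in the underlying $\sigma$-algebra (and $\lambda$-measurable sets contain that $\sigma$-algebra); the appeal to $\mu_i\ll\lambda$ is not what does the work there, and the parenthetical replacement of $A$ by $A\cap B^c$ is unnecessary.
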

\begin{proof}
Since $\lambda=\mu_1+\mu_2$, $\mu_1\ll \lambda$ and $\mu_2\ll \lambda$. By the Radon-Nikod\'ym theorem, there exists a non-negative $\lambda$-measurable function $f$ such that $\mu_1=f\lambda$ and $\mu_2=(1-f)\lambda$. Given $\mu_2$ is a positive measure, it follows that $0\le f(x)\le 1$ for $\lambda$-a.e. $x$.  Let $K=\{x: 0<f(x)<1\}$. We claim that $\lambda(K)=0$. Indeed, assume $\lambda(K)>0$. Then, $\mu_1(K)=\int_Kf(x)d\lambda(x)>0$ and similarly  $\mu_2(K)=\int_K(1-f(x))d\lambda(x)>0$. This contradicts $\mu_1\perp \mu_2$. Therefore, $\lambda(K)=0$.
Setting $A=\{x: f(x)=1\}$ yields $\mu_1=\lambda\mres A$ and $\mu_2=\lambda \mres (X\setminus A)$. 
\end{proof}
Combining the preceding results leads to the following theorem, which is a generalized version of Theorem \ref{thm: atomic}.
\begin{theorem}
\label{main} Suppose $\mu $ and $\nu $ are two Radon measures on
$X$, $0<\alpha <1$, and $T^{\ast }=\underline{\underline{\tau}}(W,\varphi,\zeta)\in Path(\mu ^{\ast },\nu ^{\ast })$ is a
solution to the ROTPB($\mu ,\nu $) problem. Then
\begin{enumerate}
    \item There exist two atomic measures $\mathbf{a}$ and $\mathbf{b}$, a $\mu$-measurable set $A$, and a $\nu$-measurable set $B$ such that
\[\mu\mres A=\mu^*+\mathbf{a} \text{ and } \nu\mres B=\nu^*+\mathbf{b}. \]
\item Let $\{W_i: i\in J\}$ be the collection of all non-degenerate $T^*$-path-connected components of $W$. Then, for each $i\in J$, there exist $m_i\ge 0 \text{ and } n_i\ge 0$ with either $m_i=0$ or $n_i=0$; and two points $ p_i, q_i\in W_i$ such that 
\[\mathbf{a}=\sum_{i\in J}m_i\delta_{p_i} \text{ and } \mathbf{b}=\sum_{i\in J}n_i\delta_{q_i}. \]
\end{enumerate}
\end{theorem}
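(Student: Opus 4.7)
The construction of the atomic measures $\mathbf{a}$ and $\mathbf{b}$ needed for part (2) has essentially been carried out already in equation (\ref{eqn: measures_ab}) and the discussion preceding the theorem. The plan is therefore to first observe that this construction yields well-defined measures indexed by the non-degenerate $T^{*}$-path-connected components, and then to derive part (1) by combining Lemma \ref{lemma: perp} and Lemma \ref{lemma: restrict}.

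First, I would verify part (2). By Proposition \ref{prop: main_prop}, for every $x\in W$ with $\varphi(x)>0$ we obtain Dirac-type absolutely continuous parts $\tilde{\mu}^{ac}_x=m_x\delta_{p_x}$ and $\tilde{\nu}^{ac}_x=n_x\delta_{q_x}$, with at least one of $m_x$ and $n_x$ equal to zero. Part (b) of that proposition shows that whenever $m_x>0$, the associated source point $p_x$ is joined to $x$ by a Lipschitz curve along which $\varphi$ is bounded below by a positive constant, so $p_x$ lies in the same non-degenerate $T^{*}$-path-connected component $W_i$ as $x$. Lemma \ref{lemma: same_component} then guarantees that the pair $(m_x,p_x)$ is independent of the choice of $x\in W_i$ on which $m_x>0$, so the definitions $m_i\delta_{p_i}:=m_x\delta_{p_x}$ and $n_i\delta_{q_i}:=n_x\delta_{q_x}$ are unambiguous. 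This gives precisely the atomic measures displayed in (\ref{eqn: measures_ab}), with either $m_i=0$ or $n_i=0$ for each $i\in J$ by part (d) of Proposition \ref{prop: main_prop} together with Lemma \ref{lemma: same_component}.

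Next, for part (1), I would exploit the decomposition
\[
\mu=\mu^{*}+\tilde{\mu}=(\mu^{*}+\mathbf{a})+(\tilde{\mu}-\mathbf{a}).
\]
Lemma \ref{lemma: perp} gives $\mathbf{a}\ll \mu^{*}$ and $(\tilde{\mu}-\mathbf{a})\perp \mu^{*}$, so $\mu^{*}+\mathbf{a}\ll\mu^{*}$ and hence $(\mu^{*}+\mathbf{a})\perp(\tilde{\mu}-\mathbf{a})$; both summands are positive measures since $\mathbf{a}\leq \tilde{\mu}$ (which follows from $\tilde{\mu}^{ac}_x=m_x\delta_{p_x}\leq \tilde{\mu}$ together with the singularity of $\tilde{\mu}-\mathbf{a}$ with respect to $\mu^{*}$). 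Applying Lemma \ref{lemma: restrict} with $\mu_1=\mu^{*}+\mathbf{a}$, $\mu_2=\tilde{\mu}-\mathbf{a}$, and $\lambda=\mu$ produces a $\mu$-measurable set $A$ with $\mu\mres A=\mu^{*}+\mathbf{a}$, as required. The argument for $\nu\mres B=\nu^{*}+\mathbf{b}$ is entirely symmetric.

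The main conceptual obstacle is not in this final assembly, which is short, but in the fact that part (2) only makes sense once one has shown that the local objects $(m_x,p_x,n_x,q_x)$ are constant on each path-connected component; that is where Lemma \ref{lemma: same_component} and part (b)(c) of Proposition \ref{prop: main_prop} do the real work. Once those are in hand, the only mild subtlety here is checking positivity of $\tilde{\mu}-\mathbf{a}$ before invoking Lemma \ref{lemma: restrict}, which follows from the fact that $\mathbf{a}$ is built from the absolutely continuous (Dirac) pieces of $\tilde{\mu}$ with respect to the $\bar{\mu}_x$, each of which is dominated by $\tilde{\mu}$.
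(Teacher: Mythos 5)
Your proposal is correct and follows essentially the same route as the paper: part (2) is exactly the construction in (\ref{eqn: measures_ab}) via Proposition \ref{prop: main_prop} and Lemma \ref{lemma: same_component}, and part (1) combines Lemma \ref{lemma: perp} with Lemma \ref{lemma: restrict}. The only (harmless) difference is that you apply Lemma \ref{lemma: restrict} directly to $\lambda=\mu$ with $\mu_1=\mu^*+\mathbf{a}$, whereas the paper applies it to $\lambda=\mu-\mathbf{a}$ with $\mu_1=\mu^*$ and then restores $\mathbf{a}$ using $\mathbf{a}\ll\mu^*$; both hinge on the same facts, including $\mathbf{a}\le\tilde{\mu}$, which is cleanest to justify by noting the atoms $p_i$ lie in pairwise disjoint components and each $m_i\delta_{p_i}=\tilde{\mu}^{ac}_x\le\tilde{\mu}$.
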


The theorem says that on each path-connected component, at most one atom is not fully in use. In particular, when both $\mu$ and $\nu$ are atom-free\footnote{ \changetext{A measure $\mu$ is called atom-free if $\mu(\{p\})=0$ for every $p\in X$.}} measures, it follows that
\begin{equation*}
\mu ^{\ast }=\mu \mres{A}\text{ and } \nu
^{\ast }=\nu \mres{B}.
\end{equation*}
\begin{proof}
The atomic measures $\mathbf{a}$ and $\mathbf{b}$ are obtained by (\ref{eqn: measures_ab}). By Lemma \ref{lemma: perp}, $(\mu-\mathbf{a}-\mu^*)\perp \mu^*$. By Lemma \ref{lemma: restrict}, there exists a $(\mu-\mathbf{a})$-measurable set $A$ such that $\mu^*=(\mu-\mathbf{a})\mres A$. Since $\mu^*$ concentrates on $A$ and $\mathbf{a}\ll \mu^*$, we have $\mathbf{a}\mres A=\mathbf{a}$ and $A$ is also $\mu$-measurable. Thus, $\mu\mres A=\mu^*+\mathbf{a}$. Similarly, we have $\nu\mres B=\nu^*+\mathbf{b}$ for some $\nu$-measurable set $B$.
\end{proof}
}
In light of the theorem, on locations involving mass transportation, the
measure $\mathbf{a}$, which represents the mass left unmoved by the solution
$T^{\ast }$, must be atomic, so is measure $\mathbf{b}$ which
summarizes the distribution of excess demand at destinations. This is
because if not the planner can exploit further gains by relocating the mass
moved along the path $T^{\ast }$ due to the efficiency in group
transportation.

\section{The impact of boundary payoff}\label{sec: impact of h}

An important deviation of the ROTPB problem from the literature is the
dependence of its solution on the boundary payoff as exemplified by Proposition %
\ref{prop: Tzero}. To gain further insights, in what follows we examine the
implications of the payoff function $h$ for the problem. For the sake of
expositional tractability, we assume that $\mu$ and $\nu$ are \changetext{disjointly supported} (i.e., $spt(\mu )\cap spt(\nu )=\emptyset $)
and the function $h$ takes the form
\begin{equation}
\label{eqn: h}
h(x)=%
\begin{cases}
c_{\mu },\text{ if }x\in spt(\mu ) \\
c_{\nu },\text{ if }x\in spt(\nu )%
\end{cases}%
\end{equation}%
where $c_{\mu }$ and $c_{\nu }$ are constants. In this case, for any $T\in
Path(\tilde{\mu},\tilde{\nu})$,
\begin{equation*}
\mathbf{E}_{\alpha}^h(T)=\mathbf{M}_{\alpha }(T)-\int_{X}c_{\nu }d\tilde{\nu}%
+\int_{X}c_{\mu }d\tilde{\mu}=\mathbf{M}_{\alpha }(T)-2c||\tilde{\mu}||=%
\mathbf{M}_{\alpha }(T)-c\mathbf{M}(\partial T),
\end{equation*}%
where $c=\frac{c_{\nu }-c_{\mu }}{2}$. The corresponding ROTPB($\mu ,\nu $) problem in this case becomes: Minimize
\begin{equation}
\label{eqn: E_c}
\mathbf{E}_{\alpha}^{c}(T):=\mathbf{M}_{\alpha }(T)-c\mathbf{M}(\partial T)
\end{equation}%
among all transport paths $T$ with $\partial T\preceq \nu -\mu $. Without loss of generality, we may assume that  $c_{\nu}=2c$ and $c_\mu=0$ in equation (\ref{eqn: h}).

For each $c$, by Theorem \ref{thm: existence}, the ROTPB($\mu ,\nu $) problem has a solution $%
T_{c}^{\ast }$ that minimizes $\mathbf{E}_{\alpha}^c$. When $c\leq 0$, by Proposition
\ref{prop: Tzero}, the problem has a unique solution $T_{c}^{\ast }=0$. Thus, in the following context, we only need to investigate $T_{c}^{\ast }$ for $c>0$.

\begin{proposition}\label{prop: upper_bound_T}
Suppose $\mu$ and $\nu$ are two \changetext{disjointly supported} measures on $X$ of equal mass, and $T_{c}^{\ast }\in Path(\mu _{c}^{\ast
},\nu _{c}^{\ast })$ is a solution to the
ROTPB($\mu ,\nu $) problem associated with $c>0$. Then, for any transport path $T\in Path(\mu, \nu)$,
\begin{equation*}
\mathbf{M}_{\alpha}(T)-\mathbf{M}_{\alpha}(T^*_c)\ge c(\mathbf{M}(\partial
T)-\mathbf{M}(\partial T_c^*))\ge 0,
\end{equation*}
and hence $\mathbf{M}_{\alpha}(T^*_c) \le d_{\alpha}(\mu, \nu)$. Moreover, \changetext{ define}
\begin{equation}
\label{eqn: s}
    s(\mu,\nu):=\inf\left\{||\mu-\tilde{\mu}||+||\nu-\tilde{\nu}||: \tilde{\mu}\changetext{\leq} \mu ,
\tilde{\nu}\changetext{\leq} \nu, \tilde{\nu}- \tilde{\mu}\neq \nu-\mu \right\}.
\end{equation}
\changetext{If $s(\mu,\nu)>0$} and $c>
\frac{d_{\alpha}(\mu, \nu)}{s(\mu,\nu)}$, then $T_c^*$ is an optimal transport path
in $Path(\mu, \nu)$.
\end{proposition}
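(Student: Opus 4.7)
The plan is to observe that every transport path $T \in Path(\mu,\nu)$ is itself admissible for the ROTPB$(\mu,\nu)$ problem, since $\partial T = \nu - \mu \preceq \nu - \mu$. Comparing $\mathbf{E}_{\alpha}^c(T_c^*) \le \mathbf{E}_{\alpha}^c(T)$ and rearranging according to (\ref{eqn: E_c}) gives the first inequality
\[
\mathbf{M}_\alpha(T) - \mathbf{M}_\alpha(T_c^*) \ge c\bigl(\mathbf{M}(\partial T) - \mathbf{M}(\partial T_c^*)\bigr).
\]
Since $\mu$ and $\nu$ are disjointly supported and of equal mass, the Jordan decomposition of $\nu-\mu$ yields $\mathbf{M}(\partial T) = ||\mu|| + ||\nu|| = 2||\mu||$; similarly $\mu_c^* \le \mu$ and $\nu_c^* \le \nu$ are disjointly supported with $||\mu_c^*|| = ||\nu_c^*||$, so $\mathbf{M}(\partial T_c^*) = 2||\mu_c^*|| \le 2||\mu||$. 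This proves the second inequality, and then taking the infimum over $T \in Path(\mu,\nu)$ produces $\mathbf{M}_\alpha(T_c^*) \le d_\alpha(\mu,\nu)$.

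For the final claim, I argue by contradiction: suppose $T_c^* \notin Path(\mu,\nu)$. Since $\mu_c^* \le \mu$ and $\nu_c^* \le \nu$, this forces $\nu_c^* - \mu_c^* \ne \nu - \mu$, so $(\mu_c^*, \nu_c^*)$ is an admissible pair in the definition (\ref{eqn: s}) of $s(\mu,\nu)$. Using disjoint supports together with $||\mu|| = ||\nu||$ and $||\mu_c^*|| = ||\nu_c^*||$, I compute
\[
||\mu - \mu_c^*|| + ||\nu - \nu_c^*|| = 2\bigl(||\mu|| - ||\mu_c^*||\bigr) = \mathbf{M}(\partial T) - \mathbf{M}(\partial T_c^*),
\]
so the left-hand side is at least $s(\mu,\nu)$. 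Combining this with the first displayed inequality applied to any $T \in Path(\mu,\nu)$ of finite $\mathbf{M}_\alpha$-cost, and using $\mathbf{M}_\alpha(T_c^*) \ge 0$, yields $d_\alpha(\mu,\nu) \ge c\, s(\mu,\nu)$, contradicting the hypothesis $c > d_\alpha(\mu,\nu)/s(\mu,\nu)$. Hence $T_c^* \in Path(\mu,\nu)$, and the first displayed inequality applied to any competitor $T \in Path(\mu,\nu)$ degenerates (both boundary masses equal $2||\mu||$) to $\mathbf{M}_\alpha(T_c^*) \le \mathbf{M}_\alpha(T)$, so $T_c^*$ is $\alpha$-optimal in $Path(\mu,\nu)$.

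The argument is essentially bookkeeping against the optimality of $T_c^*$; the one delicate point is the identity converting the total-variation gap $||\mu - \mu_c^*|| + ||\nu - \nu_c^*||$ into the boundary-mass difference $\mathbf{M}(\partial T) - \mathbf{M}(\partial T_c^*)$, which relies on all four measures being pairwise disjointly supported and on both equal-mass conditions matching the norms on the two sides. The case $d_\alpha(\mu,\nu) = \infty$ need not be treated separately, as the hypothesis on $c$ is then vacuous.
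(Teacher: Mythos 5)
Your proof is correct and follows essentially the same route as the paper: compare $\mathbf{E}_{\alpha}^{c}$-values of $T_c^*$ and any $T\in Path(\mu,\nu)$, use the disjoint supports to rewrite $\mathbf{M}(\partial T)-\mathbf{M}(\partial T_c^*)$ as $||\mu-\mu_c^*||+||\nu-\nu_c^*||\ge s(\mu,\nu)$, and contradict the choice of $c$. The only differences are cosmetic (you take an infimum over $T$ instead of invoking an optimal path, and phrase the contradiction as $T_c^*\notin Path(\mu,\nu)$ rather than non-optimality), and your explicit remarks on the boundary-mass identity and the vacuous case $d_{\alpha}(\mu,\nu)=\infty$ are fine.
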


\begin{proof}
Indeed, for any transport path $T\in Path(\mu ,\nu )$,
\begin{eqnarray*}
\mathbf{M}_{\alpha }(T)-\mathbf{M}_{\alpha }(T_{c}^{\ast }) &=&\left(\mathbf{E}_{\alpha}^{c}%
(T)+c\mathbf{M}(\partial T)\right)-(\mathbf{E}_{\alpha}^{c}(T_{c}^{\ast })+c\mathbf{M}(\partial
T_{c}^{\ast })) \\
&=&(\mathbf{E}_{\alpha}^{c}(T)-\mathbf{E}_{\alpha}^{c}(T_{c}^{\ast }))+c(\mathbf{M}(\partial T)-%
\mathbf{M}(\partial T_{c}^{\ast })) \\
&\geq &c(\mathbf{M}(\partial T)-\mathbf{M}(\partial T_{c}^{\ast }))\geq 0.
\end{eqnarray*}%
Also, when $s(\mu,\nu)>0$ and $c>\frac{d_{\alpha }(\mu ,\nu )}{s(\mu,\nu)}$, assume that $%
T_{c}^{\ast }$ is not an optimal transport path in $Path(\mu ,\nu )$. Since $T_c^*$ is a solution to the ROTPB($\mu ,\nu $) problem, it is an optimal transport path in $Path(\mu _{c}^{\ast
},\nu _{c}^{\ast })$. Thus, $\mu _{c}^{\ast
}\neq \mu$ and $\nu _{c}^{\ast }\neq \nu$.
Now, for any optimal transport path $T$ in $Path(\mu ,\nu )$, it follows that
\begin{eqnarray*}
d_{\alpha }(\mu ,\nu )&\geq& \mathbf{M}_{\alpha }(T)-\mathbf{M}_{\alpha
}(T_{c}^{\ast })\geq c(\mathbf{M}(\partial T)-\mathbf{M}(\partial
T_{c}^{\ast }))\\
&=&c(||\mu-\mu _{c}^{\ast
}||+||\nu-\nu _{c}^{\ast
}||)\geq cs(\mu,\nu),
\end{eqnarray*}%
a contradiction with the choice of $c$.
\end{proof}

Proposition \ref{prop: upper_bound_T} shows that the transportation cost $\mathbf{M}_{\alpha
}(T_{c}^{\ast })$ associated with the
solution $T_{c}^{\ast }$ is bounded from above. More interestingly, when the
parameter $c$, a measure of the profitability for relocating mass, is
sufficiently large, $T_{c}^{\ast }$ represents an optimal way of
transporting mass from $\mu $ to $\nu $. The intuition is that since the
transportation cost is bounded, a large enough $c$ would induce the planner
to move as much mass as possible. This argument can be further validated by
the following proposition, which derives an upper bound as well as the decay rate for the amount of
mass left unmoved by $T_{c}^{\ast }$.

\begin{proposition}
Suppose $||\mu ||=||\nu ||$, $c>0$, \changetext{ $1-\frac{1}{m}<\alpha<1$} and $T_{c}^{\ast }\in Path(\mu _{c}^{\ast
},\nu _{c}^{\ast })$ denotes the solution to the ROTPB($\mu ,\nu $) problem.
Then
\begin{equation}
||\mu -\mu _{c}^{\ast }||=||\nu -\nu _{c}^{\ast }||\leq \left( \frac{C_{m,\alpha}  diam(X)}{2c}%
\right) ^{\frac{1}{1-\alpha }},  \label{eqn: a-a_star}
\end{equation}
where $C_{m,\alpha}$ is the constant given in (\ref{eqn: C_m_alpha}).
\end{proposition}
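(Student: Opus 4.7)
The plan is to compare $T_c^\ast$ against the competitor $T_c^\ast + S$, where $S$ is an $\alpha$-optimal transport path from $\tilde{\mu}:=\mu-\mu_c^\ast$ to $\tilde{\nu}:=\nu-\nu_c^\ast$. Note $||\tilde{\mu}||=||\tilde{\nu}||$ because $||\mu||=||\nu||$ and $||\mu_c^\ast||=||\nu_c^\ast||$; since $1-\tfrac{1}{m}<\alpha<1$, such an $S$ exists with finite $\mathbf{M}_\alpha$-cost by the discussion surrounding \eqref{eqn: d_alpha_upperbound}. Clearly $T_c^\ast+S \in Path(\mu,\nu)$, and in particular $\partial(T_c^\ast + S) = \nu-\mu \preceq \nu-\mu$, so it is admissible for the ROTPB($\mu,\nu$) problem.

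Next I would exploit the fact that $\mu$ and $\nu$ are disjointly supported: the inclusions $\mu_c^\ast \leq \mu$ and $\nu_c^\ast \leq \nu$ force $\mu_c^\ast$ and $\nu_c^\ast$ to be disjointly supported as well, so
\[
\mathbf{M}(\partial T_c^\ast)=||\mu_c^\ast||+||\nu_c^\ast||=2||\mu_c^\ast||, \qquad \mathbf{M}(\partial(T_c^\ast+S))=||\mu||+||\nu||=2||\mu||.
\]
Since $T_c^\ast$ minimizes $\mathbf{E}_\alpha^c$ on admissible currents,
\[
\mathbf{M}_\alpha(T_c^\ast)-2c||\mu_c^\ast||\leq \mathbf{M}_\alpha(T_c^\ast+S)-2c||\mu||\leq \mathbf{M}_\alpha(T_c^\ast)+\mathbf{M}_\alpha(S)-2c||\mu||,
\]
where the second inequality uses the pointwise subadditivity $(a+b)^\alpha\leq a^\alpha+b^\alpha$ applied to the multiplicities of $T_c^\ast$ and $S$ on their (rectifiable) overlap. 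Rearranging yields the key inequality
\[
2c\bigl(||\mu||-||\mu_c^\ast||\bigr)=2c\,||\tilde{\mu}||\leq \mathbf{M}_\alpha(S)=d_\alpha(\tilde{\mu},\tilde{\nu}).
\]

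Finally, I would invoke the upper bound \eqref{eqn: d_alpha_upperbound}, which gives $d_\alpha(\tilde{\mu},\tilde{\nu})\leq C_{m,\alpha}\operatorname{diam}(X)\,||\tilde{\mu}||^\alpha$. Combining,
\[
2c\,||\tilde{\mu}||\leq C_{m,\alpha}\operatorname{diam}(X)\,||\tilde{\mu}||^\alpha,
\]
which, assuming $||\tilde{\mu}||>0$ (otherwise the conclusion is trivial), yields $||\tilde{\mu}||^{1-\alpha}\leq \tfrac{C_{m,\alpha}\operatorname{diam}(X)}{2c}$ and hence \eqref{eqn: a-a_star}. The only subtle point is to ensure the existence of a finite-cost $\alpha$-optimal competitor $S$ between $\tilde{\mu}$ and $\tilde{\nu}$, which is precisely why the hypothesis $1-\tfrac{1}{m}<\alpha<1$ is imposed; modulo that, everything is a one-shot comparison argument followed by the standard $d_\alpha$ upper bound.
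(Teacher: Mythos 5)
Your proposal is correct and follows essentially the same argument as the paper: compare $T_c^\ast$ with $T_c^\ast+\tilde T$, where $\tilde T$ is an $\alpha$-optimal path from $\mu-\mu_c^\ast$ to $\nu-\nu_c^\ast$, use minimality of $T_c^\ast$ for $\mathbf{E}_\alpha^c$ together with subadditivity of $\mathbf{M}_\alpha$, and then the bound (\ref{eqn: d_alpha_upperbound}). Your explicit remarks on the disjoint supports and on why $1-\frac{1}{m}<\alpha<1$ guarantees a finite-cost competitor are points the paper leaves implicit, but the route is the same.
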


\begin{proof}
Let $\tilde{T}\in Path(\mu -\mu _c^*, \nu-\nu_c^*)$ be an $\alpha$-optimal
transport path, and denote $T=T_c^*+\tilde{T}\in Path(\mu, \nu)$. By (\ref{eqn: d_alpha_upperbound}),
\begin{eqnarray*}
0&\le & \mathbf{E}_{\alpha}^{c}(T)- \mathbf{E}_{\alpha}^{c}(T_c^*) \\
&=&\left(\mathbf{M}_{\alpha}(T)-c \mathbf{M}(\partial T)\right)-\left(%
\mathbf{M}_{\alpha}(T_{c}^*)-c \mathbf{M}(\partial T_c^*)\right) \\
&=&\left(\mathbf{M}_{\alpha}(T)-\mathbf{M}_{\alpha}(T_{c}^*)\right)-c\left(%
\mathbf{M}(\partial T)- \mathbf{M}(\partial T_c^*)\right) \\
&\le & \mathbf{M}_{\alpha}(\tilde{T})-c(||\mu-\mu_c^*||+||\nu-\nu_c^*||) \\
&\le&C_{m,\alpha}  diam(X) ||\mu-\mu_c^*||^{\alpha}-2c||\mu-\mu_c^*||,
\end{eqnarray*}
which leads to inequality (\ref{eqn: a-a_star}).
\end{proof}

The next proposition characterizes the monotonicity properties of the
solution. Intuitively, as $c$ rises, the planner tends to move more mass
between sources and destinations, resulting in larger transportation costs.

\begin{proposition}
\label{prop: increasing}
Suppose $||\mu ||=||\nu ||$, $c>0$, \changetext{ $1-\frac{1}{m}<\alpha<1$} and $T_{c}^{\ast }\in Path(\mu _{c}^{\ast
},\nu _{c}^{\ast })$ denotes the solution to the ROTPB($\mu ,\nu $) problem. Then, as a function of $c\in \mathbb{R}$,

\begin{enumerate}
\item $\mathbf{E}_{\alpha}^{c}(T^*_c)$ is decreasing;

\item $\mathbf{M}_{\alpha}(T_{c}^*)$ is increasing with $\lim_{c\rightarrow
\infty}\mathbf{M}_{\alpha}(T_{c}^*)=d_{\alpha}(\mu, \nu)$;

\item $\mathbf{M}(\partial T_{c}^*)$ is increasing with $\lim_{c\rightarrow
\infty}\partial T_{c}^*=\nu - \mu$.
\end{enumerate}
\end{proposition}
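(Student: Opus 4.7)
The plan is to exploit the fact that, for any two parameters $c_1<c_2$, the minimizers $T_{c_1}^*$ and $T_{c_2}^*$ are mutual competitors, because the feasibility constraint $\partial T\preceq \nu-\mu$ does not depend on $c$. This gives the pair of optimality comparisons
\begin{align*}
\mathbf{M}_{\alpha}(T_{c_1}^*)-c_1\mathbf{M}(\partial T_{c_1}^*) &\le \mathbf{M}_{\alpha}(T_{c_2}^*)-c_1\mathbf{M}(\partial T_{c_2}^*),\\
\mathbf{M}_{\alpha}(T_{c_2}^*)-c_2\mathbf{M}(\partial T_{c_2}^*) &\le \mathbf{M}_{\alpha}(T_{c_1}^*)-c_2\mathbf{M}(\partial T_{c_1}^*).
\end{align*}
Adding the two inequalities yields $(c_2-c_1)\bigl(\mathbf{M}(\partial T_{c_2}^*)-\mathbf{M}(\partial T_{c_1}^*)\bigr)\ge 0$, which proves the monotonicity in (3); feeding this back into the first inequality gives $c_1\bigl(\mathbf{M}(\partial T_{c_2}^*)-\mathbf{M}(\partial T_{c_1}^*)\bigr)\le \mathbf{M}_{\alpha}(T_{c_2}^*)-\mathbf{M}_{\alpha}(T_{c_1}^*)$, so the monotonicity in (2) follows since $c_1>0$. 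For (1), I would rewrite the second inequality as
\[
\mathbf{E}_{\alpha}^{c_2}(T_{c_2}^*)\;\le\; \mathbf{E}_{\alpha}^{c_1}(T_{c_1}^*)-(c_2-c_1)\mathbf{M}(\partial T_{c_1}^*)\;\le\;\mathbf{E}_{\alpha}^{c_1}(T_{c_1}^*),
\]
giving the decrease in $c$.

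For the limits as $c\to\infty$, I would fix an $\alpha$-optimal transport path $T\in Path(\mu,\nu)$, which exists with finite cost since $1-\tfrac{1}{m}<\alpha<1$, and has $\mathbf{M}(\partial T)=2\|\mu\|$ and $\mathbf{M}_{\alpha}(T)=d_{\alpha}(\mu,\nu)$. Testing the optimality of $T_c^*$ against this $T$ produces
\[
\mathbf{M}_{\alpha}(T_c^*)-c\mathbf{M}(\partial T_c^*)\;\le\; d_{\alpha}(\mu,\nu)-2c\|\mu\|,
\]
which, together with the bound $\mathbf{M}(\partial T_c^*)\le 2\|\mu\|$ coming from $\partial T_c^*\preceq \nu-\mu$, rearranges to
\[
0\;\le\; 2\|\mu\|-\mathbf{M}(\partial T_c^*)\;\le\; \frac{d_{\alpha}(\mu,\nu)-\mathbf{M}_{\alpha}(T_c^*)}{c}\;\le\;\frac{d_{\alpha}(\mu,\nu)}{c}\xrightarrow[c\to\infty]{}0.
\]
Hence $\|\mu-\mu_c^*\|=\|\nu-\nu_c^*\|=\|\mu\|-\|\mu_c^*\|\to 0$, and because $\mu-\mu_c^*$ and $\nu-\nu_c^*$ are (nonnegative) positive measures, this is total-variation convergence, which establishes $\partial T_c^*\to \nu-\mu$ claimed in (3).

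It remains to show $\mathbf{M}_{\alpha}(T_c^*)\to d_{\alpha}(\mu,\nu)$. The upper bound $\mathbf{M}_{\alpha}(T_c^*)\le d_{\alpha}(\mu,\nu)$ is already recorded in Proposition \ref{prop: upper_bound_T}. For the matching lower bound I would repeat the compactness scheme used in the proof of Theorem \ref{thm: existence}: Lemma \ref{lemma: mass_bound} applied to the $\alpha$-optimal $T_c^*$ bounds $\mathbf{M}(T_c^*)$ in terms of $\mathbf{M}(\partial T_c^*)$ and $\mathbf{M}_{\alpha}(T_c^*)$, both of which are equibounded, so by normal-current compactness a subsequence $T_{c_k}^*$ converges in flat norm to a normal $1$-current $T^{\infty}$ with $\partial T^{\infty}=\nu-\mu$. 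The rectifiability theorem makes $T^{\infty}$ rectifiable, so $T^{\infty}\in Path(\mu,\nu)$; lower semi-continuity of $\mathbf{M}_{\alpha}$ then gives
\[
d_{\alpha}(\mu,\nu)\;\le\;\mathbf{M}_{\alpha}(T^{\infty})\;\le\;\liminf_{k\to\infty}\mathbf{M}_{\alpha}(T_{c_k}^*)\;\le\; d_{\alpha}(\mu,\nu),
\]
closing the argument.

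\textbf{Main obstacle.} The monotonicity parts reduce to two-line manipulations of the optimality inequalities. The only technical step is the compactness/lower-semicontinuity passage for the $\mathbf{M}_{\alpha}$ limit; but this essentially replays the existence proof once one observes that the optimality comparison against $T\in Path(\mu,\nu)$ forces $\mathbf{M}(\partial T_c^*)\to 2\|\mu\|$, so no new difficulty arises.
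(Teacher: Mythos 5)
Your proof is correct, and the monotonicity parts (1)--(3) are essentially identical to the paper's argument: the same two optimality comparisons between $T_{c_1}^*$ and $T_{c_2}^*$, added and rearranged. Where you diverge is in the two limits. For $\partial T_c^*\to\nu-\mu$ you test $T_c^*$ directly against a fixed finite-cost $T\in Path(\mu,\nu)$ and get the decay $0\le 2\|\mu\|-\mathbf{M}(\partial T_c^*)\le d_\alpha(\mu,\nu)/c$, whereas the paper simply quotes the quantitative bound (\ref{eqn: a-a_star}) established in the preceding proposition (itself obtained by comparing with $T_c^*+\tilde T$, $\tilde T$ an optimal path between the leftover measures); both give total-variation convergence, yours with a slightly weaker rate but fully self-contained. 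For $\lim_c\mathbf{M}_\alpha(T_c^*)=d_\alpha(\mu,\nu)$ the paper stays at the level of the metric $d_\alpha$: it writes $d_\alpha(\mu,\nu)-d_\alpha(\mu_c^*,\nu_c^*)\le d_\alpha(\mu,\mu_c^*)+d_\alpha(\nu,\nu_c^*)$ and kills the right-hand side with the bound (\ref{eqn: d_alpha_upperbound}) and (\ref{eqn: a-a_star}), which is shorter; you instead rerun the compactness scheme of Theorem \ref{thm: existence} (Lemma \ref{lemma: mass_bound}, normal-current compactness, rectifiability, lower semicontinuity of $\mathbf{M}_\alpha$). Your route is heavier machinery but avoids the triangle-inequality manipulation with $d_\alpha$ on the leftover measures, and it has the side benefit of already producing the flat-convergent subsequence and the identity $\partial T^\infty=\nu-\mu$ that Theorem \ref{thm: approximation} takes as input. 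One small point worth making explicit: the subsequential liminf argument yields the full limit only because $\mathbf{M}_\alpha(T_c^*)$ is monotone and bounded above by $d_\alpha(\mu,\nu)$, so its limit exists and must coincide with the value $d_\alpha(\mu,\nu)$ forced along the subsequence; you use this implicitly and it is covered by your part (2) together with Proposition \ref{prop: upper_bound_T}.
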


\begin{proof}
Indeed, for any $c_1 < c_2$,
\begin{equation*}
\mathbf{E}_{\alpha}^{c_1}(T_{c_1}^*)=\mathbf{M}_{\alpha}(T_{c_1}^*)-c_1 \mathbf{M}%
(\partial T_{c_1}^*)\ge \mathbf{M}_{\alpha}(T_{c_1}^*)-c_2 \mathbf{M}%
(\partial T_{c_1}^*)=\mathbf{E}_{\alpha}^{c_2}(T_{c_1}^*)\ge \mathbf{E}_{\alpha}^{c_2}(T_{c_2}^*).
\end{equation*}

Also, the inequalities $\mathbf{E}_{\alpha}^{c_1}(T_{c_{1}}^{\ast })\leq \mathbf{E}_{\alpha}^{c_1}(T_{c_{2}}^{\ast })$ and $\mathbf{E}_{\alpha}^{c_2}(T_{c_{2}}^{\ast })\leq
\mathbf{E}_{\alpha}^{c_2}(T_{c_{1}}^{\ast })$ imply that
\begin{eqnarray*}
&&\mathbf{M}_{\alpha }(T_{c_{1}}^{\ast })-c_{1}\mathbf{M}(\partial
T_{c_{1}}^{\ast })\leq \mathbf{M}_{\alpha }(T_{c_{2}}^{\ast })-c_{1}\mathbf{M%
}(\partial T_{c_{2}}^{\ast }) \\
&&\mathbf{M}_{\alpha }(T_{c_{2}}^{\ast })-c_{2}\mathbf{M}(\partial
T_{c_{2}}^{\ast })\leq \mathbf{M}_{\alpha }(T_{c_{1}}^{\ast })-c_{2}\mathbf{M%
}(\partial T_{c_{1}}^{\ast }).
\end{eqnarray*}%
Rewriting them gives
\begin{equation*}
c_{2}\left( \mathbf{M}(\partial T_{c_{2}}^{\ast })-\mathbf{M}(\partial
T_{c_{1}}^{\ast })\right) \geq \mathbf{M}_{\alpha }(T_{c_{2}}^{\ast })-%
\mathbf{M}_{\alpha }(T_{c_{1}}^{\ast })\geq c_{1}\left( \mathbf{M}(\partial
T_{c_{2}}^{\ast })-\mathbf{M}(\partial T_{c_{1}}^{\ast })\right) .
\end{equation*}%
Since $c_{1}<c_{2}$, we have $\mathbf{M}(\partial T_{c_{2}}^{\ast })\geq
\mathbf{M}(\partial T_{c_{1}}^{\ast })$ and $\mathbf{M}_{\alpha
}(T_{c_{2}}^{\ast })\geq \mathbf{M}_{\alpha }(T_{c_{1}}^{\ast })$. This
shows that both $\mathbf{M}_{\alpha }(T_{c}^{\ast })$ and $\mathbf{M}%
(\partial T_{c}^{\ast })$ are increasing functions of $c$.

Moreover, by inequality (\ref{eqn: a-a_star}), $\lim_{c\rightarrow
\infty}\partial T_{c}^*=\lim_{c\rightarrow \infty}\nu_c^*-\mu _c^*=\nu - \mu$%
. Since $d_{\alpha}$ is a distance between measures of equal mass, \changetext{by Proposition \ref{prop: upper_bound_T},}
\begin{eqnarray*}
0 & \le & d_{\alpha}(\mu, \nu)-\lim_{c\rightarrow \infty}\mathbf{M}%
_{\alpha}(T_{c}^*) \\
&=& d_{\alpha}(\mu, \nu)-\lim_{c\rightarrow \infty}d_{\alpha}(\mu _c^*,
\nu_c^*) \\
&\le & \lim_{c\rightarrow \infty}\left( d_{\alpha}(\mu , \mu
_c^*)+d_{\alpha}(\nu, \nu_c^*) \right) =0.
\end{eqnarray*}
Thus, $d_{\alpha}(\mu, \nu)=\lim_{c\rightarrow \infty}\mathbf{M}%
_{\alpha}(T_{c}^*)$.
\end{proof}

\begin{theorem}
\label{thm: approximation}
Suppose $\mu$ and $\nu$ are two \changetext{disjointly supported} measures on $X$ of equal mass, \changetext{$1-\frac{1}{m}<\alpha<1$,} and let $T_{c}^{\ast }\in Path(\mu _{c}^{\ast
},\nu _{c}^{\ast })$ denote the solution to the ROTPB($\mu ,\nu $) problem corresponding to parameter $c$.
If for some sequence $\{c_n\}$ converging to $\infty$, the associated sequence $\{T_{c_{n}}^{\ast }\}$
is subsequentially convergent to $T$ as \changetext{rectifiable normal 1-currents with respect to flat convergence}, then $T$ is
an $\alpha $-optimal transport path from $\mu $ to $\nu $.
\end{theorem}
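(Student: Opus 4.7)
The plan is to verify two facts about the limit current $T$: first, that $\partial T = \nu - \mu$, so that $T\in Path(\mu,\nu)$; and second, that $\mathbf{M}_\alpha(T) = d_\alpha(\mu,\nu)$, which together will say that $T$ is an $\alpha$-optimal transport path from $\mu$ to $\nu$.

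For the boundary identification, I would work with the convergent subsequence $\{T_{c_{n_k}}^*\}$ given by the hypothesis. Since the boundary operator is continuous under flat convergence (for any smooth compactly supported function $f$, one has $\partial T_{c_{n_k}}^*(f)=T_{c_{n_k}}^*(df)\to T(df)=\partial T(f)$), we get $\partial T_{c_{n_k}}^*\to \partial T$ in the sense of distributions. On the other hand, Proposition \ref{prop: increasing}, together with the quantitative estimate (\ref{eqn: a-a_star}), yields $\partial T_c^* \to \nu-\mu$ in the total variation norm on signed measures as $c\to\infty$. Since total variation convergence of measures implies convergence as distributions, uniqueness of distributional limits forces $\partial T = \nu-\mu$, hence $T\in Path(\mu,\nu)$.

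For optimality, I would combine two inequalities. By the lower semi-continuity of $\mathbf{M}_\alpha$ under flat convergence (already invoked in the proof of Theorem \ref{thm: existence}),
\[
\mathbf{M}_\alpha(T)\le \liminf_{k\to\infty} \mathbf{M}_\alpha(T_{c_{n_k}}^*).
\]
By Proposition \ref{prop: increasing}, $\lim_{c\to\infty} \mathbf{M}_\alpha(T_c^*) = d_\alpha(\mu,\nu)$, so the right-hand side equals $d_\alpha(\mu,\nu)$. Conversely, since $T\in Path(\mu,\nu)$, the definition of $d_\alpha$ in (\ref{d_a distance}) gives $\mathbf{M}_\alpha(T) \ge d_\alpha(\mu,\nu)$. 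Therefore $\mathbf{M}_\alpha(T) = d_\alpha(\mu,\nu)$, which together with $\partial T = \nu-\mu$ shows that $T$ is an $\alpha$-optimal transport path from $\mu$ to $\nu$.

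I do not anticipate a serious obstacle: given the groundwork laid by Propositions \ref{prop: upper_bound_T} and \ref{prop: increasing}, the argument reduces to passing to the limit in the boundary and invoking lower semi-continuity of $\mathbf{M}_\alpha$, both standard facts for this framework. The only minor subtlety is confirming that the convergence $\partial T_c^*\to \nu-\mu$ from Proposition \ref{prop: increasing} is strong enough to identify with the flat limit of the currents, which is immediate from the quantitative bound (\ref{eqn: a-a_star}).
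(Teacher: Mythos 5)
Your proposal is correct and follows essentially the same route as the paper: lower semi-continuity of $\mathbf{M}_{\alpha}$ under flat convergence together with Proposition \ref{prop: increasing} gives $\mathbf{M}_{\alpha}(T)\le d_{\alpha}(\mu,\nu)$, and then $T\in Path(\mu,\nu)$ forces equality. The only difference is that you spell out the identification $\partial T=\nu-\mu$ (flat convergence of the currents plus the total-variation convergence from (\ref{eqn: a-a_star})), a step the paper simply asserts, so your write-up is a slightly more detailed version of the same argument.
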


\begin{proof}
By the lower semi-continuity of $\mathbf{M}_{\alpha}$ and Proposition \ref{prop: increasing},
\begin{equation*}
\mathbf{M}_{\alpha}(T)\le \liminf_{n\rightarrow \infty}\mathbf{M}%
_{\alpha}(T_{c_n}^*)=d_{\alpha}(\mu, \nu).
\end{equation*}
Since $\partial T=\nu - \mu$, $T$ itself is also a transport path from $%
\mu $ to $\nu$, and it holds that $d_{\alpha}(\mu, \nu)\le \mathbf{M}%
_{\alpha}(T) $. As a result, $T$ is an optimal transport path.
\end{proof}

\begin{remark}
Theorem \ref{thm: approximation} provides a novel perspective for approximating an optimal transport path. In
light of this theorem, one can solve a sequence of ROTPB problems associated
with a monotonically increasing series of $\left\{ c_{n}\right\} $, and then
use the limit of their solutions to obtain the desired path. For small
values of $c_{n}$, the path $T_{c_{n}}^{\ast }$ is typically of simple
structure and thus relatively easy to solve. As $c_{n}$ rises, the planner
would start moving more mass through transport paths of increasing
complexity, which eventually converge to an optimal transport path from $\mu
$ to $\nu $. We leave exploration along this line to future research.
\end{remark}

\end{document}